\definecolor{gris75}{gray}{0.3}
\newtheorem{remark}{Remark}
\newtheorem{theo}{Theorem}
\newtheorem{lemma}{Lemma}
\newtheorem{assum}{Assumption}
\newcommand{\un}{{\bs 1}}
\newcommand{\eqdef}{:=}
\newcommand{\PP}[1]{{\mathbb{P}}\left\{ #1 \right\}}
\newcommand{\bs}{\boldsymbol}
\newcommand{\sW}{{\mathsf W}}
\newcommand{\sE}{{\mathsf E}}
\newcommand{\sV}{{\mathsf V}}
\newcommand{\sd}{{\mathsf d}}
\newcommand{\cL}{{\mathcal L}}
\newcommand{\cF}{{\mathcal F}}
\newcommand{\cN}{{\mathcal N}}
\newcommand{\cO}{{\mathcal O}}
\newcommand{\bP}{{\mathbb P}}
\newcommand{\bR}{{\mathbb R}}
\newcommand{\bE}{{\mathbb E}}
\newcommand{\la}{\langle}
\newcommand{\ra}{\rangle}
\newcommand{\bth}{{\bs \theta}}
\newcommand{\obth}{{\bs \theta}_{\bot}}
\newcommand{\bY}{{\bs Y}}
\newcommand{\ath}{\langle \bth\rangle}
\newcommand{\thn}{{\bs \theta}_{n}}
\newcommand{\othn}{{\bs \theta}_{\bot,n}}
\newcommand{\bthn}{\bar {\bs \theta}_{n}}
\newcommand{\thk}{{\bs \theta}_{k}}
\newcommand{\thnmu}{{\bs \theta}_{n-1}}
\newcommand{\othnmu}{{\bs \theta}_{\bot,n-1}}
\newcommand{\athn}{\langle\thn\rangle}
\newcommand{\athnmu}{\langle\thnmu\rangle}
\newcommand{\Jo}{{J_\bot}}
\newcommand{\Yon}{\bs Y_{\!\!\!\bot,n}}
\newcommand{\Yok}{\bs Y_{\!\!\!\bot,k}}
\newcommand{\bsY}{{\boldsymbol Y}}
\newcommand{\bsYn}{{\boldsymbol Y}_n}
\newcommand{\dlim}{\ensuremath{\stackrel{\mathcal{D}}{\longrightarrow}}}
\title{Performance of a Distributed Stochastic Approximation Algorithm}
\author{ Pascal Bianchi,~\IEEEmembership{Member,~IEEE}, Gersende Fort, Walid  Hachem,~\IEEEmembership{Member,~IEEE} \thanks{The authors are  with LTCI -
    CNRS/TELECOM ParisTech, 46 rue Barrault, 75634 Paris Cedex 13, France
    (e-mail: $\{$name$\}$@telecom-paristech.fr).  This work is partially
    supported by the French National Research Agency under the program ANR-07
    ROBO 002.}  }
\begin{document}

\maketitle

\begin{abstract}
 In this paper, a distributed stochastic approximation algorithm is
studied.  Applications of such algorithms include decentralized
estimation, optimization, control or computing. The algorithm consists
in two steps: a local step, where each node in a network updates a
local estimate using a stochastic approximation algorithm with
decreasing step size, and a gossip step, where a node computes a local
weighted average between its estimates and those of its
neighbors. Convergence of the estimates toward a consensus is
established under weak assumptions. The approach relies on two main
ingredients: the existence of a Lyapunov function for the mean field
in the agreement subspace, and a contraction property of the random
matrices of weights in the subspace orthogonal to the agreement
subspace.  A second order analysis of the algorithm is also performed
under the form of a Central Limit Theorem. The Polyak-averaged version
of the algorithm is also considered.
\end{abstract}

\section{Introduction}
\label{sec:intro} 
Stochastic approximation has been a very active research area for the last
sixty years (see e.g. \cite{benveniste:metivier:priouret:1987,kushner:2003}).
The pattern for a stochastic approximation algorithm is provided by the
recursion $\theta_{n} = \theta_{n-1} + \gamma_{n} Y_{n}$, where $\theta_n$ is
typically a $\bR^d$-valued sequence of parameters, $Y_n$ is a sequence of
random observations, and $\gamma_n$ is a deterministic sequence of step sizes.
An archetypal example of such algorithms is provided by stochastic gradient
algorithms. These are characterized by the fact that $Y_n = - \nabla
g(\theta_{n-1}) + \xi_n$ where $\nabla g$ is the gradient of a function $g$ 
to be minimized, and where $(\xi_n)_{n \geq 0}$ is a noise sequence corrupting 
the observations.
\\
In the traditional setting, sensing and processing capabilities needed for the
implementation of a stochastic approximation algorithm are centralized on one
machine. Alternatively, distributed versions of these algorithms where the
updates are done by a network of communicating nodes (or agents) have recently
aroused a great deal of interest. Applications include decentralized
estimation, control, optimization, and parallel computing.  

In this paper, we consider a network composed by $N$ nodes (sensors, robots,
computing units, ...). Node $i$ generates a $\bR^d$-valued stochastic process
$(\theta_{n,i})_{n\geq 1}$ through a two-step iterative algorithm:
a local and a so called gossip step. At time $n$:\\
\noindent {\tt [Local step]} Node $i$ generates  a temporary iterate 
$\tilde \theta_{n,i}$ given by 
\begin{equation}
  \label{eq:tempupdate}
\tilde \theta_{n,i}= \theta_{n-1,i} + \gamma_{n}\, Y_{n,i}\ ,
\end{equation}
where $\gamma_n$ is a deterministic positive step size and where the 
$\bR^d$-valued random process $(Y_{n,i})_{n \geq 1}$ represents the 
observations made by agent $i$. 

\noindent {\tt [Gossip step]} Node $i$ is able to observe the values 
$\tilde\theta_{n,j}$ of some other $j$'s and computes the weighted average:
\begin{equation}
  \label{eq:gossipIntro}
\theta_{n,i}=\sum_{j=1}^N w_n(i,j)\,\tilde \theta_{n,j} \ ,
\end{equation}
where the $w_n(i,j)$'s are scalar non-negative random coefficients such that
$\sum_{j=1}^N w_n(i,j)=1$ for any $i$. The sequence of random matrices $W_n
\eqdef [ w_n(i,j) ]_{i,j=1}^N$ represents the time-varying communication
network between the nodes.

{\bf Contributions.} 
This paper studies a distributed stochastic approximation algorithm
in the context of random row-stochastic gossip matrices $W_n$.
\begin{itemize}
\item Under the assumption that the algorithm is stable, we prove convergence
  of the algorithm to the sought consensus.  The unanimous convergence of the
  estimates is also established in the case where the frequency of information
  exchange between the nodes converges to zero at some controlled rate. In
  practice, this means that matrices $W_n$ become more and more likely to be
  equal to identity as $n\to\infty$. The benefits of this possibility in terms
  of power devoted to communications are obvious.
\item We provide verifiable sufficient conditions
for stability. 
\item We establish a Central Limit Theorem (CLT) on the estimates in
  the case where the $W_n$ are doubly stochastic. We show in particular that
  the node estimates tend to fluctuate synchronously for large $n$,
  \emph{i.e.}, the disagreement between the nodes is negligible at the CLT
  scale.  Interestingly, the distributed algorithm under study has the same
  asymptotic variance as its centralized analogue.
\item We also consider a CLT on the sequences averaged over time as
  introduced   in \cite{polyak:1990}. We show that averaging always improves the rate of convergence and the asymptotic variance.
\end{itemize}

\medskip

{\bf Motivations and examples}. The algorithm under study is motivated by the emergence of various decentralized network 
structures such as sensor networks, computer clouds or wireless ad-hoc networks.
One of the main application targets is distributed optimization. In this context, one seeks to minimize a
sum of some local objective differentiable functions $f_i$ of the agents: 
\begin{equation}
\label{eq:distOpt}
{\tt Minimize}\ \  \sum_{i=1}^N F_i(\theta)\ .
\end{equation}
Function $F_i$ is supposed to be unknown by any other agent $j\neq i$. 
In this context, the distributed 
algorithm~(\ref{eq:tempupdate})-(\ref{eq:gossipIntro}) would reduce to a distributed stochastic
gradient algorithm by letting
$Y_{n,i} = -\nabla_\theta F_i(\theta_{n-1,i})+\xi_{n,i}$ where $\nabla_\theta$ is the gradient w.r.t. $\theta$ and
$\xi_{n,i}$ represents some possible random perturbation $\xi_{n,i}$ at time $n$.

In a machine learning context, $F_i$ is typically the risk function of a classifier indexed by $\theta$
and evaluated based on a local training set at agent $i$ \cite{forero-cano-giannakis-jmlr11}.
In a wireless ad-hoc network, $F_i$ represents some (negative) performance measure of a transmission such as 
the Shannon capacity, and the aim is typically to search for a relevant resource allocation vector $\theta$ (see
\cite{bianchi:jakubo:2011} for more details).
As a third example, an application framework to statistical estimation is provided in Section~\ref{sec:appli}. 
In that case, it is assumed that node $i$ receives some independent and 
identically distributed (i.i.d.) time series $(X_{n,i})_n$ 
with probability density function $f_*(x)$.
The system designer considers that the density of $(X_{n,1},\cdots,X_{n,N})$ belongs 
to a parametric family $\{ f(\theta, \bs x) \}_{\theta}$ where
$ f(\theta, \bs x) = \prod_{i=1}^nf_i(\theta,x_i)$.
Then, a well-known contrast for the estimation of $\theta$ is given by the Kullback-Leibler divergence
$D( f_* \, \| \, f(\theta, \cdot) )$ \cite{lehmann-casella:point}. Finding a minimizer boils down to the minimization
of~(\ref{eq:distOpt}) by setting $F_i(\theta) = D( f_{i,*} \, \| \, f_i(\theta, \cdot) )$ where $f_{i,*}$
is the $i$th marginal of $f_{*}$. Then, algorithm~(\ref{eq:tempupdate})-(\ref{eq:gossipIntro}) coincides
with a \emph{distributed online maximum likelihood estimator} by setting 
$Y_{n,i} = -\nabla_\theta \log f_i(\theta_{n-1,i}, X_{n,i})$. Under some regularity conditions, it can be easily checked that
$Y_{n,i} = -\nabla_\theta F_i(\theta_{n-1,i})+\xi_{n,i}$ where $\xi_{n,i}$ is a martingale increment sequence.
\medskip

{\bf Position w.r.t. existing works.} 
There is a rich literature on distributed estimation and optimization 
algorithms, see 
\cite{blo-etal-cdc05},\cite{lop-sayed-asap06}, \cite{kar:2010},                
\cite{cat-sayed-sp10}, \cite{nedic:ozdaglar:parrilo:tac-2010,nedic:tac-2011},  
\cite{stank-stip-tac11} as a non exhaustive list. Among the first gossip       
algorithms are those considered in the treatise \cite{bertsekas:1997} and in   
\cite{tsitsiklis:bertsekas:athans:tac-1986}, as well as in 
\cite{kushner-siam87}, the latter reference dealing with the case of a 
constant step size. The case where the gossip matrices are    
random and the observations are noiseless is considered in \cite{boyd:2006}.   
The authors of \cite{nedic:ozdaglar:parrilo:tac-2010} solve a constrained      
optimization by also using noiseless estimates. The contributions              
\cite{cat-sayed-sp10} and \cite{stank-stip-tac11} consider the framework of    
linear regression models.                                                      

In this paper, the random gossip matrices $W_n$ are assumed to be row
stochastic, \emph{i.e.}, $W_n \un = \un$ where $\un$ is the vector whose components equal one, and column stochastic in the mean,
\emph{i.e.}, $\un^T \bE[W_n] = \un^T$. Observe that the row stochasticity
constraint $W_n \un = \un$ is local, since it simply requires that each agent
makes a weighted sum of the estimates of its neighbors with weights summing to
one. Alternatively, the column stochasticity constraint $\un^T W_n = \un^T$
which is assumed in many contributions (see \emph{e.g.}
\cite{nedic:ozdaglar:tac-2009,nedic:ozdaglar:parrilo:tac-2010,
  ram:nedic:veeravalli:jota-2010,nous:eusipco}) requires a coordination at the
network level (nodes must coordinate their weights). This constraint is not
satisfied by a large class of gossip algorithms. As an example, the well known
broadcast gossip matrices (see Section~\ref{sec:gossip}) are only column
stochastic in the mean.  As opposed to the aforementioned papers, it is worth
noting that some works such as
\cite{kushner-siam87,nedic:tac-2011,bianchi:jakubo:2011} get rid of the
column-stochasticity condition. As a matter of fact, assumption $\un^T\bE[W_n]
= \un^T$ is even relaxed in \cite{kushner-siam87}. Nevertheless, considering
for instance Problem~(\ref{eq:distOpt}), this comes at the price of
losing the convergence to the sought minima. 

In many contributions (see \emph{e.g.} \cite{kushner-siam87},
\cite{lop-sayed-asap06}, or \cite{cat-sayed-sp10}), the gossip step is
performed before the local step, contrary to what is done in this
paper. The general techniques used in this paper to establish the convergence
towards a consensus, the stability and the fluctuations of the estimates
can be adapted without major difficulty to that situation.

In \cite{ram:nedic:veeravalli:jota-2010}, projected stochastic                 
(sub)gradient algorithms are considered in the case where matrices             
$(W_n)_n$ are doubly stochastic. Such results have later been extended by      
\cite{bianchi:jakubo:2011} to the case of non convex optimization, also         
relaxing the doubly-stochastic assumption. It is worth noting that such works  
explicitly or implicitly rely on a projection step onto a compact convex     
set. In many scenarios (such as unconstrained optimization for example), the   
estimate is not naturally supposed to be confined into a known compact set.    
In that case, introducing an artificial projection step is known to modify     
the limit points of the algorithm. On the opposite, this paper addresses the   
issue of unprojected stochastic approximation algorithms. In this context,     
stability turns out to be a crucial issue which is addressed in this paper.    
Note that the stability issues are not considered in most of 
\cite{kushner-siam87}. 
Finally, unlike previous works such as \cite{ram:nedic:veeravalli:jota-2010}   
or \cite{bianchi:jakubo:2011}, we also address the issue of convergence rate   
and characterize the asymptotic fluctuations of the estimation error.          

From a methodological viewpoint, our analysis does not                         
rely on convex optimization tools such as in \emph{e.g.}                       
\cite{nedic:ozdaglar:tac-2009,nedic:ozdaglar:parrilo:tac-2010,                 
ram:nedic:veeravalli:jota-2010}) and does not rely on perturbed differential   
inclusions as in \cite{bianchi:jakubo:2011}. The almost sure convergence       
result is obtained following an approach inspired by \cite{delyon:2000}        
and \cite{andrieu:2005} (other works such as \cite{kushner-siam87} consider 
weak convergence approaches). 
The stability result is obtained by introducing a     
Lyapunov function and by jointly controlling the moments of this Lyapunov      
function and the second order moments of the disagreements between local       
estimates. 
Finally, the study of the asymptotic fluctuations of the estimate   
is based on recent results of \cite{fort:2011} and is partly inspired by the   
works of \cite{pelletier:1998}. 

\medskip

 This paper is organized as follows. In Section \ref{sec:model},
we state and comment our basic assumptions. The algorithm convergence is
studied in Section \ref{sec:analysis}. The second order behavior of the
algorithm is described in Section \ref{sec:rates}.  An application relative to distributed estimation is described in
Section \ref{sec:appli}, along with some numerical simulations.  The appendix is devoted to
the proofs.

\section{The model and the basic assumptions}
\label{sec:model}

Let us start by writing the distributed algorithm described in the previous
section in a more compact form. Define the $\bR^{dN}$-valued random vectors
$\thn$ and $\bsYn$ by $\thn\eqdef(\theta_{n,1}^T,\dots,\theta_{n,N}^T)^T$ and
$\bsYn \eqdef (Y_{n,1}^T,\dots, Y_{n,N}^T)^T$ where $A^T$ denotes the transpose
of the matrix $A$.  The algorithm reduces to:
\begin{equation}
\thn = (W_n\otimes I_d)\left(\thnmu+\gamma_{n} \bsYn\right) \ ,
\label{eq:algo} 
\end{equation}
where $\otimes$ denotes the Kronecker product and $I_d$ is the $d× d$ identity
matrix. 

Note that we always assume $\bE|\bth_0|^2<\infty$ throughout the paper, where $|\,.\,|$
represents the Euclidean norm.
\begin{remark}
  Following~\cite{polyak:1990}, we also consider the {\sl averaged} sequence
  $(\bthn)_{n \geq 1}$, where $\bthn \eqdef(\bar \theta_{n,1}^T,\dots,\bar
  \theta_{n,N}^T)^T$ and the components are given by
\begin{equation}
  \label{eq:algo:aver}
  \bar\theta_{n,i} = \frac 1n\sum_{k=1}^n\theta_{k,i} 
\end{equation}
at any instant $n$ for node $i$. We will show in Section~\ref{sec:TCL} that
this averaging technique improves the convergence rate of the distributed
stochastic approximation algorithm.  In this paper, we analyze the asymptotic
behavior of both sequences $\bthn$ and $\thn$ as $n\to\infty$.
\end{remark}

\subsection{Observation and Network Models}
\label{subsec:model} 
Let $\left(\mu_\bth\right)_{\bth\in \bR^{dN}}$ be a family of probability
measures on $\bR^{dN}$ endowed with its Borel $\sigma$-field
$\mathcal{B}(\bR^{dN})$ such that for any $A \in \mathcal{B}(\bR^{dN})$, $\bth
\mapsto \mu_\bth(A)$ is measurable from $\mathcal{B}(\bR^{dN})$ to
$\mathcal{B}([0,1])$ where $\mathcal{B}([0,1])$ denotes the Borel
$\sigma$-field on $[0,1]$.

We consider the case when the random process $(\bsYn,W_n)_{n\geq 1}$
is adapted to a filtered probability space $\left(\Omega, \mathcal{A}, \bP,
  (\cF_n)_{n \geq 0}\right)$ and satisfy
\begin{assum}
\label{hyp:model}
\begin{enumerate}[a)] 
\item \label{hyp:model:a} $(W_n)_{n\geq 1}$ is a sequence of $N \times N$
  random matrices with non-negative elements such that:
    \begin{itemize}
    \item $W_n$ is row stochastic: $W_n\un=\un$,
    \item $\bE(W_n)$ is column stochastic: $\un^T\bE(W_n)=\un^T$,
    \end{itemize}
\item \label{hyp:model:c} For any positive measurable functions $f,g$ and any $n \geq 0$,
\begin{equation} 
  \bE[f(W_{n+1})g(\bs Y_{n+1})\vert \cF_n] 
=\bE[f(W_{n+1})]\,  \  \int g(\bs y) \mu_{\bs \theta_n}(d \bs y)\ .
\end{equation}
\item \label{hyp:model:iid} The sequence $(W_n)_{n\geq 1}$ is identically
  distributed and the spectral norm $\rho$ of matrix
  $\bE(W_1^T(I_N-\un\un^T/N)W_1)$ satisfies $\rho<1$.
\end{enumerate}
\end{assum}
Assumptions {\bf\ref{hyp:model}\ref{hyp:model:a}}) and
{\bf\ref{hyp:model}\ref{hyp:model:iid}}) capture the properties of the
gossiping scheme within the network.  Following the work of~\cite{boyd:2006},
random gossip is assumed in this paper.  
Assumption {\bf \ref{hyp:model}\ref{hyp:model:a}}) has been commented in 
the introduction. 
The assumption on the spectral norm in
Assumption~{\bf\ref{hyp:model}\ref{hyp:model:iid})} is a connectivity condition
of the underlying network graph which will be discussed in more details in
Section~\ref{sec:gossip}.  Assumption {\bf \ref{hyp:model}\ref{hyp:model:c})}
implies that \textit{(i)} the random variables (r.v.) $W_{n}$ and $\bs Y_{n}$ 
are independent
conditionally to the past, \textit{(ii)} the r.v. $(W_{n})_{n \geq 1}$ are
independent and \textit{(iii)} the conditional distribution of $\bsY_{n+1}$
given the past is $\mu_{\thn}$. This assumption is quite usual in the framework of stochastic approximation and is sometimes refer to
as a Robbins-Monro setting. 
As a particular case, this assumption holds if $\bs Y_{n+1}$ has the form $\bs
Y_{n+1} = g(\bs \theta_n) + \xi_{n+1}$ where $\xi_{n+1}$ is an i.i.d. process.

It is also assumed that the step-size sequence $(\gamma_n)_{n\geq 1}$ in the
stochastic approximation scheme (\ref{eq:tempupdate}) satisfies the following
conditions which are rather usual in the framework of stochastic approximation
algorithms~\cite{kushner:2003}:
\begin{assum} \label{hyp:step}
  The deterministic sequence $(\gamma_n)_{n\geq 1}$ is positive and such that $
  \sum_n\gamma_n=\infty$ and $\sum_n\gamma_n^2<\infty$. 
\end{assum} 


\subsection{Illustration: Some Examples of Gossip Schemes}
\label{sec:gossip}
We describe three standard gossip schemes so called \emph{pairwise},           
\emph{broadcast} and \emph{dropout} schemes. The reader may refer              
to~\cite{benezit:thesis} for a more complete picture and for more general      
gossip strategies. The network of agents is represented as a non-directed      
graph $(\sE,\sV)$ where $\sE$ is the set of edges and $\sV$ is the set of $N$  
vertices.

\subsubsection{Pairwise Gossip}
\label{subsubsec:pairwise} 
This example can be found in~\cite{boyd:2006} on average consensus (see also
\cite{bianchi:jakubo:2011}).

At time $n$, two connected nodes -- say $i$ and $j$ -- wake up, independently
from the past.  Nodes $i$ and $j$ compute the weighted average
$\theta_{n,i}=\theta_{n,j}= 0.5 \tilde\theta_{n,i} + 0.5 \tilde\theta_{n,j}$;
and for $k\notin\{i,j\}$, the nodes do not gossip: $\theta_{n,k}=\tilde
\theta_{n,k}$. In this example, given the edge $\{i,j\}$ wakes up, $W_n$ is
equal to $ I_N-(e_i-e_j)(e_i-e_j)^T/2$ where $e_j$ denotes the $i$th vector of
the canonical basis in $\bR^N$; and the matrices $(W_n)_{n \geq 0}$ are i.i.d.
and doubly stochastic.  Assumption~{\bf
  \ref{hyp:model}\ref{hyp:model:a})} is obviously satisfied.  Conditions for
Assumption~{\bf \ref{hyp:model}\ref{hyp:model:iid})} can be found
in~\cite{boyd:2006}: the spectral norm $\rho$ of the matrix
$\bE(W_n(I_N-\un\un^T/N)W_n^T)$ is in $[0,1)$ if and only if the weighted graph
$(\sE,\sV, \sW)$ is connected, where the wedge $\{i,j\}$ is weighted by the
probability that the nodes $i,j$ communicate.

\subsubsection{Broadcast Gossip}
This example is adapted from the broadcast scheme in~\cite{aysal:2009}.  At
time $n$, a node $i$ wakes up at random with uniform probability and broadcasts
its temporary update $\tilde \theta_{n,i}$ to all its neighbors $\cN_i$. Any
neighbor $j$ computes the weighted average $\theta_{n,j}=
\beta\tilde\theta_{n,i} + (1-\beta) \tilde\theta_{n,j}$. On the other hand, the
nodes $k$ which do not belong to the neighborhood of $i$ (including $i$ itself)
sets $\theta_{n,k}=\tilde\theta_{n,k}$. Note that, as opposed to the pairwise
scheme, the transmitter node $i$ does not expect any feedback from its
neighbors.  Then, given $i$ wakes up, the $(k,\ell)$th component of $W_n$ is
given by:
$$
w_n(k,\ell) = \left\{
  \begin{array}[h]{ll}
    1 & \textrm{if } k\notin \cN_i \textrm{ and }k=\ell \ ,\\
    \beta  & \textrm{if } k\in\cN_i \textrm{ and }\ell=i \ ,\\
    1-\beta & \textrm{if } k\in\cN_i \textrm{ and }k=\ell\ , \\
    0 & \textrm{otherwise.}
  \end{array}\right.
$$
This matrix $W_n$ is not doubly stochastic but $\un^T\bE(W_n)= \un^T$ (see
for instance~\cite{aysal:2009}).  Thus, the matrices $(W_n)_{n\geq 1}$ are
i.i.d. and satisfy the assumption~{\bf \ref{hyp:model}\ref{hyp:model:a})}.  Here
again, it can be shown that the spectral norm $\rho$ of
$\bE(W_n(I_N-\un\un^T/N)W_n^T)$ is in $[0,1)$ if and only if $(\sE,\sV)$ is a
connected graph (see~\cite{aysal:2009}).

\subsubsection{Network dropouts}
In this simple example, the network is subjected from time to time to          
a dropout: consider any sequence of gossip matrices $W_n$ satisfying           
Assumptions~{\bf \ref{hyp:model}-a)} and {\bf \ref{hyp:model}-c)}, and put     
$W'_n = B_n W_n + (1-B_n) I_N$ where $B_n$ is a sequence of i.i.d.~Bernoulli   
random variables independent of the $W_n$. The network 
whose gossip matrices are the $W'_n$ incurs a dropout at the moments where     
$B_n = 0$. At these moments, the nodes locally update their estimates and      
skip the gossip step. It is easy to show that the sequence $W'_n$ satisfies    
Assumptions~{\bf \ref{hyp:model}-a)} and {\bf \ref{hyp:model}-c)}.             

\section{Convergence results}
\label{sec:analysis}
In this section, we address the asymptotic behavior when $n \to \infty$ of the
algorithm (\ref{eq:algo}) and of its averaged version~(\ref{eq:algo:aver}).  To
that goal, we write $\thn$ as the sum of a vector in the consensus space and a
disagreement vector.  Let
\begin{equation}
  \label{eq:projectors}
  J\eqdef (\un\un^T/N)\otimes I_d \ , \qquad \qquad \Jo\eqdef I_{dN}-J \ ,
\end{equation}
be resp.  the projector onto the \emph{consensus subspace} $\left\{ \un\otimes
  \theta : \theta\in\bR^d\right\}$ and the projector onto the orthogonal
subspace.  For any vector ${\bs x} \in\bR^{dN}$, define the vector of $\bR^d$
\begin{equation}
  \label{eq:average}
  \la \bs x \ra \eqdef \frac 1N ({\un^T\otimes I_d})\bs x  \ , 
\end{equation}
so that $J \bs x = \un \otimes \la \bs x \ra$.  Note that $\la \bs x
\ra=(x_1+\dots+x_N)/N$ in case we write $\bs x =(x_1^T,\dots,x_N^T)^T$, $x_i$
in $\bR^d$. Set 
\begin{equation}
  \label{eq:orthogonal}
  \bs x_\bot \eqdef \Jo\bs x
\end{equation}
so that $\bs x=\un\otimes \la \bs x \ra+\bs x_\bot$. We will refer to $\othn
\eqdef \Jo\thn$ as the \emph{disagreement vector}. 

The convergence results rely on the following equations: under Assumption {\bf \ref{hyp:model}\ref{hyp:model:a})}, it holds
\begin{align}
  \athn &= \athnmu + \gamma_n \la (W_n\otimes I_d)(\bsY_n+\gamma_n^{-1}\othnmu)
\ra\ , \label{eq:SAmarkovnoise}\\
  \gamma_{n+1}^{-1} \othn &= \frac{\gamma_n}{\gamma_{n+1}} \Jo (W_n\otimes I_d)
  \left( \gamma_n^{-1} \othnmu + \Jo \bsY_n \right) \ .
\end{align}
We then first address the almost-sure convergence of the sequence $(\thn)_{n
  \geq 1}$ \textit{(i)} by showing that the non-homogeneous controlled Markov
chain $(\gamma_{n-1}^{-1} \othn)_n$ is stable enough so that $(\othn)_n$
converges almost-surely to zero and, \textit{(ii)} by applying results on the
convergence of stochastic approximation algorithms with state-dependent noise
in order to identify the limiting points of the sequence $(\athn)_{n \geq 1}$.
These results are stated in Theorem~\ref{the:cv} (and
Theorem~\ref{the:cv:vanish} in the case of vanishing communication rate); we
prove that all agents eventually reach an agreement on the value of their
estimate: the limit points of $(\thn)_{n \geq 1}$ (resp.  $(\bthn)_{n \geq 1}$)
given by (\ref{eq:algo}) (resp. (\ref{eq:algo:aver})) are of the form
$\un\otimes \theta_\star$.

It is known that convergence of stochastic approximation algorithms to an
attractive set is established provided that the sequence remains in a compact
set with probability one and is, with probability one, infinitely often in the
domain of attraction of this attractive set. Our convergence result is stated
under assumptions implying the recurrence property provided the sequence
remains almost-surely in a compact set. Therefore, our convergence results are
derived under a boundedness assumption, and we then provide in
Theorem~\ref{the:stab} sufficient conditions for this boundedness condition to
be satisfied.

All these convergence results are obtained under conditions on the
state-dependent noise sequence in the stochastic approximation
scheme~(\ref{eq:SAmarkovnoise}). These conditions roughly speaking assume
\textit{(i)} that there exist a Lyapunov function and an attractive set
associated to the mean field of the noisy Ordinary Differential Equation
(\ref{eq:SAmarkovnoise}), \textit{(ii)} regularity-in-$\bth$ of the
probability distributions $(\mu_{\bth})_{\bth \in \bR^{dN}}$. The exact 
assumptions are stated herein. 

\subsection{Assumptions on the distributions $\mu_{\bth}$}
\label{sec:assump:mutheta}
Define the function $h:\bR^d\to\bR^d$ by:
  \begin{equation}
h(\theta) \eqdef    \int  \la \bs y \ra  \, \mu_{\un\otimes \theta}(d \bs y) 
\label{eq:meanfield} \ .
\end{equation}
We shall refer to $h$ as the \emph{mean field}. The key ingredient to prove the
convergence of a stochastic approximation procedure is the existence of a
Lyapunov function $V$ for the mean field $h$ \emph{i.e.}, a function $V: \bR^d
\to \bR^+$ such that $\nabla V^T \ h \leq 0$. Precisely, it is assumed:
\begin{assum} \label{hyp:V} There exists a function $V : \bR^d \to \bR^+$ such 
that:
  \begin{enumerate}[a)]
  \item \label{hyp:V:a} $V$ is continuously differentiable.
  \item \label{hyp:V:b} For any $\theta \in \bR^d$, $ \nabla V(\theta)^T h(\theta) \leq 0$,
    where $h$ is given by (\ref{eq:meanfield}).
\item \label{hyp:V:d} For any $M>0$, the level set $\{\theta\in\bR^d: V(\theta)\leq M \}$ is compact.
\item \label{hyp:V:e} The set $\cL\eqdef \{\theta\in \bR^d: \nabla\!V(\theta)^T
  h(\theta) =0 \}$ is non-empty and there exists $M_0$ such that $\cL \subseteq
  \{V \leq M_0 \}$.
 \item \label{hyp:H:b} The function $h$ given by (\ref{eq:meanfield}) is continuous on $\bR^d$.
\item \label{hyp:V:f} $V(\cL)\eqdef\{V(\theta)\,:\theta\in \cL\}$ has an empty interior.
  \end{enumerate}
\end{assum}
Observe that Assumptions~{\bf \ref{hyp:V}\ref{hyp:V:e})} and ~{\bf \ref{hyp:V}\ref{hyp:V:f})} are trivially 
satisfied when $\cL$ is finite.

When $h$ is a gradient field i.e. $h = - \nabla g$, a natural candidate        
for the Lyapunov function is $V = g$. In this case, $\cL = \{ \nabla g =0 \}$; 
when $g$ is $d$-times differentiable, the Sard's theorem implies that     
$g(\{\nabla g =0\})$ has an empty interior. If $g$ is strictly convex and it   
reaches its minimum at a finite $\theta_\star$, the function 
$\theta \mapsto  | \theta - \theta_\star |^2$ 
is also a Lyapunov function. In this case, $\cL = \{\theta_\star \}$.                                                            

\begin{assum} \label{hyp:H} For any $M>0$,
  \begin{enumerate}[a)]
  \item \label{hyp:H:a}   $\sup_{|\bth|\leq M} \int \left|\bs y\right|^2 \, \mu_{\bth}(d \bs y)<\infty$.
  \item \label{hyp:H:abis} there exists a constant $C_M$ such that for any $|\bth|\leq M$,
\begin{equation}
\left| \int   \la \bs y \ra  \, \mu_{\bth}(d \bs y) -  \int   \la \bs y \ra \, \mu_{\un\otimes \la \bth \ra}(d \bs y)  \right| \leq C_M |\obth|\ .\label{eq:EHLipsch}
\end{equation}
  \end{enumerate}
\end{assum} 
The condition~(\ref{eq:EHLipsch}) is a regularity condition on the distribution
of $\la \bs Y_{n+1} \ra$ given the past. 

\subsection{Almost sure convergence of the distributed algorithm}
\label{sec:cvg}
Define $\sd(\theta,A)\eqdef\inf\{ |\theta-\varphi|\,:\varphi \in A\}$ for any
$\theta\in\bR^d$ and $A\subset\bR^d$.
\begin{theo}
\label{the:cv}
Suppose Assumptions {\bf \ref{hyp:model}}, {\bf \ref{hyp:step}}, {\bf
  \ref{hyp:V}}, {\bf \ref{hyp:H}}. Assume in addition that $\lim_n \gamma_n /
\gamma_{n-1} =1$ and 
\begin{equation}
  \label{eq:assump:stability}
  \PP{ \limsup_n |\thn| < \infty} =1 \ .
\end{equation}
Then, with probability one,
\begin{equation}
\label{eq:cv}
\lim_{n\to\infty}\sd( \la \thn \ra, \cL)=0 \ , \qquad \qquad \lim_n \othn = 0 \ ,
\end{equation}
where $\cL$ is given by Assumption {\bf \ref{hyp:V}}.  Moreover, with probability one,
$(\athn)_{n\geq 1}$ converges to a connected component of $\cL$.
\end{theo}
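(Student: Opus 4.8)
The plan is to exploit the decomposition $\thn=\un\otimes\athn+\othn$ and to treat the two components separately: first show that the disagreement $\othn$ vanishes almost surely, and then analyze $\athn=\la\thn\ra$ as a classical Robbins--Monro recursion whose mean field is exactly $h$. Throughout I would work on the almost-sure event $\{\sup_n|\thn|<\infty\}$ provided by the stability assumption~(\ref{eq:assump:stability}); since this event is the countable union of the events $\{\sup_n|\thn|\leq M\}$, a localization argument (stopping the process at $\tau_M=\inf\{n:|\thn|>M\}$) lets me assume a uniform bound $|\thnmu|\leq M$ when invoking the moment controls of Assumption~{\bf \ref{hyp:H}}, at the cost of taking $M\to\infty$ at the very end.

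For the disagreement I would start from the rescaled quantity $z_n\eqdef\gamma_{n+1}^{-1}\othn$, which by the second of the two displayed equations preceding Theorem~\ref{the:cv} satisfies
\begin{equation*}
z_n=\frac{\gamma_n}{\gamma_{n+1}}\,\Jo(W_n\otimes I_d)\bigl(z_{n-1}+\Jo\bsY_n\bigr)\ .
\end{equation*}
The crucial point is the contraction furnished by Assumption~{\bf \ref{hyp:model}\ref{hyp:model:iid})}: because $z_{n-1}=\gamma_n^{-1}\othnmu$ lies in the range of $\Jo$, and because $W_n$ is independent of $\cF_{n-1}$ by Assumption~{\bf \ref{hyp:model}\ref{hyp:model:c})}, one gets $\bE[|\Jo(W_n\otimes I_d)z_{n-1}|^2\mid\cF_{n-1}]=z_{n-1}^T\bigl(\bE[W_1^T(I_N-\un\un^T/N)W_1]\otimes I_d\bigr)z_{n-1}\leq\rho\,|z_{n-1}|^2$ with $\rho<1$ (here $W_n\un=\un$ ensures the consensus direction is annihilated). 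Expanding $\bE[|z_n|^2\mid\cF_{n-1}]$, bounding the cross term by Young's inequality and the noise term by Assumption~{\bf \ref{hyp:H}\ref{hyp:H:a})}, and using $\gamma_n/\gamma_{n+1}\to1$, I obtain for $n$ large a genuine contraction $\bE[|z_n|^2\mid\cF_{n-1}]\leq r\,|z_{n-1}|^2+C$ with $r<1$. This yields $\sup_n\bE|z_n|^2<\infty$, whence $\bE|\othn|^2=\gamma_{n+1}^2\,\bE|z_n|^2=O(\gamma_{n+1}^2)$ is summable by Assumption~{\bf \ref{hyp:step}}; Markov's inequality and the Borel--Cantelli lemma then give $\othn\to0$ almost surely.

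For the consensus component I would rewrite~(\ref{eq:SAmarkovnoise}) as
\begin{equation*}
\athn=\athnmu+\gamma_n\,\la(W_n\otimes I_d)\bsY_n\ra+\tfrac1N(\un^T W_n\otimes I_d)\othnmu\ .
\end{equation*}
Taking conditional expectations and using the column-stochasticity-in-mean $\un^T\bE[W_n]=\un^T$ together with the conditional independence of $W_n$ and $\bsY_n$ shows that the conditional mean of the middle term equals $\int\la\bs y\ra\,\mu_{\thnmu}(d\bs y)$, which by the regularity condition~(\ref{eq:EHLipsch}) equals $h(\athnmu)$ up to an error bounded by $C_M|\othnmu|$. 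The fluctuation of the middle term is a martingale increment with conditionally bounded second moment (Assumption~{\bf \ref{hyp:H}\ref{hyp:H:a})}), and the last term, being $\gamma_n\,\tfrac1N(\un^T(W_n-\bE W_n)\otimes I_d)z_{n-1}$ since $\la z_{n-1}\ra=0$, is likewise $\gamma_n$ times a martingale increment with bounded conditional variance. Hence the recursion takes the Robbins--Monro form $\athn=\athnmu+\gamma_n h(\athnmu)+\gamma_n\tilde e_n+\gamma_n R_n$ with $\sum\gamma_n\tilde e_n$ convergent (as $\sum\gamma_n^2<\infty$) and $\sum\gamma_n R_n$ absolutely convergent (as $|R_n|\leq C_M|\othnmu|=O(\gamma_n)$ in $L^1$). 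With $(\athn)$ bounded, $h$ continuous (Assumption~{\bf \ref{hyp:V}\ref{hyp:H:b})}), the Lyapunov function $V$ of Assumption~{\bf \ref{hyp:V}}, and $V(\cL)$ of empty interior, the convergence theorem for stochastic approximation with decreasing step size (\emph{e.g.}~\cite{delyon:2000,andrieu:2005}) yields $\sd(\athn,\cL)\to0$ and convergence of $\athn$ to a connected component of $\cL$; since $\la\thn\ra=\athn$, this is precisely~(\ref{eq:cv}) together with the final assertion.

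The main obstacle is the disagreement step: turning the one-step contraction coming from $\rho<1$ into a \emph{uniform} $L^2$ bound on $z_n$ requires controlling the cross and noise terms uniformly in $n$, which is only legitimate after the localization onto $\{\sup_n|\thn|\leq M\}$ and a careful treatment of the slowly varying factor $\gamma_n/\gamma_{n+1}$. The passage from summable second moments to almost-sure convergence, and the final union over $M$, must also be handled so that the martingale structure exploited in the consensus step is not destroyed by the conditioning on the stopping event.
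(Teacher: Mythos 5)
Your proposal is correct and follows essentially the same route as the paper: an $L^2$ contraction argument (via the spectral norm $\rho<1$ of $\bE(W_1^T(I_N-\un\un^T/N)W_1)$, localized on $\{\sup_k|\thk|\leq M\}$) showing $\bE|\othn|^2 = \cO(\gamma_n^2)$ and hence $\othn\to 0$ almost surely, followed by casting $\athn$ as a Robbins--Monro recursion whose perturbation splits into a square-summable martingale part and a bias bounded by $C_M|\othnmu|$ via~(\ref{eq:EHLipsch}), concluded by the stochastic-approximation convergence theorems of \cite{delyon:2000,andrieu:2005}. The only cosmetic differences are that the paper controls the localized moments with $\cF_{n-1}$-measurable indicators rather than stopping times, and groups your two martingale increments into a single term $e_n$.
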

Theorem~\ref{the:cv} is proved in
Appendix~\ref{sec:proof:the:cv:vanishingrate}.  Theorem~\ref{the:cv} shows that
when the stability condition \eqref{eq:assump:stability} holds true, the vector
of iterates $\thn$ given by (\ref{eq:algo}) converges almost surely to the
consensus space as $n\to\infty$ so that the network asymptotically achieves
consensus. Moreover, this consensus belongs to the attractive set of the
Lyapunov function.

Since $V$ is continuous, Theorem~\ref{the:cv} implies that with probability one
(w.p.1), the sequence $\{V(\la \thn \ra)\}_{n \geq 0}$ converges to a (random)
point $\upsilon_\star \in V(\cL)$. This can be used to show that $(\la \thn \ra
)_{n \geq 0}$ converges to a connected component of $\{\theta \in \cL:
V(\theta) = \upsilon_\star \}$.  In general, this does not imply that $(\la
\thn \ra )_{n \geq 0}$ converges w.p.1 to some (random point) $\theta_\star \in
\cL$. Note nevertheless that this holds true w.p.1 when $\cL$ is finite.

Along any sequence $(\thn)_{n \geq 0}$ converging to $\un \otimes \theta_\star$
for some $\theta_\star \in \cL$, the Cesaro's lemma implies that the averaged
sequence $(\bthn)_{n \geq 0}$ converges w.p.1 to $\un \otimes \theta_\star$.
Therefore, the averaged sequence (\ref{eq:algo:aver}) and the original sequence
(\ref{eq:algo}) have the same limiting value, if any.

\subsection{Case of a vanishing communication rate}
Theorems~\ref{the:cv} still holds true when the r.v. $(W_n)_{n\geq 1}$ are not
identically distributed.  An interesting example is when $\PP{ W_n = I_N } \to
1$ as $n\to \infty$.  From a communication point of view, this means that the
exchange of information between agents becomes rare as $n\to\infty$. This
context is especially interesting in case of wireless networks, where it is
often required to limit as much as possible the amount of communication between
the nodes.

In such cases, Assumption~{\bf\ref{hyp:model}\ref{hyp:model:iid})} does no
longer hold true.  We prove a convergence result for the algorithms
(\ref{eq:algo}) and (\ref{eq:algo:aver}) when the spectral norm of the
matrix $\bE(W_n^T(I_N-\un\un^T/N)W_n)$ and the step size sequence 
$(\gamma_n)_{n \geq 1}$ satisfy the following assumption:
\begin{assum}
\label{hyp:vanish}
$\sum_n \gamma_n = \infty$ and there exists $\alpha>1/2$ such that:
\begin{align}
  &\lim_{n\to\infty} n^{\alpha}\gamma_n = 0  \, ,  \qquad \qquad  \lim_{n \to \infty} n^{1 +\alpha} \gamma_n = +\infty\label{eq:condgamma}\ ,\\
  &\liminf_{n\to\infty} \frac{1-\rho_n}{n^{\alpha}\gamma_n}>0 \ ,
    \label{eq:condrho}
\end{align}
where $\rho_n$ is the spectral norm of the matrix
$\bE(W_n^T(I_N-\un\un^T/N)W_n)$.
\end{assum}
Note that under Assumption {\bf \ref{hyp:vanish}}, $\lim_n n (1-\rho_n) =
+\infty$. A typical framework where this assumption is useful is the following.
Let $(B_n)_n$ be a Bernoulli sequence of independent r.v.  with $\bP(B_n=1)
=p_n$ and the probabilities $p_n$ decrease in such a way that 
$\liminf_n p_n /(n^\alpha \gamma_n) > 0$: replace the matrices
$W_n$ described by Assumption~{\bf \ref{hyp:model}} with $ B_n W_n + (1-B_n)
I_N$.  Here $p_n$ represents the probability that a communication between the
nodes takes place at time $n$.

We also have $\sum_n \gamma_n^2 < \infty$ so that the step-size sequence
$(\gamma_n)_{n \geq 1}$ satisfies the standard conditions for stochastic
approximation scheme to converge.

An example of sequences $(\gamma_n)_{n \geq 1}, (\rho_n)_{n \geq 1}$ satisfying
Assumption~{\bf \ref{hyp:vanish}} is given by $1-\rho_n=a/n^\eta$ and
$\gamma_n=\gamma_0/n^\xi$ with $\eta, \xi$ such that $0\leq \eta < \xi -1/2
\leq 1/2$.  In particular, $\xi\in (1/2,1]$ and $\eta\in [0,1/2)$.

When the r.v.  $(W_n)_{n\geq 1}$ are i.i.d., the spectral norm
$\rho_n$ is equal to $\rho$ for any $n$, and (\ref{eq:condrho}) implies
$\rho<1$: one is back to Assumption~{\bf\ref{hyp:model}\ref{hyp:model:iid})}.
From this point of view, Assumption~{\bf \ref{hyp:vanish}} is weaker than
Assumption~{\bf\ref{hyp:model}\ref{hyp:model:iid})}. Nevertheless, stronger
constraints than Assumption~{\bf\ref{hyp:model}\ref{hyp:model:iid})} are needed
on the step size $(\gamma_n)_{n \geq 1}$.

When substituting Assumption~{\bf\ref{hyp:model}\ref{hyp:model:iid})} by
Assumption~{\bf \ref{hyp:vanish}}, we have
\begin{theo}
\label{the:cv:vanish}
The statement of Theorem~\ref{the:cv} remains valid under Assumptions {\bf
  \ref{hyp:model}\ref{hyp:model:a}-\ref{hyp:model:c})}, {\bf \ref{hyp:step}},
{\bf \ref{hyp:V}}, {\bf \ref{hyp:H}}, {\bf \ref{hyp:vanish}} and
Eq.~(\ref{eq:assump:stability}).
\end{theo}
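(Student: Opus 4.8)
The plan is to notice that Assumption~{\bf\ref{hyp:model}\ref{hyp:model:iid})} is used in the proof of Theorem~\ref{the:cv} only to establish the vanishing of the disagreement $\othn\to0$ (and the summability $\sum_n|\othn|^2<\infty$ that feeds the noise analysis of the consensus component). Everything concerning $\athn$ relies on the column stochasticity in the mean $\un^T\bE[W_n]=\un^T$ and on the moment bounds of Assumption~{\bf\ref{hyp:H}}, none of which uses identical distribution. I would therefore split the argument into a disagreement part, to be redone under Assumption~{\bf\ref{hyp:vanish}}, and a mean-field part that I import verbatim from Theorem~\ref{the:cv}.

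For the disagreement I would start from $\othn=\Jo(W_n\otimes I_d)(\othnmu+\gamma_n\Jo\bsY_n)$ and use the identity $(W_n^T\otimes I_d)\Jo(W_n\otimes I_d)=\bigl(W_n^T(I_N-\un\un^T/N)W_n\bigr)\otimes I_d$ together with the conditional independence of $W_n$ and $\bsY_n$ and the relation $\bE[W_n\mid\cF_{n-1}]=\bE[W_n]$ granted by Assumption~{\bf\ref{hyp:model}\ref{hyp:model:c})} to obtain the contraction
\[
\bE\bigl[|\othn|^2\mid\cF_{n-1}\bigr]\le\rho_n\,\bE\bigl[|\othnmu+\gamma_n\Jo\bsY_n|^2\mid\cF_{n-1}\bigr],
\]
where $\rho_n$ is the spectral norm of $\bE[W_n^T(I_N-\un\un^T/N)W_n]$. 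After localizing through the stopping times $\tau_M=\inf\{n:|\thn|>M\}$, so that the observation moments of Assumption~{\bf\ref{hyp:H}\ref{hyp:H:a})} are bounded by a deterministic constant on $\{n\le\tau_M\}$, I would expand the right-hand side and absorb the cross term $2\gamma_n\othnmu^T\Jo\,\bE[\bsY_n\mid\cF_{n-1}]$ by Young's inequality with an $n$-dependent weight $\epsilon_n\asymp 1-\rho_n$. Setting $v_n=\bE[|\othn|^2\ind_{n\le\tau_M}]$ this yields a scalar recursion $v_n\le(1-\delta_n)v_{n-1}+w_n$ with $\delta_n\asymp 1-\rho_n$ and $w_n\asymp\gamma_n^2/(1-\rho_n)$.

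The crux is the bookkeeping on $(\delta_n,w_n)$ under Assumption~{\bf\ref{hyp:vanish}}. Using $1-\rho_n\gtrsim n^\alpha\gamma_n$ from \eqref{eq:condrho} one gets $w_n\lesssim\gamma_n/n^\alpha$, which is $o(n^{-2\alpha})$ because $n^\alpha\gamma_n\to0$, hence summable since $\alpha>1/2$; meanwhile $\sum_n\delta_n=\infty$ follows from $n^{1+\alpha}\gamma_n\to+\infty$. A standard deterministic recursion lemma then produces the sharp rate $v_n\lesssim w_n/\delta_n\lesssim n^{-2\alpha}$, and here $n^{1+\alpha}\gamma_n\to+\infty$ is used a second time, to guarantee that $\delta_n$ decays slower than $1/n$ so that the lemma applies with the comparison sequence $n^{-2\alpha}$. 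Summing in $n$ and letting $M\to\infty$ on the stability event \eqref{eq:assump:stability} gives $\sum_n|\othn|^2<\infty$ almost surely, and in particular $\othn\to0$. I expect this step, namely extracting the precise $L^2$ rate rather than mere convergence when the contraction is only $\delta_n\to0$, to be the main obstacle: it is exactly where the two-sided calibration of $\gamma_n$ against $1-\rho_n$ in Assumption~{\bf\ref{hyp:vanish}} is essential.

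Finally I would recycle the consensus analysis of Theorem~\ref{the:cv}. Starting from \eqref{eq:SAmarkovnoise} and using $\un^T\bE[W_n]=\un^T$ together with $\la\othnmu\ra=0$, the conditional drift of $\athn$ equals $h(\athnmu)$ up to the bias $\int\la\bs y\ra\mu_{\thnmu}(d\bs y)-h(\athnmu)$, bounded by $C_M|\othnmu|\to0$ via Assumption~{\bf\ref{hyp:H}\ref{hyp:H:abis})}. The increment also carries the martingale term $\la(W_n\otimes I_d)\othnmu\ra$, of conditional variance $O(|\othnmu|^2)$ and hence with summable second moments by the previous step, plus the usual $\bsY_n$-fluctuation controlled by $\sum_n\gamma_n^2<\infty$. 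Invoking the stochastic approximation machinery with the Lyapunov function $V$ of Assumption~{\bf\ref{hyp:V}} then gives $\sd(\athn,\cL)\to0$ and convergence to a connected component of $\cL$ exactly as in Theorem~\ref{the:cv}, and the claim for the averaged sequence follows by Cesaro's lemma.
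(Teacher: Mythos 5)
Your proposal is correct and follows essentially the same route as the paper: a contraction recursion for the localized second moments of $\othn$ yielding the rate $\bE\left[|\othn|^2 \un_{\sup_{k\leq n-1}|\thk|\leq M}\right] = \cO(n^{-2\alpha})$ (the paper's Lemma~\ref{lem:cvg:othn}, proved via the same kind of deterministic recursion lemma with comparison sequence $\phi_n = n^{2\alpha}$, exploiting both halves of Assumption~{\bf\ref{hyp:vanish}} exactly as you describe), followed by the Robbins--Monro decomposition of $\athn$ into drift, a bias bounded by $C_M|\othnmu|$ and a martingale term whose variance is summable thanks to that rate, concluded by standard stochastic approximation results (the paper cites Theorems~2.2 and 2.3 of \cite{andrieu:2005}). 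Your Young-inequality treatment of the cross term and stopping-time localization are only cosmetic variants of the paper's argument.
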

Theorem~\ref{the:cv:vanish} is proved in Appendix~\ref{sec:proof:the:cv:vanishingrate}.

\subsection{Stability}
In this section, we provide sufficient conditions implying
(\ref{eq:assump:stability}).  These conditions are stated in the case of a
vanishing communication rate but remain valid when Assumption~{\bf
  \ref{hyp:vanish}} is replaced with Assumption~{\bf
  \ref{hyp:model}\ref{hyp:model:iid})}. The proof of Theorem~\ref{the:stab} is
given in Appendix~\ref{sec:proof:the:stab}.

\begin{theo}
  \label{the:stab} Suppose Assumptions {\bf \ref{hyp:model}a-b)}, {\bf \ref{hyp:step}}, {\bf
    \ref{hyp:V}\ref{hyp:V:a}-\ref{hyp:H:b})} and {\bf\ref{hyp:vanish}}. Assume
  in addition that 
 \renewcommand{\labelenumi}{ST\theenumi .}
 \begin{enumerate}
  \item \label{ST:1} $\nabla V$ is Lipschitz on $\bR^d$. 
  \item \label{ST:2} there exists a constant $C$ such that for any $\bs
    \theta \in \bR^{dN}$,
\begin{align*}
& \int \left|\bs y\right|^2 \, \mu_{\bth}(d \bs y)  \leq C\left(1 + V(\ath) + |\obth|^2 \right)  \ , \\
&\left| \int   \la \bs y \ra  \, \mu_{\bth}(d \bs y) -  \int   \la \bs y \ra \, \mu_{\un\otimes \la \bth \ra}(d \bs y)  \right|\leq C |\obth|\ .
\end{align*}
  \end{enumerate}
 Then $\PP{ \limsup_n
    |\thn| < \infty} =1 $.
\end{theo}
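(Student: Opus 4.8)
The plan is to control jointly the Lyapunov function evaluated at the network average, $V_n \eqdef V(\athn)$, and the squared disagreement $U_n \eqdef |\othn|^2$, and to exhibit a weighted combination $\Phi_n \eqdef V_n + \beta_n U_n$ obeying an almost-supermartingale inequality to which a Robbins--Siegmund argument applies. Writing $P \eqdef I_N - \un\un^T/N$ so that $\Jo = P\otimes I_d$, the recursion (\ref{eq:algo}) together with $W_n\un=\un$ yields the two update equations displayed in Section~\ref{sec:analysis}. I would first record, from Assumption~{\bf\ref{hyp:model}\ref{hyp:model:c})}, that the conditional law of $\bsYn$ given $\cF_{n-1}$ is $\mu_{\thnmu}$ and that $W_n,\bsYn$ are conditionally independent; the second inequality in ST\ref{ST:2} then gives $\bE[\la\bsYn\ra\,|\,\cF_{n-1}] = h(\athnmu) + r_n$ with $|r_n|\le C|\othnmu|$, while the first inequality bounds $\int|\bs y|^2\mu_{\thnmu}(d\bs y)$ by $C(1+V_{n-1}+U_{n-1})$.

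For the disagreement, since $\othn = ((PW_n)\otimes I_d)(\othnmu + \gamma_n\Jo\bsYn)$ and $(PW_n)^T(PW_n)=W_n^TPW_n$, conditioning first on $\bsYn$ and using that the spectral norm of $\bE(W_n^TPW_n)$ equals $\rho_n$ gives the clean bound $\bE[U_n\,|\,\cF_{n-1}] \le \rho_n\,\bE[|\othnmu+\gamma_n\Jo\bsYn|^2\,|\,\cF_{n-1}]$. Expanding the square, bounding the cross term by Young's inequality so as to extract a weight $\tfrac14(1-\rho_n)$, and invoking the moment bound above, I obtain
\[
\bE[U_n\,|\,\cF_{n-1}] \le \Big(1-\tfrac34(1-\rho_n)\Big)U_{n-1} + \frac{C'\gamma_n^2}{1-\rho_n}\big(1+V_{n-1}+U_{n-1}\big).
\]
The point of Assumption~{\bf\ref{hyp:vanish}}, namely $1-\rho_n\ge c_0\,n^\alpha\gamma_n$ with $\alpha>1/2$, is precisely that the contraction rate $1-\rho_n$ dominates $\gamma_n$, so the equilibrium level $\gamma_n^2/(1-\rho_n)^2\lesssim n^{-2\alpha}$ of this recursion is summable; this is what will decouple the disagreement feedback from the Lyapunov recursion.

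For the average, the descent lemma applied to $V$ (licit since ST\ref{ST:1} makes $\nabla V$ globally $L$-Lipschitz, whence also $|\nabla V|^2\le 2LV$ because $V\ge0$) gives $V_n \le V_{n-1} + \nabla V(\athnmu)^T(\athn-\athnmu) + \tfrac{L}{2}|\athn-\athnmu|^2$. Taking conditional expectations, the first-order term splits into the genuinely non-positive drift $\gamma_n\nabla V(\athnmu)^Th(\athnmu)\le0$ (Assumption~{\bf\ref{hyp:V}\ref{hyp:V:b})}), a bias term $\gamma_n\nabla V^T r_n$ that I bound by $\tfrac12 C\sqrt{2L}(\gamma_n^2V_{n-1}+U_{n-1})$ via $|r_n|\le C\sqrt{U_{n-1}}$ and Young with weight $\gamma_n$, and a gossip fluctuation whose conditional mean vanishes by $\un^T\bE(W_n)=\un^T$. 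The key nuisance is the second-order term: because the gossip noise carries the factor $\gamma_n^{-1}\othnmu$, the increment $\athn-\athnmu$ contains an $O(|\othnmu|)$ contribution that is \emph{not} damped by $\gamma_n$, producing a term of order $U_{n-1}$ in the drift. Altogether I reach $\bE[V_n\,|\,\cF_{n-1}] \le (1+a_n)V_{n-1} + p\,U_{n-1} + b_n$ with $a_n,b_n=O(\gamma_n^2)$ summable and $p>0$ a constant.

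The final step is to absorb the offending $p\,U_{n-1}$ using the disagreement's fast contraction. I would set $\beta_n \eqdef 2p/\big(\tfrac34(1-\rho_n)\big)$, so that $\beta_n\asymp(1-\rho_n)^{-1}\to\infty$ varies slowly, and form $\Phi_n \eqdef V_n+\beta_n U_n$. Combining the two drift inequalities, the coefficient of $V_{n-1}$ is $1+\tilde a_n$ with $\tilde a_n=a_n+\beta_n C'\gamma_n^2/(1-\rho_n)=O(\gamma_n^2)+O(n^{-2\alpha})$ summable, while by the choice of $\beta_n$ the coefficient of $U_{n-1}$ equals $\beta_n-p+o(1)\le\beta_{n-1}(1+\tilde a_n)$ for $n$ large (using slow variation of $\beta_n$), and the additive term $\tilde b_n=b_n+\beta_n C'\gamma_n^2/(1-\rho_n)$ is summable for the same reason; here $\alpha>1/2$ is exactly what makes $\sum n^{-2\alpha}<\infty$. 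After localizing by a stopping time to guarantee integrability of each $\Phi_n$, the Robbins--Siegmund almost-supermartingale theorem yields that $\Phi_n$ converges a.s.\ to a finite limit; hence $\sup_n V_n<\infty$ a.s., and since $\beta_n\to\infty$ forces $U_n\le\Phi_n/\beta_n\to0$, we get $\othn\to0$ a.s. By the compactness of the level sets of $V$ (Assumption~{\bf\ref{hyp:V}\ref{hyp:V:d})}) the average $\athn$ then stays in a compact set, and combining through $|\thn|\le\sqrt N|\athn|+|\othn|$ gives $\PP{\limsup_n|\thn|<\infty}=1$. The main obstacle throughout is the two-way coupling between $V_n$ and $U_n$; resolving it hinges on choosing the weight $\beta_n$ to scale like $(1-\rho_n)^{-1}$ so that the disagreement's contraction, which Assumption~{\bf\ref{hyp:vanish}} guarantees to be faster than the step size, dominates both its own noise injection and its feedback into the Lyapunov drift.
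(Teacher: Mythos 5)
Your drift computations are sound: the conditional inequalities you derive for $U_n=|\othn|^2$ and $V_n=V(\athn)$ are essentially the same estimates the paper obtains in the proof of Lemma~\ref{lem:agreement} (inequalities (\ref{eq:majun})--(\ref{eq:majvn}), there stated for the expectations $u_n=\bE U_n$, $v_n=\bE V_n$). The gap is in your final step, the weighted combination $\Phi_n=V_n+\beta_n U_n$ with $\beta_n\asymp(1-\rho_n)^{-1}$ and Robbins--Siegmund. For the almost-supermartingale inequality you need the coefficient of $U_{n-1}$, which after your cancellation equals $\beta_n-p+O(n^{-2\alpha})$, to be at most $\beta_{n-1}(1+\tilde a_n)$ with $\sum_n\tilde a_n<\infty$; this amounts to $\beta_n-\beta_{n-1}\le p+o(1)$, i.e.\ to bounded increments of $(1-\rho_n)^{-1}$. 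You invoke ``slow variation of $\beta_n$'', but Assumption~{\bf\ref{hyp:vanish}} gives no such regularity: it imposes only the one-sided bound $\liminf_n(1-\rho_n)/(n^\alpha\gamma_n)>0$ together with (\ref{eq:condgamma}), and neither $\rho_n$ nor $\gamma_n$ is required to vary slowly. Nothing forbids, say, $1-\rho_n$ alternating between $1$ (even $n$) and $c_0\,n^\alpha\gamma_n$ (odd $n$); since $n^\alpha\gamma_n\to0$, your $\beta_n$ then has upward jumps of size $\asymp(n^\alpha\gamma_n)^{-1}\to\infty$, the required coefficient comparison fails along the odd subsequence, and Robbins--Siegmund cannot be applied. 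Deterministic substitutes do not repair this under the stated hypotheses: $\beta_n=c\,n^{2\alpha}$ makes the parasitic coefficient $\beta_n\gamma_n^2/(1-\rho_n)\le(c/c_0)\,n^\alpha\gamma_n$ of $V_{n-1}$ possibly non-summable (e.g.\ $\gamma_n=n^{-0.8}$, $\alpha=0.6$), while $\beta_n=c/(n^\alpha\gamma_n)$ hits the same increment problem when $\gamma_n$ itself oscillates, which Assumptions {\bf\ref{hyp:step}} and {\bf\ref{hyp:vanish}} permit.

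This is precisely the difficulty the paper's proof is built to avoid. It works with the deterministic sequences $u_n,v_n$ and, in Lemma~\ref{lem:suites}, weights $u_n$ by the smooth sequence $\phi_n=n^{2\alpha}$, whose slow variation is automatic; the contraction requirement then reads $\rho_n\le\phi_{n-1}/\phi_n-c\,\gamma_n\sqrt{\phi_n}$, which is exactly what (\ref{eq:condgamma})--(\ref{eq:condrho}) guarantee (this is where $n^{1+\alpha}\gamma_n\to+\infty$ enters), and the coupling with $v_n$ is absorbed not through a weighted supermartingale but through a product argument on the cumulative sums $A_n$, using $\sum_n\phi_n^{-1}<\infty$, i.e.\ $\alpha>1/2$. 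Tellingly, the Remark following Lemma~\ref{lem:suites} shows that the $\rho_n$-adapted weight $\phi_n=(1-\rho_n)^2/\gamma_n^2$ --- the analogue of your $\beta_n$ --- is admissible only under the extra regularity conditions (\ref{eq:condition:phi:set1:rmk}), which are not among the hypotheses of Theorem~\ref{the:stab}. Two further remarks: since the paper only controls moments, it needs an additional step (Lemma~\ref{lem:stability:athn}, via recurrence and the stability theorem of \cite{andrieu:2005}) to pass from $\sup_n\bE V(\athn)<\infty$ to almost-sure boundedness, whereas your pathwise route would give $\sup_n V(\athn)<\infty$ a.s.\ directly; so your approach is genuinely attractive, and it does go through if one adds mild regularity such as $\gamma_n/\gamma_{n-1}$ and $(1-\rho_{n-1})/(1-\rho_n)$ bounded --- but it does not prove the theorem under its stated assumptions.
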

It is proved in Appendix~\ref{sec:proof:the:stab} that under the assumptions of
Theorem~\ref{the:stab}, a stronger result holds (see
Lemma~\ref{lem:agreement}): the sequence $(\othn)_{n \geq 1}$ converges to zero
with probability one and $(\athn)_{n \geq 1}$ is stable in the sense that
$\sup_n V(\la \thn \ra) < \infty$. 

Note that the Lipschitz assumption on the gradient $\nabla V$ combined with
Assumption~ST{\bf \ref{ST:2}} implies that $h$ is at most linearly
increasing when $|\bs \theta| \to \infty$.

The stability condition~(\ref{eq:assump:stability}) could also be satisfied by
modifying the algorithm (\ref{eq:algo}) with a truncation step. Truncation on a
fixed compact set of $\bR^{dN}$ is easy to implement and natural when
constraints on the system are available a priori; nevertheless it becomes
impractical and questionable in many situations of interest when a compact set
containing the limiting set $\cL$ is not known a priori. Another stability
strategy consists in truncations on randomly varying compact
sets~\cite{chen:guo:gao:1988}; derivation of conditions implying the stability
of Algorithm~(\ref{eq:algo}) without modifying its limiting set under such an
approach is out of the scope of this paper and left to the interested reader.

\section{Convergence rates}
\label{sec:rates}
In this section, we derive the convergence rate in $L^2$ of the disagreement
sequence $(\othn)_{n }$ defined $\othn \eqdef \Jo \thn$ (see
(\ref{eq:projectors}) and \eqref{eq:orthogonal}). We also derive Central Limit
Theorems for the sequences $(\thn)_n$ and $(\bthn)_n$: we show that averaging
always improves the convergence rate and the asymptotic variance.

\subsection{Convergence rate of the disagreement vector $\othn$}
Whereas Theorem~\ref{the:cv} states that $\lim_n \othn = 0$ almost surely,
Theorem~\ref{the:moment2bounded} provides an information on the convergence
rate: $\othn$ tends to zero in $L^2$ at rate $1/\gamma_n$. For a positive
deterministic sequence $(a_n)_{n\geq 1}$, $\cO(a_n)$ stands for a deterministic
$\bR^\ell$-valued sequence $(x_n)_{n \geq 1}$ such that $\sup_n a_n^{-1}|x_n|
<\infty$. The proof of Theorem~\ref{the:moment2bounded} is given in
Appendix~\ref{sec:proof:the:moment2bounded}.  
\begin{theo}
\label{the:moment2bounded}
Suppose Assumptions {\bf \ref{hyp:model}}, {\bf \ref{hyp:step}} and {\bf \ref{hyp:H}\ref{hyp:H:a})}. For any
$M>0$,
  \begin{equation}
\label{eq:moment2bounded}
  \gamma_n^{-2} \bE \left(|\othn|^2 \un_{\sup_{k \leq n-1} |\bs \theta_k| \leq M}\right) \leq \frac{\rho \ C}{(1-\sqrt\rho)^2}  + \cO\left( \rho^{n} \gamma_n^{-2} \right)
\end{equation}
where $\rho$ is given by Assumption {\bf \ref{hyp:model}\ref{hyp:model:iid})}
and where $C \eqdef \limsup_{n\to\infty} \bE(|\Yon|^2 \un_{\sup_{k \leq n-1} |\bs \theta_k| \leq M})$ is finite.
\end{theo}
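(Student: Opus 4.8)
The plan is to iterate a one-step contraction on the disagreement vector. First I would record the recursion for $\othn$: writing $\thnmu=\un\otimes\la\thnmu\ra+\othnmu$ and $\bsYn=\un\otimes\la\bsYn\ra+\Yon$ and using the row-stochasticity $W_n\un=\un$ of Assumption~{\bf\ref{hyp:model}\ref{hyp:model:a})}, the consensus components are annihilated by $\Jo$, leaving
\[
\othn=\Jo(W_n\otimes I_d)\bigl(\othnmu+\gamma_n\Yon\bigr)\ .
\]
The crucial point is a conditional contraction. Assumption~{\bf\ref{hyp:model}\ref{hyp:model:c})} makes $W_n$ independent of $\sigma(\cF_{n-1},\bsYn)$, and $\Jo=(I_N-\un\un^T/N)\otimes I_d$ is a symmetric projector, so for any $\sigma(\cF_{n-1},\bsYn)$-measurable $v$,
\[
\bE\bigl[\,|\Jo(W_n\otimes I_d)v|^2\,\big|\,\cF_{n-1},\bsYn\bigr]
=v^T\bigl(\bE[W_n^T(I_N-\un\un^T/N)W_n]\otimes I_d\bigr)v\le\rho\,|v|^2\ ,
\]
the inequality being Assumption~{\bf\ref{hyp:model}\ref{hyp:model:iid})}: the averaged matrix is symmetric positive semidefinite with spectral norm $\rho$, and tensoring by $I_d$ leaves its eigenvalues unchanged.

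Next I would convert this into a scalar recursion. Set $A_k\eqdef\{\sup_{j\le k}|\bth_j|\le M\}$, so that $\un_{A_{n-1}}$ is $\cF_{n-1}$-measurable and $A_{n-1}\subseteq A_{n-2}$. Taking $v=\othnmu+\gamma_n\Yon$ in the contraction, multiplying by $\un_{A_{n-1}}$ and integrating gives $\bE(|\othn|^2\un_{A_{n-1}})\le\rho\,\bE(\un_{A_{n-1}}|\othnmu+\gamma_n\Yon|^2)$. With $m_n\eqdef\bigl(\bE(|\othn|^2\un_{A_{n-1}})\bigr)^{1/2}$ and $c_n\eqdef\bigl(\bE(|\Yon|^2\un_{A_{n-1}})\bigr)^{1/2}$, Minkowski's inequality together with $\un_{A_{n-1}}\le\un_{A_{n-2}}$ yields
\[
m_n\le\sqrt\rho\,\bigl(m_{n-1}+\gamma_n c_n\bigr)\ .
\]
Finiteness of $C=\limsup_n c_n^2$ follows from Assumption~{\bf\ref{hyp:H}\ref{hyp:H:a})}: on $A_{n-1}$ one has $|\thnmu|\le M$, whence $c_n^2\le\sup_{|\bth|\le M}\int|\bs y|^2\mu_{\bth}(d\bs y)<\infty$, uniformly in $n$; moreover $m_0<\infty$ since $\bE|\bth_0|^2<\infty$.

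Finally I would read off the rate. Dividing the recursion by $\gamma_n$ and setting $\phi_n\eqdef m_n/\gamma_n$ gives $\phi_n\le\sqrt\rho\,(\gamma_{n-1}/\gamma_n)\,\phi_{n-1}+\sqrt\rho\,c_n$. Invoking the step-size regularity $\gamma_{n-1}/\gamma_n\to1$ (the condition $\lim_n\gamma_n/\gamma_{n-1}=1$ already used in Theorem~\ref{the:cv} and satisfied by the step sizes considered here), for each $\varepsilon>0$ the factor $\sqrt\rho\,\gamma_{n-1}/\gamma_n$ is eventually at most $q\eqdef\sqrt\rho(1+\varepsilon)<1$ while $c_n\le\sqrt C+\varepsilon$; iterating $\phi_n\le q\phi_{n-1}+\sqrt\rho(\sqrt C+\varepsilon)$ and letting $\varepsilon\to0$ gives $\limsup_n\phi_n\le\sqrt\rho\sqrt C/(1-\sqrt\rho)$, i.e. $\limsup_n\gamma_n^{-2}m_n^2\le\rho C/(1-\sqrt\rho)^2$. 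To obtain the bound for every $n$ in the stated form, I would instead unroll the recursion as $m_n\le\rho^{n/2}m_0+\sum_{k=1}^n\rho^{(n-k+1)/2}\gamma_k c_k$: the geometric initial term gives $\gamma_n^{-2}\rho^n m_0^2=\cO(\rho^n\gamma_n^{-2})$, and the same limit analysis of the convolution produces the constant $\rho C/(1-\sqrt\rho)^2$, the finitely many early indices with $c_k>\sqrt C$ being absorbed into the $\cO(\rho^n\gamma_n^{-2})$ remainder.

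I expect the main obstacle to be precisely this last step: controlling $\gamma_n^{-1}\sum_{k}\rho^{(n-k+1)/2}\gamma_k c_k$, in which the ratios $\gamma_k/\gamma_n$ grow as $k$ decreases. The geometric weight $\rho^{(n-k)/2}$ dominates these ratios, but turning this into the clean constant $\rho C/(1-\sqrt\rho)^2$ genuinely requires the regularity $\gamma_n/\gamma_{n-1}\to1$ (so that each fixed-lag ratio $\gamma_{n-j}/\gamma_n$ tends to one) together with a dominated-convergence argument. Without such regularity the factor $\gamma_{n-1}/\gamma_n$ may keep $\sqrt\rho\,\gamma_{n-1}/\gamma_n$ above $1$ along a subsequence, so some control on the oscillations of $(\gamma_n)$ is indispensable for the stated estimate.
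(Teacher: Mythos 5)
Your argument is correct and is essentially the paper's own proof. The paper sets $V_n=(I_N-\un\un^T/N)W_n$, unrolls the same recursion into $\othn=\sum_{k=1}^n\gamma_k\Phi_{n,k}\Yok+\Phi_{n,1}\bs\theta_{\bot,0}$ with $\Phi_{n,k}=(V_n\otimes I_d)\cdots(V_k\otimes I_d)$, proves your conditional contraction in the equivalent form $\|\Phi_{n,k}X\|_2^2\le\rho\,\|\Phi_{n-1,k}X\|_2^2$ (from Assumptions {\bf \ref{hyp:model}\ref{hyp:model:c})} and {\bf \ref{hyp:model}\ref{hyp:model:iid})}), treats the initial-condition term $\Phi_{n,1}\bs\theta_{\bot,0}$ as $\cO_{L^2}(\rho^{n/2})$, and controls $\sum_{k=1}^n\gamma_k\sqrt{\rho}^{\,n-k+1}c_k$ via Minkowski's inequality; your one-step bound $m_n\le\sqrt\rho\,(m_{n-1}+\gamma_n c_n)$, iterated, is the identical computation, and your derivation of $C<\infty$ from Assumption {\bf \ref{hyp:H}\ref{hyp:H:a})} is also the paper's.

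The only real divergence is the final limiting step, and there your diagnosis is right and worth stating plainly. The paper disposes of the normalized convolution $\gamma_n^{-1}\sum_{k=1}^n\gamma_k\sqrt{\rho}^{\,n-k+1}c_k$ by citing P\'olya--Szeg\H{o} (Result~178), i.e.\ the fact that $\sum_{j\ge 0}t^j a_{n-j}\to a/(1-t)$ when $a_n\to a$ and $0<t<1$; but to apply it here one must write the sum as $\sum_{j}\sqrt{\rho}^{\,j+1}(\gamma_{n-j}/\gamma_n)\,c_{n-j}$, which requires exactly the ratio control you invoke ($\gamma_{n-1}/\gamma_n\to 1$, or at least $\sqrt\rho\,\gamma_{n-1}/\gamma_n\le q<1$ eventually, plus domination in $j$). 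Without such control the estimate genuinely fails: with $\gamma_n=1/n$ for even $n$ and $\gamma_n=1/n^2$ for odd $n$ (allowed by Assumption {\bf \ref{hyp:step}}), already the lag-one term makes the normalized convolution blow up along odd $n$, and the one-step recursion shows $\bE(|\othn|^2\un_{\cdot})$ itself can be of order $\rho^2\gamma_{n-1}^2\gg\gamma_n^2$ while $\rho^n\gamma_n^{-2}\to 0$. So the extra hypothesis you use is not a weakness of your method: it is tacitly required by the paper's proof as well; the statement of Theorem~\ref{the:moment2bounded} simply omits it, and it is available everywhere the theorem is invoked (Theorem~\ref{the:cv} assumes $\lim_n\gamma_n/\gamma_{n-1}=1$, the remark following Lemma~\ref{lem:cvg:othn} explicitly attaches this condition to the bound $\sup_n\gamma_n^{-2}\bE(|\othn|^2\un_{\cdot})<\infty$, and Assumption~{\bf \ref{assum:clt:stepsize}} in Theorems~\ref{th:clt:drm}--\ref{th:clt:drm:aver} implies it). One further imprecision you share with the paper, and read correctly: both arguments really deliver the bound in limsup form, the $\cO(\rho^n\gamma_n^{-2})$ term accounting only for the initial condition, while the excess of the convolution term over $\rho C/(1-\sqrt\rho)^2$ is $o(1)$ rather than exponentially small.
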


\subsection{Central Limit Theorems}
\label{sec:TCL} We derive Central Limit Theorems for sequences $(\thn)_n$ and
$(\bthn)_n$ converging to a point $\un \otimes \theta_\star$ for some
$\theta_\star \in \cL$.  To that goal, we restrict our attention to the case
when the matrix $(W_n)_n$ are doubly stochastic i.e. $\un^T W_n = \un^T$. The
general case is far more technical and out of the scope of this paper. We also
assume that the point $\theta_\star$ and the r.v.  $\bsY$ satisfy
\begin{assum}
  \label{assum:clt:thetastar}
  \begin{enumerate}[a)]
  \item $\theta_\star \in \cL$.
  \item The mean field $h: \bR^d\to \bR^d$ given by (\ref{eq:meanfield}) is
    twice continuously differentiable in a neighborhood of $\theta_\star$.
  \item $\nabla h(\theta_\star)$ is a Hurwitz matrix i.e.  the largest real
    part of its eigenvalues is $-L$ for some $L>0$.
  \end{enumerate}
\end{assum}
\begin{assum}
  \label{assum:clt:Y}
  \begin{enumerate}[a)]
  \item There exist $\delta >0$ and $\tau>0$ such that $\sup_{|\bth - \un
      \otimes \theta_\star | \leq \delta }  \int  |\la \bs y \ra |^{2
        +\tau}  \mu_{\bth}(d \bs y)  < \infty$.
    \item The functions $\bth \mapsto \int \la \bs y \ra \la \bs y \ra^T
      \mu_{\bth}(d \bs y) $ and $\bth \mapsto \int \la \bs y \ra \mu_{\bth}(d
      \bs y) $ are continuous in a neighborhood of $\un \otimes \theta_\star$. 
  \end{enumerate}
\end{assum}
We finally strengthen the assumptions on the step-size sequence $(\gamma_n)_{n
  \geq 0}$.  In the sequel, notations $x_n = o(y_n)$ and $x_n\sim y_n$ stand
for $x_n/y_n\to 0$ and $x_n/y_n\to 1$ respectively.
\begin{assum}
  \label{assum:clt:stepsize}
  \begin{enumerate}[a)]
  \item \label{assum:clt:stepsize:a} $(\gamma_n)_n$ is a positive deterministic
    sequence such that either $\log(\gamma_k/\gamma_{k+1}) = o(\gamma_k)$, or
    $\log(\gamma_k/\gamma_{k+1}) \sim \gamma_k / \gamma_\star$ for some
    $\gamma_\star > 1/(2L)$.
  \item \label{assum:clt:stepsize:c} $\sum_n \gamma_n =  \infty$ and $\sum_n \gamma_n^2 < \infty$.
  \item \label{assum:clt:stepsize:b} $\lim_n n \gamma_n =+\infty$ and
    \begin{gather*}
      \lim_n \frac{1}{\sqrt{n}} \sum_{k=1}^n \gamma_k^{-1/2} \ \left|1 -
  \frac{\gamma_k}{\gamma_{k+1}} \right| =0 \\ \lim_n \frac{1}{\sqrt{n}} \sum_{k=1}^n \gamma_k = 0 \ .
    \end{gather*}
  \end{enumerate}
\end{assum}
The step size $\gamma_n \sim \gamma_\star /n^\xi$ satisfies Assumptions {\bf
  \ref{assum:clt:stepsize}\ref{assum:clt:stepsize:a}-\ref{assum:clt:stepsize:c})}
for any $1/2<\xi\leq 1$ since $\log (\gamma_k / \gamma_{k+1}) \sim \xi/k$.
Similarly, if $\gamma_n \sim \gamma_\star /n$, Assumption {\bf
  \ref{assum:clt:stepsize}\ref{assum:clt:stepsize:a})} holds provided that
$\gamma_\star > (1/2L)$. Observe that when the sequence $(\gamma_n)_n$ is
ultimately non-increasing, then the condition $\lim_n n \gamma_n = +\infty$
implies $\lim_n \sqrt{n}^{-1} \sum_{k=1}^n \gamma_k^{-1/2} \ \left|1 -
  (\gamma_k/\gamma_{k+1}) \right| =0$ (see e.g.~\cite[Theorem 26, Chapter
4]{delyon:2000}).
Set
\begin{multline*}
  \Upsilon \eqdef \int \la \bs y \ra \la \bs y \ra^T \ \mu_{\un
    \otimes \theta_\star}(d \bs y) \\ - \left(\int \la \bs y \ra \
    \mu_{\un \otimes \theta_\star}(d \bs y) \right)\left(\int \la \bs
    y \ra \ \mu_{\un \otimes \theta_\star}(d \bs y) \right)^T \ .
\end{multline*}
\begin{theo}
  \label{th:clt:drm} 
  Suppose Assumptions {\bf \ref{hyp:model}}, {\bf \ref{hyp:H}}, {\bf
    \ref{assum:clt:thetastar}, \ref{assum:clt:Y},
    \ref{assum:clt:stepsize}\ref{assum:clt:stepsize:a}-\ref{assum:clt:stepsize:c})}.
  Assume in addition that $\un^T W_n = \un^T$ w.p.1.  Then under the
  conditional probability $\bP(\cdot \vert \lim_k \thk = \un \otimes
  \theta_\star)$, the sequence of r.v.  $(\gamma_n^{-1/2} \ (\thn - \un \otimes
  \theta_\star))_{n \geq 0}$ converges in distribution to $\un \otimes Z$ where
  $Z$ is a centered Gaussian distribution with covariance matrix $\Sigma$
  solution of the Lyapunov equation:
 $$
 \nabla h(\theta_\star)  \Sigma + \Sigma \nabla h(\theta_\star)^T = -  \Upsilon 
$$
if $\log(\gamma_k /\gamma_{k+1}) = o(\gamma_k)$ and
$$
\left( I + 2 \gamma_\star \nabla h(\theta_\star)  \right) \Sigma + \Sigma \left( I + 2 \gamma_\star \nabla h(\theta_\star)^T \right) = -  \Upsilon
$$
if $\log(\gamma_k /\gamma_{k+1}) \sim \gamma_k/\gamma_\star$.
\end{theo}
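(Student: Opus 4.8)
The plan is to exploit the double stochasticity $\un^T W_n = \un^T$ to decouple the network average $\athn$ from the disagreement $\othn$, thereby reducing the statement to a classical Central Limit Theorem for a \emph{centralized} stochastic approximation recursion on $\bR^d$. I work throughout under the conditional probability $\bP(\cdot\,\vert\,\lim_k \thk = \un\otimes\theta_\star)$, on which the iterates are eventually confined to a bounded set, say $\sup_k|\thk|\leq M$. First I would split $\thn - \un\otimes\theta_\star = \un\otimes(\athn-\theta_\star) + \othn$ and show that the disagreement is negligible at the CLT scale: by Theorem~\ref{the:moment2bounded}, $\bE(|\othn|^2\un_{\sup_{k\leq n-1}|\thk|\leq M}) = \cO(\gamma_n^2)$, so $\gamma_n^{-1/2}\othn$ has $L^2$-norm of order $\gamma_n^{1/2}\to 0$ and hence converges to zero in probability. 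It then suffices to prove a CLT for $\gamma_n^{-1/2}(\athn-\theta_\star)$ and to conclude by Slutsky that $\gamma_n^{-1/2}(\thn-\un\otimes\theta_\star)$ converges in distribution to $\un\otimes Z$.

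Next I would clean up the average recursion \eqref{eq:SAmarkovnoise}. Since $\un^T W_n=\un^T$ gives $\la(W_n\otimes I_d)\bs x\ra=\la \bs x\ra$ for every $\bs x$, and since $\la\othnmu\ra=0$ because $\othnmu\in\mathrm{range}(\Jo)$, the recursion collapses to the decoupled scheme $\athn=\athnmu+\gamma_n\la\bsY_n\ra$, which is exactly a centralized Robbins--Monro scheme. Using Assumption~\ref{hyp:model}\ref{hyp:model:c}) I then write $\la\bsY_n\ra = h(\athnmu)+\zeta_n+e_n$, where $e_n\eqdef\la\bsY_n\ra-\bE[\la\bsY_n\ra\,\vert\,\cF_{n-1}]$ is a martingale increment and the perturbation $\zeta_n\eqdef\int\la\bs y\ra\,\mu_{\thnmu}(d\bs y)-h(\athnmu)$ obeys $|\zeta_n|\leq C\,|\othnmu|$ by the Lipschitz bound \eqref{eq:EHLipsch}.

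Finally I would apply a general stochastic approximation CLT (following \cite{fort:2011,pelletier:1998}) to the recursion $\athn=\athnmu+\gamma_n h(\athnmu)+\gamma_n e_n+\gamma_n\zeta_n$. The required hypotheses all hold: Assumption~\ref{assum:clt:thetastar} gives $h\in C^2$ near $\theta_\star$ with $\nabla h(\theta_\star)$ Hurwitz; Assumption~\ref{assum:clt:Y} supplies the uniform $(2+\tau)$-moment needed for the Lyapunov condition on $(e_n)$ and the continuity ensuring that the conditional covariance $\bE[e_ne_n^T\,\vert\,\cF_{n-1}]$ converges to $\Upsilon$; and Assumption~\ref{assum:clt:stepsize} provides the step-size conditions, whose dichotomy $\log(\gamma_k/\gamma_{k+1})=o(\gamma_k)$ versus $\log(\gamma_k/\gamma_{k+1})\sim\gamma_k/\gamma_\star$ is exactly what selects between the two Lyapunov equations, the extra $I$ appearing in the critical $\gamma_n\sim\gamma_\star/n$ regime as the usual interaction between the step-size rate and the $\gamma_n^{-1/2}$ normalization.

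The main obstacle is controlling the extra perturbation $\zeta_n$, which has no counterpart in a textbook SA recursion and encodes the feedback of the disagreement into the average dynamics. I must show that its accumulated contribution is negligible at scale $\gamma_n^{-1/2}$; this is precisely where the \emph{sharp} rate $\|\othnmu\|_{L^2}=\cO(\gamma_n)$ of Theorem~\ref{the:moment2bounded} is indispensable, since it forces $\zeta_n = o_{L^2}(\gamma_n^{1/2})$ in the relevant averaged sense and thus leaves the limiting Gaussian unchanged. Verifying this negligibility within the chosen general CLT framework, and checking that the step-size hypotheses of that framework are implied by Assumption~\ref{assum:clt:stepsize}, is the delicate part of the argument; once it is in place the limit $\un\otimes Z$ follows, with $Z$ the centered Gaussian limit of $\gamma_n^{-1/2}(\athn-\theta_\star)$ and covariance $\Sigma$ solving the appropriate Lyapunov equation.
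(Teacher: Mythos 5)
Your proposal follows essentially the same route as the paper's own proof: negligibility of $\gamma_n^{-1/2}\othn$ via Theorem~\ref{the:moment2bounded}, collapse of the averaged recursion to $\athn=\athnmu+\gamma_n\la\bsY_n\ra$ using $\un^T W_n=\un^T$, decomposition of the noise into a martingale increment $e_n$ plus a perturbation bounded by $C|\othnmu|$, and verification of the conditions of the general CLT of \cite{fort:2011} (also \cite{pelletier:1998}), with the perturbation killed at scale $\gamma_n^{-1/2}$ exactly as you describe. The argument is correct and matches the paper's proof in both structure and key estimates.
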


\medskip

The proof of Theorem~\ref{th:clt:drm} is postponed to
Appendix~\ref{proof:th:clt:drm}.  The asymptotic variance can be compared to the
asymptotic variance in a centralized algorithm: formally, such an algorithm is
obtained by setting $W_n = \un \un^T/N \otimes I_d$. Interestingly, the
distributed algorithm under study has the same asymptotic variance as its
centralized analogue.

Theorem~\ref{th:clt:drm} shows that when $\gamma_n \sim \gamma_\star /
n^\alpha$ for some $\alpha \in (1/2, 1]$, then the rate in the CLT is
$\cO(1/n^{\alpha/2})$.  Therefore, the maximal rate of convergence is achieved
with $\gamma_n \sim \gamma_\star / n$ and in this case, the rate is
$\cO(1/\sqrt{n})$.  Unfortunately, the use of such a rate necessitates to choose
$\gamma_\star$ as a function of $\nabla h(\theta_\star)$ (through the upper
bound $L$, see Assumption~{\bf
  \ref{assum:clt:stepsize}\ref{assum:clt:stepsize:a})}), and in practice
$\nabla h(\theta_\star)$ is unknown. We will show in
Theorem~\ref{th:clt:drm:aver} that the optimal rate $O(1/\sqrt{n})$ can be
reached by applying the averaged procedure (\ref{eq:algo:aver}) with $\gamma_n
\sim \gamma_\star / n^{\alpha}$ whatever $\alpha \in (1/2, 1)$.

A second question is the scaling of the observations in the local step. Observe
that during each local step of the algorithm (see (\ref{eq:tempupdate})), each
agent can use a common invertible matrix gain $\Gamma$ and update the temporary
iterate $\tilde \theta_{n,i}$ as
\begin{equation}
\label{eq:matricegain}
\tilde \theta_{n,i} = \theta_{n-1,i} + \gamma_{n}\, \Gamma Y_{n,i}\ .
\end{equation}
It is readily seen that the new mean field $\tilde h: \theta \mapsto \int \la
(\Gamma \otimes I_N)\bs y \ra \mu_{\un \otimes \theta}(d \bs y)$ is equal to
$\Gamma h$ and Assumptions~{\bf \ref{hyp:V}} and {\bf \ref{hyp:H}} remain
valid with $(\bsY, h, V)$ replaced by $((\Gamma \otimes I_N) \bsY, \Gamma h,
\Gamma^{-1} V)$.  Therefore, introducing a gain matrix $\Gamma $ does not
change the limiting points of the algorithm~(\ref{eq:algo}) (and thus
(\ref{eq:algo:aver})) but changes the asymptotic variance. In the case of the
optimal rate in Theorem~\ref{th:clt:drm} (i.e. the case $\gamma_n \sim
\gamma_\star /n$ for some $\gamma_\star > 1/(2 L)$), it can be proved following
the same lines as in \cite{fort:2011} (see also \cite[Proposition 4, Chapter 3,
Part I]{benveniste:metivier:priouret:1987}), that the {\em optimal} choice of
the gain matrix is $\Gamma_\star = -\gamma_\star^{-1} \nabla
h(\theta_\star)^{-1}$.  By optimal, we mean that, when weighting the
observations by $\Gamma_\star$ as in~(\ref{eq:matricegain}), the asymptotic
covariance matrix $\Sigma_\star$ obtained through Theorem~\ref{th:clt:drm} is
smaller than the limiting covariance $\Sigma_\Gamma$ associated with any other
gain matrix $\Gamma$ \emph{i.e.}, $\Sigma_\Gamma-\Sigma_\star$ is nonnegative.
Moreover, $\Sigma_\star$ is equal to:
\[
\gamma_\star^{-1} \ \nabla h(\theta_\star)^{-1} \Upsilon \nabla h(\theta_\star)^{-T} \ .
\]
Otherwise stated,  $(\sqrt{n} \ (\athn - \theta_\star))_{n \geq 0}$ converges to 
a centered Gaussian vector with covariance matrix
$\nabla h(\theta_\star)^{-1} \Upsilon \nabla h(\theta_\star)^{-T}$.

In practice, $\nabla h(\theta_\star)$ is unknown and such a
choice of gain matrix cannot be plugged in the algorithm~(\ref{eq:algo}).
Fortunately, Theorem~\ref{th:clt:drm:aver} shows that this optimal variance can be reached
by averaging the sequence $(\bthn)_n$.

Note that these two major features of {\em averaging algorithms} for stochastic approximation
(optimal convergence rate and optimal limiting covariance matrix)
has been pointed out by \cite{polyak:1990} (see also \cite{polyak:juditsky:1992})
in case of centralized algorithms.
\begin{theo}
  \label{th:clt:drm:aver} 
  Let $(\gamma_n)_n$ be a deterministic positive sequence such that $
  \log(\gamma_k/\gamma_{k+1}) = o(\gamma_k)$. Suppose Assumptions {\bf
    \ref{hyp:model}}, {\bf \ref{hyp:H}}, {\bf \ref{assum:clt:thetastar}}, {\bf
    \ref{assum:clt:Y}}, {\bf
    \ref{assum:clt:stepsize}\ref{assum:clt:stepsize:c}-\ref{assum:clt:stepsize:b})}.
  Assume in addition that $\un^T W_n = \un^T$ w.p.1. Then under the conditional
  probability $\bP(\cdot \vert \lim_k \thk = \un \otimes \theta_\star)$, the
  sequence of r.v.  $(\sqrt{n} \ (\bthn - \un \otimes \theta_\star))_{n \geq
    0}$ converges in distribution to $\un \otimes \bar Z$ where $\bar Z$ is a
  centered Gaussian distribution with covariance matrix
  \[
  \nabla h(\theta_\star)^{-1} \ \Upsilon \nabla h(\theta_\star)^{-T} \ .
\]
\end{theo}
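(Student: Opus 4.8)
The plan is to reduce the statement to the classical Polyak--Ruppert averaging theorem applied to the $\bR^d$-valued recursion satisfied by the mean $\athn$, after showing that the disagreement never contributes at the $\sqrt n$ scale. Throughout I work under the conditional law $\bP(\cdot \mid \lim_k \thk = \un \otimes \theta_\star)$ and localize on the events $\{\sup_k |\thk| \leq M\}$, whose probabilities tend to one as $M \to \infty$; this is what lets me invoke the $L^2$ bounds of Theorem~\ref{the:moment2bounded}. First I would split
\[
\bthn - \un \otimes \theta_\star
= \un \otimes \Big( \tfrac1n \sum_{k=1}^n (\athk - \theta_\star) \Big)
+ \tfrac1n \sum_{k=1}^n \othk \ .
\]
The disagreement term is negligible: Theorem~\ref{the:moment2bounded} gives $\bE(|\othk|^2 \un_{\{\sup_j |\thj| \leq M\}}) = \cO(\gamma_k^2)$, hence $\bE|\othk| = \cO(\gamma_k)$ on the localized event, and the condition $\lim_n n^{-1/2} \sum_{k=1}^n \gamma_k = 0$ from Assumption~{\bf \ref{assum:clt:stepsize}\ref{assum:clt:stepsize:b})} yields $n^{-1/2} \sum_{k=1}^n \othk \plim 0$. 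It therefore remains to establish the CLT for $\sqrt n\,(\bar a_n - \theta_\star)$, where $\bar a_n \eqdef \la \bthn \ra = n^{-1} \sum_{k=1}^n \athk$.

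Next I would exploit double stochasticity. Since $\un^T W_n = \un^T$, the averaging map annihilates $W_n$, i.e. $\la (W_n \otimes I_d) \bs x \ra = \la \bs x \ra$, and moreover $\la \othnmu \ra = 0$. Hence \eqref{eq:SAmarkovnoise} collapses to the standard scheme $\athn = \athnmu + \gamma_n \la \bsY_n \ra$. Writing $\la \bsY_n \ra = h(\athnmu) + r_n + \eta_n$ with $\eta_n \eqdef \la \bsY_n \ra - \bE[\la \bsY_n \ra \mid \cF_{n-1}]$ a martingale increment (using Assumption~{\bf \ref{hyp:model}\ref{hyp:model:c})}) and $r_n \eqdef \int \la \bs y \ra \mu_{\thnmu}(d\bs y) - h(\athnmu)$ a bias term, Assumption~{\bf \ref{hyp:H}\ref{hyp:H:abis})} gives $|r_n| \leq C_M |\othnmu|$, so $\bE|r_n| = \cO(\gamma_{n-1})$ on the localized event. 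The recursion for $\athn$ is thus exactly a finite-dimensional stochastic approximation with martingale noise and an asymptotically negligible state-dependent bias, which is the setting of the averaged CLT of \cite{fort:2011,pelletier:1998}.

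Third, I would run the averaging argument explicitly to identify the limit. Set $A \eqdef \nabla h(\theta_\star)$, which is invertible since it is Hurwitz (Assumption~{\bf \ref{assum:clt:thetastar}}), and $u_n \eqdef \athn - \theta_\star$. Twice differentiability of $h$ at $\theta_\star$ gives $h(\athnmu) = A u_{n-1} + q_n$ with $|q_n| = \cO(|u_{n-1}|^2)$, so $u_n - u_{n-1} = \gamma_n(A u_{n-1} + \eta_n + r_n + q_n)$. Solving for $A u_{n-1}$, summing over $k \leq n$, and applying Abel summation to $\sum_k \gamma_k^{-1}(u_k - u_{k-1})$ leads to
\[
\sqrt n \, A \, \bar u_n = - \frac{1}{\sqrt n} \sum_{k=1}^n \eta_k + R_n \ ,
\qquad \bar u_n \eqdef \frac1n \sum_{k=1}^n u_{k-1} \ ,
\]
where the remainder $R_n$ gathers: the boundary term $u_n / (\sqrt n\, \gamma_n)$, which vanishes because $\bE|u_n| = \cO(\sqrt{\gamma_n})$ (the localized $L^2$ rate underlying Theorem~\ref{th:clt:drm}) and $n \gamma_n \to \infty$; the step-variation term $n^{-1/2} \sum_k \gamma_k^{-1} |1 - \gamma_k/\gamma_{k+1}|\,|u_k|$, which vanishes by the first limit of Assumption~{\bf \ref{assum:clt:stepsize}\ref{assum:clt:stepsize:b})}; and the bias/curvature contribution of order $n^{-1/2} \sum_k (\gamma_{k-1} + \bE|u_{k-1}|^2) = \cO(n^{-1/2} \sum_k \gamma_k)$, which vanishes by the second limit of the same assumption. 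Since $\bar u_n$ and $\bar a_n - \theta_\star$ differ by $n^{-1}(u_n - u_0)$, the two share the same $\sqrt n$-limit.

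Finally, a martingale central limit theorem for $n^{-1/2} \sum_{k=1}^n \eta_k$ — whose Lindeberg condition follows from the $(2+\tau)$-moment bound of Assumption~{\bf \ref{assum:clt:Y}}a) and whose conditional covariance converges to $\Upsilon$ by the continuity assumption~{\bf \ref{assum:clt:Y}}b) — gives $n^{-1/2} \sum_k \eta_k \dlim \cN(0, \Upsilon)$. Combining with the displayed identity yields $\sqrt n\,\bar u_n \dlim \cN(0, A^{-1} \Upsilon A^{-T})$, and transporting the limit through $\sqrt n(\bthn - \un \otimes \theta_\star) = \un \otimes \sqrt n(\bar a_n - \theta_\star) + o_{\bP}(1)$ proves the claim. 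I expect the main obstacle to be the rigorous, localized control of $R_n$ on the random-limit event $\{\lim_k \thk = \un \otimes \theta_\star\}$: because $\theta_\star$ is random and the observations are only controlled-Markov (conditional law $\mu_{\thnmu}$, not i.i.d.), one cannot invoke a stationary CLT directly but must patch together the localization device used for Theorem~\ref{th:clt:drm}, the $L^2$ disagreement rate of Theorem~\ref{the:moment2bounded}, and the verification of the non-stationary martingale CLT hypotheses — precisely the role played by the results of \cite{fort:2011}.
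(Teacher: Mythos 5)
Your proposal is correct and takes essentially the same approach as the paper: the same splitting of $\bthn$ into consensus and disagreement parts, the same use of Theorem~\ref{the:moment2bounded} together with $\lim_n n^{-1/2}\sum_{k=1}^n \gamma_k = 0$ to discard the time-averaged disagreement, and the same reduction to an averaged CLT for the $\bR^d$-valued recursion $\athn = \athnmu + \gamma_n \la \bsY_n \ra$ with martingale noise and a bias bounded by $C\,|\othnmu|$. The only difference is packaging: where you unroll the Polyak--Ruppert computation (Abel summation, boundary/step-variation/bias remainders, martingale CLT), the paper invokes \cite[Theorem 2]{fort:2011}, restated as Theorem~\ref{theo:rappel:TCLaver}, and verifies its conditions AVER\ref{hyp:AVER1}--AVER\ref{hyp:AVER3} --- exactly the ingredients you check, including the localized moment control on the random limit event that you rightly flag as the main technical obstacle and that this citation resolves.
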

The proof of Theorem~\ref{th:clt:drm:aver} is postponed to
Appendix~\ref{proof:th:clt:drm:aver}.

\section{An Application Framework} 
\label{sec:appli}

\subsection{Distributed estimation}
\label{subsec:ml-distributed} 
To illustrate the results, we describe in this section a distributed 
parameter estimation algorithm which converges to a limit point of the 
centralized Maximum Likelihood (ML) estimator. 
Assume that node $i$ receives at time $n$ the $\bR^{m_i}$-valued component 
$X_{n,i}$ of the i.i.d.~random process 
${\bs X}_n = (X_{n,1}^T, \ldots X_{n,N}^T )^T \in \bR^{\sum m_i}$, where 
${\bs X}_1$ has the unknown density $f_*(x)$ with respect to the Lebesgue 
measure. The system designer considers that the density of ${\bs X}_1$ belongs 
to a family $\{ f(\theta, \bs x) \}_{\theta\in \bR^d}$. 
When $f(\theta,\bs x)$ satisfies some regularity and smoothness conditions, 
the limit points of the sequences $\hat\theta_n$ that maximize the 
log-likelihood
function $L_n(\theta) = \sum_{k=1}^n \log f(\theta, \bs X_k)$ are minimizers of
the Kullback-Leibler divergence $D( f_* \, \| \, f(\theta, \cdot) )$ 
\cite{lehmann-casella:point}. Our aim is to design a distributed 
and iterative algorithm that exhibits the same asymptotic behavior in the case 
where $f(\theta,\bs x)$ is of the form 
$f(\theta,\bs x) = \prod_{i=1}^N f_i(\theta, x_i)$ where 
$\bs x  = ( x_1^T, \ldots, x_N^T)^T$ is partitioned similarly to 
${\bs X_1}$.  
To that purpose, Algorithm \eqref{eq:algo} is implemented with the increments
$
Y_{n+1,i} = \nabla_\theta \log f_i\left(\theta_{n,i}, X_{n+1,i}\right) 
$
where $\nabla_\theta$ is the gradient with respect to $\theta$.  
In some sense, $\log f_i(\theta_{n,i}, X_{n+1,i})$ is a local
log-likelihood function that is updated by node $i$ at time $n+1$ by a 
gradient approach. Writing 
${\bs\theta} = ( \theta_1^T, \ldots, \theta_N^T)^T$, the distribution 
$\mu_{\bs\theta}$ introduced in Section \ref{subsec:model} 
is defined by the identity 
\begin{multline*}
  \int g(\bs y) \mu_{\bs \theta}(d \bs y) = \int g\!\big(
  (\nabla_\theta\log f_1(\theta_1, x_1)^T, \dots\ \ \ \ \ \\  \dots,\nabla_\theta\log
  f_N(\theta_N, x_N)^T)^T \big) \ f_*(\bs x) \, d\bs x
\end{multline*}
for every measurable function $g : \bR^{Nd} \to \bR_+$. The associated
mean field given by Equation \eqref{eq:meanfield} will be 
\[
h(\theta) = 
\frac 1N \int \nabla_\theta \log f(\theta, \bs x) \, 
f_*(\bs x) \, d\bs x  . 
\]
Since $h(\theta) = - N^{-1} \nabla_\theta D( f_* \, \| \, f(\theta, \cdot) )$ 
(assuming $\nabla_\theta$ and $\int$ can be interchanged), our algorithm is of
a gradient type with $V(\theta) = D( f_* \, \| \, f(\theta, \cdot) )$ as 
the natural Lyapunov function. Under the assumptions of Theorem \ref{the:cv} 
or Theorem \ref{the:cv:vanish}, we know that the $\theta_{n,i}, i=1,\ldots,N$ 
converge 
unanimously to ${\mathcal L} = \{ \theta \, : \, \nabla V(\theta) = 0 \}$. 
Here, we note that under some weak extra assumptions on the ``noise'' 
of the algorithm, it is possible to show that unstable points such as local 
maxima or saddle points of $V(\theta)$ are avoided (see for instance 
\cite{brandiere-duflo:1996, benaim:cours99, fang-chen:2000}). Consequently,
the first order behavior of the distributed algorithm is identical to that of
the centralized ML algorithm. \\
We now consider the second order behavior of these algorithms, restricting 
ourselves to the case where 
$f_*(\bs x) = \prod_{i=1}^N f_i(\theta_\star, x_i)$ for some 
$\theta_\star \in \bR^d$.
With some conditions on $f_*$, it is well known that any consistent sequence
$\hat\theta_n$ of estimates provided by the centralized ML algorithm satisfies 
$\sqrt{n} ( \hat\theta_n - \theta_\star ) \dlim 
{\mathcal N}(0, F(\theta_\star)^{-1})$ where $\dlim$ stands for the convergence in distribution,
${\mathcal N}(0,\Sigma)$ represents the centered Gaussian distribution with covariance $\Sigma$ and
\[
F(\theta_\star) = \sum_{i=1}^N \int 
\nabla_\theta \log f_i(\theta_\star, x_i) \, 
\nabla_\theta \log f_i(\theta_\star, x_i)^T \, f_i(\theta_\star, x_i) \, dx_i 
\] 
is the Fisher information matrix of $f(\theta_\star, \cdot )$ 
\cite[Chap. 6]{lehmann-casella:point}. We now turn to the distributed 
algorithm and to that end, we apply Theorems \ref{th:clt:drm} 
and \ref{th:clt:drm:aver}. Matrices $\nabla h(\theta_\star)$ and $\Upsilon$ 
found in the statements of these 
theorems coincide in our case with $-N^{-1} F(\theta_\star)$ and 
$N^{-2} F(\theta_\star)$ respectively (same value of $\Upsilon$ for both 
theorems). Starting with the averaged case, Theorem \ref{th:clt:drm:aver} 
shows that on the set 
$\{\lim_n \bth_n = \un \otimes \theta_\star \}$, the averaged sequence 
$\bar{\bth}_n$ satisfies
$\sqrt{n} ( \bar\bth_n - \un \otimes \theta_\star ) \dlim \un \otimes Z$ where 
$Z \sim {\mathcal N}(0, F(\theta_\star)^{-1})$.  
This implies that the averaged algorithm is asymptotically efficient, similarly
to the centralized ML algorithm. Let us consider the non averaged algorithm.
In order to make a fair comparison with the centralized ML algorithm, we 
restrict the use of Theorem \ref{th:clt:drm} to the case where $\gamma_n$ 
has the form $\gamma_n = \gamma_\star/n$. In that case, Assumption 
{\bf \ref{assum:clt:stepsize}} is verified when 
$\gamma_\star > N / (2 \lambda_{\min}(F(\theta_\star)))$ where 
$\lambda_{\min}(F(\theta_\star))$ is the smallest eigenvalue of 
$F(\theta_\star)$. Theorem \ref{th:clt:drm} shows that on the set 
$\{\lim_n \bs\theta_n = \un \otimes \theta_\star \}$, the sequence of 
estimates ${\bth}_n$ satisfies 
$\sqrt{n} ( \bth_n - \un \otimes \theta_\star ) \dlim \un \otimes Z$ where
$Z \sim {\mathcal N}(0, \Sigma)$, and where $\Sigma$ is the solution of the
matrix equation 
$\Sigma (2 N^{-1} \gamma_{\star} F(\theta_\star) - I_d )+
(2 N^{-1} \gamma_{\star} F(\theta_\star) - I_d ) \Sigma 
=2 \gamma_{\star}^2 N^{-2} F(\theta_\star)$. Solving this equation, we obtain
$\Sigma = \gamma_{\star}^2 N^{-2} F(\theta_\star) (2 \gamma_\star N^{-1} 
F(\theta_\star) - I_d)^{-1}$. Notice that 
$\Sigma - F(\theta_\star)^{-1} = F(\theta_\star)^{-1} (2 \gamma_\star N^{-1} 
F(\theta_\star) - I_d)^{-1} ( \gamma_\star N^{-1} F(\theta_\star) - I_d)^2
> 0$, which quantifies the departure from asymptotic efficiency of the non 
averaged algorithm. 

\subsection{Application to source localization}
\label{subsec:localisation}

The distributed algorithm described above is used here to localize a source by
a collection of $N = 40$ sensors. The unknown location of the source in the
plane is represented by a parameter $\theta_\star \in \bR^2$. The sensors are
located in the square $[0, 50] \times [0, 50]$ as shown by
Figure~\ref{fig:EusipcoNet}, and they receive scalar-valued signals from the
source ($m_i = 1$ for all $i$). It is assumed that the density of $\bs X_1 \in
\bR^N$ is $f_\star(\bs x) = \prod_{i=1}^N f_i(\theta_\star, x_i)$ where
$f_i(\theta_\star, \cdot) = {\mathcal N}( 1000 / | \theta_\star - r_i |^2,
10^{-2})$ where $r_i \in \bR^2$ is the location of Node $i$. The fitted model
is $f(\theta, \bs x) = \prod_{i=1}^N f_i(\theta, x_i)$ with $f_i(\theta, \cdot)
= {\mathcal N}( 1000/| \theta - r_i |^2, 10^{-2})$ (see
\cite{rabbat:nowak:2004} for a similar model). The model for matrices $W_n$ is
the pairwise gossip model described in Section \ref{sec:gossip}.  The step
sequence $\gamma_n$ is set to $10^{-3}/n^{0.7}$.
Note that in practice, setting adequately the step size in order to find the sought tradeoff between a short transient phase and a good asymptotic accuracy is known to
be sensitive to the statistical model of interest. 
Finally, the initial value
$\bth_0 \in \bR^{2N}$ is chosen at random under the uniform distribution on the
square $[0, 50] \times [0,50]$.

The convergence of the distributed algorithm to the consensus subspace is
illustrated in Figures~\ref{fig:EusipcoAverageError}. Figure~\ref{fig:histog}
represents the empirical distribution of the normalized estimation error
$\gamma_n^{-1/2}(\athn-\theta_\star)$ after $n=50\,000$ iterations, based on 180 Monte-Carlo runs
of the trajectory $\bthn$ initialized in the vincinity of $\theta_\star$. The empirical distribution
is coherent with the asymptotic Gaussian distribution given by Theorem~\ref{th:clt:drm}.


\appendix
\subsection{Notations}
For a positive deterministic sequence $(a_n)_{n\geq 1}$, the notation 
$x_n = o(a_n)$ refers to a deterministic $\bR^\ell$-valued sequence 
$(x_n)_{n \geq 1}$ such that $\lim_{n \to \infty} a_n^{-1}|x_n| =0$. For 
$p>0$, we denote the $L^p$-norm of a random vector $X$ by 
$\| X\|_p \eqdef \bE( |X|^p )^{1/p}$. The notation $X_n = o_{L^p}(a_n)$ refers
to a $\bR^\ell$-valued r.v. $(X_n)_{n \geq 1}$ such that 
$\lim_{n \to \infty} a_n^{-1}\|X_n\|_p =0$, while 
$X_n = \mathcal{O}_{L^p}(a_n)$ refers to a $\bR^\ell$-valued r.v. 
$(X_n)_{n \geq 1}$ such that $\limsup_{n} a_n^{-1}\|X_n\|_p < \infty$. 
Finally, $X_n = \mathcal{O}_{w.p.1.}(a_n)$ stands for any
$\bR^\ell$-valued r.v. $(X_n)_{n \geq 1}$ such that $\limsup_{n} a_n^{-1}|X_n|$
is finite almost-surely.

\subsection{Proof of Theorems~\ref{the:cv} and ~\ref{the:cv:vanish}}
\label{sec:proof:the:cv:vanishingrate}
We give the proof of Theorem~\ref{the:cv:vanish}; the proof of
Theorem~\ref{the:cv} is on the same lines and details are omitted. We first
prove the almost-sure convergence to zero of $(\othn)_{n \geq 1}$. The
assumption $\PP{\limsup_n |\thn| < \infty}=1$ implies $\PP{\bigcup_{M \in
    \mathbb{Z}_+} \{\sup_n |\bth_n| \leq M \}} =1$ and we only have to prove
that for any $M >0$, with probability one, $ \lim_n \othn \un_{\sup_n |\thn|
  \leq M} = 0$.  To that goal, we write for any $\delta >0$, $m \geq 1$,
\begin{align*}
  \bP\big\{&\sup_{n \geq m} |\othn|  \un_{\sup_n |\thn| \leq M} \geq \delta\big\}  \\  &\leq
  \frac{1}{\delta^2} \bE\left( \sup_{n \geq m} |\othn|² \un_{\sup_n |\thn| \leq
      M}\right)  \\
  & \leq \frac{1}{\delta^2} \sum_{n \geq m} 
      n^{-2\alpha} \sup_n \bE\left( n^{2\alpha} |\othn|² 
                 \un_{\sup_{k \leq n-1} |\thk| \leq M}\right) \ .
\end{align*}
Lemma~\ref{lem:cvg:othn} and Assumption~{\bf \ref{hyp:vanish}} imply that
$(\othn)_{n\geq 1}$ converges to zero w.p.1. on the set $\{ \sup_n |\thn| \leq
M \}$.
\begin{lemma}
  \label{lem:cvg:othn}
  Suppose Assumptions {\bf
    \ref{hyp:model}\ref{hyp:model:a}-\ref{hyp:model:c})}, {\bf \ref{hyp:step}},
  {\bf \ref{hyp:H}\ref{hyp:H:a})} and {\bf \ref{hyp:vanish}}. Then for any $M
  >0$,
\[
\sup_n n^{2\alpha} \bE\left( |\othn|^2 \, \un_{\sup_{k \leq n-1} |\thk| \leq
    M} \right) < \infty \ .
\]
\end{lemma}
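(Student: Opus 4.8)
The plan is to turn the disagreement recursion into a scalar recursion for $a_n \eqdef \bE\big(|\othn|^2 \un_{\sup_{k \le n-1}|\thk| \le M}\big)$ and to close it by a Chung-type induction. Starting from the algorithm \eqref{eq:algo}, I would apply $\Jo$ and use row stochasticity $W_n\un=\un$ to kill the consensus component; since $\Jo(W_n\otimes I_d)$ annihilates everything lying in the agreement subspace, this yields $\othn = \Jo(W_n\otimes I_d)(\othnmu + \gamma_n \Jo\bsYn)$. Writing $\Jo = P \otimes I_d$ with $P = I_N - \un\un^T/N$ a symmetric projector, one gets $|\othn|^2 = \zeta^T\big((W_n^T P W_n)\otimes I_d\big)\zeta$ with $\zeta \eqdef \othnmu + \gamma_n\Jo\bsYn$.

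The first key step is a one-step contraction in conditional $L^2$. Conditioning on $(\cF_{n-1},\bsYn)$ and using Assumption {\bf \ref{hyp:model}\ref{hyp:model:c})}, which makes $W_n$ independent both of the past and of $\bsYn$, I would replace $W_n^T P W_n$ by its expectation, whose spectral norm is $\rho_n$ by definition, giving $\bE[|\othn|^2 \mid \cF_{n-1}] \le \rho_n\, \bE[|\zeta|^2 \mid \cF_{n-1}]$. Expanding, $\bE[|\zeta|^2\mid\cF_{n-1}] = |\othnmu|^2 + 2\gamma_n \othnmu^T\Jo\, \bE[\bsYn\mid\cF_{n-1}] + \gamma_n^2\, \bE[|\Jo\bsYn|^2\mid\cF_{n-1}]$, where by Assumption {\bf \ref{hyp:model}\ref{hyp:model:c})} the conditional law of $\bsYn$ is $\mu_{\thnmu}$; on the event $\{\sup_{k\le n-1}|\thk|\le M\}$, Assumption {\bf \ref{hyp:H}\ref{hyp:H:a})} bounds both $\int|\bs y|^2\mu_{\thnmu}(d\bs y)$ and $|\int \bs y\,\mu_{\thnmu}(d\bs y)|$ by a constant $K_M$. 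Treating the cross term with Young's inequality and the free parameter $\epsilon_n \eqdef (1-\rho_n)/(2\rho_n)$, so that $\rho_n(1+\epsilon_n) = 1 - (1-\rho_n)/2$ and $\rho_n(1+\epsilon_n^{-1}) = \cO(1/(1-\rho_n))$, and using the nesting $\un_{\sup_{k\le n-1}|\thk|\le M} \le \un_{\sup_{k\le n-2}|\thk|\le M}$ to carry the indicator through the expectation, I obtain the scalar recursion $a_n \le (1-\delta_n) a_{n-1} + b_n$ with $\delta_n = (1-\rho_n)/2$ and $b_n = \cO(\gamma_n^2/(1-\rho_n))$.

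It then remains to solve this recursion, and Assumption {\bf \ref{hyp:vanish}} is exactly tailored for this: \eqref{eq:condrho} gives $1-\rho_n \ge c\, n^\alpha\gamma_n$ for large $n$, hence $\delta_n \ge (c/2) n^\alpha\gamma_n$ and $b_n = \cO(\gamma_n/n^\alpha)$, so that $b_n/\delta_n = \cO(n^{-2\alpha})$. I would prove $a_n \le C^\star n^{-2\alpha}$ by induction: assuming it at $n-1$, the step reduces to checking $b_n \le C^\star\bigl[n^{-2\alpha} - (n-1)^{-2\alpha} + \delta_n (n-1)^{-2\alpha}\bigr]$; since $n^{-2\alpha}-(n-1)^{-2\alpha} = -2\alpha\, n^{-2\alpha-1} + \cO(n^{-2\alpha-2})$, the bracket is $\ge C^\star n^{-2\alpha}(\delta_n - 2\alpha/n)$ up to lower order, and because $\lim_n n^{1+\alpha}\gamma_n = +\infty$ (from \eqref{eq:condgamma}) we have $n^\alpha\gamma_n \gg 1/n$, so $\delta_n - 2\alpha/n \ge (c/4) n^\alpha\gamma_n$ eventually. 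The required inequality then becomes $\cO(\gamma_n/n^\alpha) \le C^\star (c/4)\, n^{-\alpha}\gamma_n$, which holds once $C^\star$ is chosen large enough; the finitely many initial terms are controlled using $\bE|\bth_0|^2 < \infty$ together with $K_M$, which makes every $a_n$ finite and provides the base of the induction.

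I expect the main obstacle to be the bookkeeping in the second step, namely pinning down the $n$-dependent Young parameter $\epsilon_n$ so that the contraction factor becomes $1-(1-\rho_n)/2$ while the noise term blows up no faster than $1/(1-\rho_n)$, and then, in the third step, verifying that the target rate $n^{-2\alpha}$ is compatible with the contraction, i.e. that the connectivity decay $n^\alpha\gamma_n$ dominates the polynomial rate $1/n$. This compatibility is precisely where both conditions \eqref{eq:condgamma} and \eqref{eq:condrho} of Assumption {\bf \ref{hyp:vanish}} enter.
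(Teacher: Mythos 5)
Your proposal is correct and follows essentially the same route as the paper: the one-step conditional contraction $\bE[|\othn|^2\mid\cF_{n-1}]\leq\rho_n\,\bE[|\othnmu+\gamma_n\bsY_n|^2\mid\cF_{n-1}]$ obtained from Assumptions {\bf \ref{hyp:model}\ref{hyp:model:a}-\ref{hyp:model:c})}, the moment bound from {\bf \ref{hyp:H}\ref{hyp:H:a})} on the event $\{\sup_{k\leq n-1}|\thk|\leq M\}$ with the indicator nesting, and then a weighted induction at rate $n^{2\alpha}$ driven by (\ref{eq:condrho}) and (\ref{eq:condgamma}). The only (immaterial) difference is how the scalar recursion is closed: you linearize the cross term via Young's inequality with the $n$-dependent parameter $\epsilon_n$ and run a direct Chung-type induction on $a_n\leq C^\star n^{-2\alpha}$, whereas the paper keeps the $\sqrt{u_{n-1}}$ term and invokes the normalized-sequence induction of Lemma~\ref{lem:suites} (Eq.~(\ref{eq:majun})) with $\phi_n=n^{2\alpha}$.
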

\begin{proof} Fix $M>0$. Recalling that $(A\otimes B) (C\otimes D) = 
(AC) \otimes (BD)$, let 
  ${\mathcal W}_n = 
  (W_{n}^T \otimes I_d) \Jo (W_{n}\otimes I_d) = (W_{n}^T ( I - N^{-1}
  \un\un^T) W_{n} ) \otimes I_d$.  Since $\othn = \Jo (W_{n} \otimes I_d) (
  \othnmu + \gamma_{n} \bsY_{n} )$, we have by Assumptions~{\bf
    \ref{hyp:model}\ref{hyp:model:a}-\ref{hyp:model:c})}
\begin{align*} 
\bE\big[ &|\othn|^2 \vert \mathcal{F}_{n-1} \big]\\
&= 
\bE\left[ ( \othnmu + \gamma_{n} \Jo \bsY_{n} )^T {\mathcal W}_{n} 
( \othnmu + \gamma_{n} \bsY_{n} ) \, \vert \mathcal{F}_{n-1} \right] \\
&\leq \rho_n \bE \left[ |  \othnmu + \gamma_{n} \bsY_{n} |^2 
 \, \vert \mathcal{F}_{n-1} \right] \\
&\leq \rho_n \bigg( 
| \othnmu |^2 + \gamma_{n}^2 \int |\bs y|^2 \mu_{\thnmu}(d \bs y)  \\
& \quad \quad+ 2 \gamma_{n} | \othnmu | 
\big( \int |\bs y|^2 \mu_{\thnmu}(d \bs y) \big)^{1/2} \bigg) 
\end{align*} 
By Assumption~{\bf \ref{hyp:H}\ref{hyp:H:a})}, 
\[
\sup_n \int |\bs y|^2 \mu_{\thnmu}(d \bs y) \un_{\sup_{k \leq n} |\thk| \leq M} 
 < \infty \ .
\]
This implies that there exists a constant $C > 0$ such that 
\[
\bE\left[ |\othn|^2 \vert \mathcal{F}_{n-1} \right] \leq \rho_n |\othnmu |^2 +
\gamma_n^{2} C + 2 \gamma_n | \othnmu| \sqrt{C} \ .
\]
Therefore,
\begin{align*}
  \bE\big[ &|\othn|^2 \un_{\sup_{k \leq n-1} |\bs \theta_k| \leq M} \big]\\
 & \leq \rho_n \bE\left[|\othnmu |^2 \un_{\sup_{k \leq n-2} |\bs \theta_k| \leq
      M}
  \right] + \gamma_n^{2} C   \\
  &+ 2 \gamma_n \left(C \, \bE\left[|\othnmu |^2 \un_{\sup_{k \leq n-2} |\bs
        \theta_k| \leq M} \right] \right)^{1/2} \ .
\end{align*}
The proof now follows the same lines as in the proof of \cite[Lemma 1 (see Eq.
(17))]{bianchi:jakubowicz:VT:2011} (see also Lemma~\ref{lem:suites} below,
Eq.~(\ref{eq:majun}))
\end{proof}
\begin{remark}
  When Assumption~{\bf \ref{hyp:vanish}} is replaced with Assumption~{\bf
    \ref{hyp:model}\ref{hyp:model:iid})} and the condition $\lim_n
  \gamma_n/\gamma_{n-1} =1$, then for any $\bar \rho \in (\rho, 1)$ there
  exists a constant $C$ such that
\begin{multline*}
  \bE\left[ \gamma_n^{-2}|\othn|^2 \un_{\sup_{k \leq n-1} |\bs
      \theta_k| \leq M} \right] \leq \\ \bar \rho
  \bE\left[\gamma_{n-1}^{-2}|\othnmu |^2 \un_{\sup_{k \leq n-2} |\bs
      \theta_k| \leq M} \right] + C \ .
\end{multline*}
Therefore, Lemma~\ref{lem:cvg:othn} gets into
\[
\sup_n \gamma_n^{-2} \bE\left( |\othn|^2 \, \un_{\sup_{k \leq n-1} |\thk| \leq
    M} \right) < \infty \ ;
\]
(see also Theorem~\ref{the:moment2bounded} for a proof of this bound).
\end{remark}
Now, the study of the whole vector $\thn$ is reduced to the analysis of its
projection $J \thn= \un \otimes \athn$ onto the consensus space. We now focus
on the average $\athn$. The convergence of the sequence $(\athn)_{n \geq 1}$ is
a direct consequence of Lemma~\ref{lem:average} along 
with~\cite[Theorems 2.2. and 2.3.]{andrieu:2005}. 


\begin{lemma}
  \label{lem:average} 
  Under Assumptions {\bf \ref{hyp:model}a-b}), {\bf \ref{hyp:step}}, {\bf \ref{hyp:H}},
  {\bf\ref{hyp:vanish}} and Eq.~(\ref{eq:assump:stability}) it holds:
\[
\athn = \athnmu + \gamma_n h(\athnmu) + \gamma_n \zeta_n 
\]
with $\sup_n |\sum_{k=1}^n \gamma_k \zeta_k|< \infty$ with probability one. Then $\lim_n\sd (\athn, \mathcal{L}) =0$ with probability one. 
\end{lemma}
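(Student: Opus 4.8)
The plan is to recast the averaged recursion~(\ref{eq:SAmarkovnoise}) as a Robbins--Monro scheme driven by the mean field $h$, to control the weighted noise, and then to read off the conclusion from the general stochastic approximation results of~\cite[Theorems 2.2 and 2.3]{andrieu:2005}. Starting from~(\ref{eq:SAmarkovnoise}) I would \emph{define} $\zeta_n$ by $\gamma_n \zeta_n \eqdef \gamma_n \la (W_n\otimes I_d)(\bsY_n + \gamma_n^{-1}\othnmu)\ra - \gamma_n h(\athnmu)$, so that the claimed identity $\athn = \athnmu + \gamma_n h(\athnmu) + \gamma_n \zeta_n$ holds by construction; the whole content is then the bound on $\sum_k \gamma_k \zeta_k$. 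Using $\la (W_n\otimes I_d)\bs z\ra = \la \bs z\ra + N^{-1}\big((\un^T W_n - \un^T)\otimes I_d\big)\bs z$ and $\la \othnmu\ra = 0$ (since $\othnmu$ lies in the orthogonal subspace), I would split $\zeta_n = \zeta_n^{(1)} + \zeta_n^{(2)} + \zeta_n^{(3)} + \zeta_n^{(4)}$ with $\zeta_n^{(1)} \eqdef \la \bsY_n\ra - \bar h_n$, $\zeta_n^{(2)} \eqdef \bar h_n - h(\athnmu)$, $\zeta_n^{(3)} \eqdef N^{-1}\big((\un^T W_n - \un^T)\otimes I_d\big)\bsY_n$ and $\zeta_n^{(4)} \eqdef \gamma_n^{-1} N^{-1}\big((\un^T W_n - \un^T)\otimes I_d\big)\othnmu$, where $\bar h_n \eqdef \int \la \bs y\ra\,\mu_{\thnmu}(d\bs y) = \bE[\la \bsY_n\ra \mid \cF_{n-1}]$.

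Here $\zeta_n^{(1)}$, $\zeta_n^{(3)}$ and $\gamma_n \zeta_n^{(4)}$ are $\cF_n$-martingale increments, while $\zeta_n^{(2)}$ is a bias. For $\zeta_n^{(1)}$ this is the definition of $\bar h_n$; for $\zeta_n^{(3)}$ and $\gamma_n\zeta_n^{(4)}$ it follows from Assumption~{\bf \ref{hyp:model}\ref{hyp:model:c})} (conditional independence of $W_n$ and $\bsY_n$ and independence of $W_n$ from $\cF_{n-1}$), from $\un^T\bE[W_n] = \un^T$ in Assumption~{\bf \ref{hyp:model}\ref{hyp:model:a})}, and from the $\cF_{n-1}$-measurability of $\othnmu$. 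Since~(\ref{eq:assump:stability}) gives $\bP\{\bigcup_M \{\sup_n|\thn|\leq M\}\} = 1$, I would prove $\sup_n |\sum_{k\leq n}\gamma_k\zeta_k| < \infty$ almost surely on each event $\{\sup_n|\thn|\leq M\}$ and take the union over $M\in\bN$; on each such event I localise with $\tau_M \eqdef \inf\{n : |\thn| > M\}$ so that the indicator-weighted increments form genuine martingales.

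For the three martingale pieces I would apply the quadratic-variation criterion. As $W_n$ has entries in $[0,1]$ and, by Assumption~{\bf \ref{hyp:H}\ref{hyp:H:a})}, $\int|\bs y|^2\mu_{\thkmu}(d\bs y)$ is bounded on $\{|\thkmu|\leq M\}$, the conditional second moments of $\zeta_k^{(1)}$ and $\zeta_k^{(3)}$ are bounded there, so $\sum_k \gamma_k^2\,\bE[|\zeta_k^{(1)}|^2\mid\cF_{k-1}]$ and its analogue for $\zeta_k^{(3)}$ are finite because $\sum_k\gamma_k^2 < \infty$; both martingales converge a.s. For $\gamma_k\zeta_k^{(4)}$ the conditional second moment is $\lesssim |\othkmu|^2$, and Lemma~\ref{lem:cvg:othn} gives $\bE[|\othkmu|^2\,\un_{\sup_{j\leq k-1}|\thj|\leq M}] = \cO(k^{-2\alpha})$, summable since $\alpha>1/2$, so this martingale also converges a.s. The bias $\zeta_k^{(2)}$ is handled by absolute summability: Assumption~{\bf \ref{hyp:H}\ref{hyp:H:abis})} applied with $\un\otimes\la\thkmu\ra = \un\otimes\athkmu$ yields $|\zeta_k^{(2)}| \leq C_M|\othkmu|$ on $\{|\thkmu|\leq M\}$, so $\bE[\sum_k \gamma_k|\zeta_k^{(2)}|\,\un_{\sup_{j\leq k-1}|\thj|\leq M}] \leq C_M\sum_k \gamma_k\,\big(\bE[|\othkmu|^2\un_{\sup_{j\leq k-1}|\thj|\leq M}]\big)^{1/2}$; since the last expectation is $\cO(k^{-2\alpha})$ and the vanishing-rate condition $k^\alpha\gamma_k\to 0$ of Assumption~{\bf \ref{hyp:vanish}} makes $\gamma_k k^{-\alpha} = \cO(k^{-2\alpha})$, the series converges and $\sum_k\gamma_k|\zeta_k^{(2)}| < \infty$ a.s.

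Collecting the four contributions shows that $\sum_{k\leq n}\gamma_k\zeta_k$ converges a.s.\ on each $\{\sup_n|\thn|\leq M\}$, hence is bounded a.s.\ by~(\ref{eq:assump:stability}), which is the asserted bound. The recursion $\athn = \athnmu + \gamma_n h(\athnmu) + \gamma_n\zeta_n$ together with $\sum_n\gamma_n = \infty$, the boundedness of the weighted noise, and Assumptions~{\bf \ref{hyp:V}} and~{\bf \ref{hyp:H}} then places us in the framework of~\cite[Theorems 2.2 and 2.3]{andrieu:2005}, from which $\lim_n \sd(\athn,\cL) = 0$ a.s.\ follows. I expect the main obstacle to be the cross term $\zeta_n^{(4)}$: it carries the singular factor $\gamma_n^{-1}$ and becomes tractable only after multiplication by $\gamma_n$, and closing its summability---like that of the bias $\zeta_n^{(2)}$---relies on feeding the sharp $L^2$ rate $\|\othn\|_2 = \cO(n^{-\alpha})$ of Lemma~\ref{lem:cvg:othn} against the step-size conditions of Assumption~{\bf \ref{hyp:vanish}}, together with the localisation needed to turn these indicator-$L^2$ bounds into almost-sure statements.
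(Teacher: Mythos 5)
Your proposal is correct and follows essentially the same route as the paper: the paper also rewrites~(\ref{eq:SAmarkovnoise}) as a Robbins--Monro recursion, splits the noise into a conditionally centered (martingale-increment) part and a bias part $\xi_n = \int \la \bs y \ra \mu_{\thnmu}(d\bs y) - \int \la \bs y \ra \mu_{\un\otimes\athnmu}(d\bs y)$, controls both via Lemma~\ref{lem:cvg:othn}, $\sum_n\gamma_n^2<\infty$ and Assumption~{\bf \ref{hyp:H}}, localises on $\{\sup_n|\thn|\leq M\}$ using~(\ref{eq:assump:stability}), and concludes with \cite[Theorems 2.2 and 2.3]{andrieu:2005}. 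The only cosmetic differences are that you split the paper's single martingale term $e_n$ into three sub-pieces (the paper bounds it in one shot via the matrix $P_n = N^{-2}W_n^T\un\un^T W_n\otimes I_d$) and that you handle the bias by Cauchy--Schwarz with the rate $n^{\alpha}\gamma_n\to 0$, where the paper uses $2ab\leq a^2+b^2$; both yield the same conclusion.
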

\begin{proof} Eqs.~(\ref{eq:algo}) and~(\ref{eq:average}) along with assumption {\bf
    \ref{hyp:model}\ref{hyp:model:a})} yield:
 \begin{equation}
   \label{eq:rm}
   \athn =  \athnmu + \gamma_n \la \bs Z_n\ra\ ,  
 \end{equation}
where $\bs Z_n \eqdef (W_n\otimes I_d)(\bsY_n+\gamma_n^{-1}\othnmu)$,
 upon noting that under Assumption {\bf \ref{hyp:model}\ref{hyp:model:a})},
 $(W_n \otimes I_d) J=J$. We write $\la \bs Z_n\ra = h(\athnmu)+e_{n}+\xi_{n}$
 where
\begin{eqnarray*}
  e_{n}&\eqdef& \la (W_n\otimes I_d)(\bsY_n+\gamma_n^{-1}\othnmu)\ra - \int \la \bs y \ra \mu_{\thnmu}(d \bs y)  \\
  \xi_{n} &\eqdef& \int \la \bs y \ra  \mu_{\thnmu}(d \bs y) - \int \la \bs y \ra \mu_{\un \otimes \la \thnmu \ra}(d \bs y) \ .
\end{eqnarray*}
By Assumption~{\bf \ref{hyp:H}\ref{hyp:H:abis})} and the inequality $2 a b \leq
a^2 + b^2$, for any $M>0$ there exists a constant $C$ such that
\begin{equation}
\bE\left|\un_{\sup_n |\thn| \leq M} \sum_{n\geq 1} \gamma_n \xi_n\right| 
\leq  C \  \left(\sum_{n\geq 1}\gamma_n^2 +\sum_{n\geq 1}\bE \left(\left|\othnmu\right|^2 \un_{\sup_n |\thn| \leq M} \right)\right)\ . \label{eq:xideuxsomme}
\end{equation}
Therefore, the RHS in (\ref{eq:xideuxsomme}) is finite under the condition~{\bf
  \ref{hyp:step}} and Lemma~\ref{lem:cvg:othn}, thus implying that $
\sum_{n\geq 1}\gamma_n \xi_n$ converges w.p.1. on the set $\{\sup_n |\thn| \leq
M \}$ for any $M>0$ and therefore w.p.1.  since $\PP{\sup_n |\thn| < \infty} =1$.

Since $\bE\left [e_n\,|\cF_{n-1}\right]=0$, the sequence $\left(S_n \eqdef
  \sum_{k=1}^n \gamma_k e_k \un_{\sup_{\ell \leq k-1} |\bs \theta_\ell| \leq M} \right)_{n\geq 1}$ is a martingale.  We prove that
it converges almost surely by estimating its second order moment. For any $k\geq 1$,
\begin{eqnarray*}
  \bE\left[ |S_k|^2\right] &\leq&  \sum_{n\geq 1}\gamma_n^2\, \bE\left[ \left|e_n\right|^2\un_{\sup_{\ell \leq n-1} |\bs \theta_\ell| \leq M} \right] \\
&\leq& \sum_{n\geq 1}\gamma_n^2\, \bE\left[ (\bY_n + \gamma_n^{-1}\othnmu)^TP_n (\bY_n + \gamma_n^{-1}\othnmu)\un_{\sup_{\ell \leq n-1} |\bs \theta_\ell| \leq M} \right]
\end{eqnarray*}
where we set $P_n:=N^{-2}W_n^T\un\un^TW_n\otimes I_d$. Note that $P_n$ is independent of $Y_n$ conditionally to $\cF_{n-1}$.
Since $W_n$ is a stochastic matrix, its spectral norm is bounded uniformly in~$n$. 
Therefore, there exists a constant $C>0$ such that:
\begin{align*}
  \bE\left[ |S_n|^2\right]& \leq C \sum_{n\geq 1}\gamma_n^2\, \bE\left[
    \left|\bY_n + \gamma_n^{-1}\othnmu\right|^2\un_{\sup_{\ell \leq n-1} |\bs \theta_\ell| \leq M} \right]  \\
  & \leq 2C \sum_{n\geq 1}\gamma_n^2\, \bE\left[ |\bY_n|^2\un_{\sup_{\ell \leq
        n-1} |\bs \theta_\ell| \leq M} \right] + 2C \sum_{n\geq 1}\bE\left[
    |\othnmu|^2\un_{\sup_{\ell \leq n-1} |\bs \theta_\ell| \leq M} \right] \ .
\end{align*}
By Assumption~{\bf \ref{hyp:H}\ref{hyp:H:a})},
\[
\sup_n \bE\left[ |\bY_n|^2\un_{\sup_{\ell \leq n-1} |\bs \theta_\ell| \leq M}
\right] < \infty \ .
\]
By Lemma~\ref{lem:cvg:othn}  and Assumption~{\bf
  \ref{hyp:step}} it follows that $\textsc{}\sup_n \bE\left[ |S_n|^2\right]$ is
finite thus implying that the martingale $(S_n)_{n \geq 1}$ converges almost
surely to a r.v. which is finite w.p.1. (see e.g.  \cite[Corollary
2.2.]{hall:heyde:1980}).  

We now consider the last term $\sum_k \gamma_k e_k \left(1 - \un_{\sup_{ \ell
      \leq k-1} | \bs \theta_\ell | \leq M} \right)$. On the set $\{\sup_n
|\thn| \leq M \}$, this sum is null.  This concludes the proof since
$\PP{\sup_n |\thn| < \infty} =1$.

\end{proof}

\subsection{Proof of Theorem~\ref{the:stab}}
\label{sec:proof:the:stab}
Our stability result relies on preliminary technical lemmas,
Lemmas~\ref{lem:suites} and \ref{lem:growth-nablaV}.  Theorem~\ref{the:stab} is
a consequence of Lemma~\ref{lem:agreement}: it is established that $\lim_n
\othn =0$ with probability one, which implies that $\PP{\limsup_n |\othn| <
  \infty}=1$. It is also established that $\PP{\limsup_n |\athn| <
  \infty}=1$. 
\begin{lemma}
\label{lem:suites}
Let $(\gamma_n)_{n\geq 0}$, $(\rho_n)_{n\geq 0}$ be respectively a positive    
and a $[0,1]$-valued sequence such that $\sum_n \gamma_n^2 < \infty$; and      
$u_n$, $v_n$ be two real sequences such that for $n \geq n_0$, 
\begin{eqnarray} 
u_n &\leq& \rho_n u_{n-1} + 
\gamma_n M\sqrt{u_{n-1}}(1 + u_{n-1}+v_{n-1})^{1/2}+
\gamma_n^2M\left(1 + u_{n-1}+v_{n-1} \right) \ , 
\label{eq:majun} \\ 
v_n &\leq& v_{n-1} + M u_{n-1} +
\gamma_n M\sqrt{u_{n-1}}\left(1+u_{n-1}+v_{n-1}\right)^{1/2}+
\gamma_n^2 M(1+u_{n-1}+v_{n-1})\ . 
\label{eq:majvn} 
\end{eqnarray}                        
Then: {\sl i)} $\sup_n v_n<\infty$, {\sl ii)} $ \limsup_n \phi_n u_n <
\infty$ for any positive sequence $(\phi_n)_{n \geq 0}$ such that
\begin{eqnarray}
  \label{eq:condition:phi:set1}
&& \limsup_n \left( \gamma_n \sqrt{\phi_n} + \frac{
    \phi_{n-1}}{\phi_n}\right) < \infty \ , \quad \liminf_n (\gamma_n \sqrt{\phi_n})^{-1}
\left( \frac{\phi_{n-1}}{\phi_n} -\rho_n\right) > 0 \ ,  \\
  \label{eq:condition:phi:set2}
&& \sum_n \phi_n^{-1} < \infty \ . 
\end{eqnarray}
\end{lemma}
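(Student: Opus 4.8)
The plan is to treat this as a purely deterministic stability result for two coupled scalar recursions, in which $u_n$ plays the role of a contracting ``disagreement'' and $v_n$ that of a non-contracting ``Lyapunov'' term. Throughout I would write $w_n \eqdef 1 + u_n + v_n$, fix a weight sequence $(\phi_n)$ satisfying (\ref{eq:condition:phi:set1})--(\ref{eq:condition:phi:set2}), and set $a_n \eqdef \phi_n u_n$ and $\epsilon_n \eqdef \gamma_n \sqrt{\phi_n}$. The first condition in (\ref{eq:condition:phi:set1}) makes $\epsilon_n$ and $\phi_{n-1}/\phi_n$ bounded, while the second yields $\rho_n \, \phi_n/\phi_{n-1} \leq 1 - \kappa \epsilon_n$ for some $\kappa>0$ and all large $n$. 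This per-step contraction for $a_n$ is the crucial gain: in the vanishing-rate regime $\rho_n \to 1$, so no \emph{fixed} linear combination $v_n + \beta u_n$ can be made contracting, and the weights $\phi_n$ are exactly what restores a contraction.

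First I would record two one-step inequalities. Multiplying (\ref{eq:majun}) by $\phi_n$, using $u_{n-1} = a_{n-1}/\phi_{n-1}$ and $\sqrt{u_{n-1}} \leq \sqrt{w_{n-1}}$, and applying Young's inequality $2xy \leq t x^2 + t^{-1}y^2$ to split the cross term, gives a recursion of the form $a_n \leq (1 - \tfrac{\kappa}{2}\epsilon_n)\, a_{n-1} + C\, \epsilon_n \, w_{n-1}$, valid once $n$ is large. For the slow variable the key trick in (\ref{eq:majvn}) is to pair the factor $\sqrt{v_{n-1}}$ (appearing through $\sqrt{w_{n-1}}$) with $\gamma_n$ rather than with a constant: writing $\sqrt{w_{n-1}} \leq 1 + \sqrt{u_{n-1}} + \sqrt{v_{n-1}}$ and bounding $\gamma_n \sqrt{u_{n-1}}\,\sqrt{v_{n-1}} \leq \tfrac12 u_{n-1} + \tfrac12 \gamma_n^2 v_{n-1}$ produces $v_n \leq (1 + C\gamma_n^2)\, v_{n-1} + C\big( u_{n-1} + \gamma_n \sqrt{u_{n-1}} + \gamma_n^2\big)$. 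The multiplicative factor here is harmless because $\sum_n \gamma_n^2 < \infty$, so $\prod_n (1+C\gamma_n^2) < \infty$; pairing $\sqrt{v_{n-1}}$ with a constant instead would produce the fatal factor $\prod_n(1+C\gamma_n)=\infty$.

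With these two estimates the argument closes by a simultaneous (bootstrap) induction. Fix a large threshold $n_0$ and constants $A,K$ and prove by induction on $n\geq n_0$ that $v_n \leq K$ and $a_n \leq A$ hold together. The inductive hypothesis gives $w_{n-1} \leq 1 + A/\phi_{n-1} + K$, so the $a$-recursion is of the elementary scalar type $a_n \leq (1-g_n)a_{n-1} + \beta g_n$ with $g_n = \tfrac{\kappa}{2}\epsilon_n \in [0,1]$ and $\beta \leq C'(1+K)$ for $n_0$ large (since $A/\phi_{n-1}$ is then small); such a recursion keeps $a_n \leq \max(a_{n_0},\beta)$, i.e. $a_n \leq A$ provided $A \asymp C'(1+K)$. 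For the $v$-recursion, summing the benign inequality gives $v_n \leq P\big(v_{n_0} + C\sum_{k>n_0}(u_{k-1} + \gamma_k\sqrt{u_{k-1}} + \gamma_k^2)\big)$ with $P \leq \exp(C\sum_k \gamma_k^2)$; bounding $u_{k-1} \leq A/\phi_{k-1}$ and using $\sum_k \phi_k^{-1} < \infty$ together with Cauchy--Schwarz, $\sum_k \gamma_k \phi_{k-1}^{-1/2} \leq (\sum_k\gamma_k^2)^{1/2}(\sum_k\phi_{k-1}^{-1})^{1/2}$, bounds the right-hand side by $v_{n_0}$ plus a small multiple of $A$ (hence of $K$) once $n_0$ is large.

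The main obstacle is precisely this circular coupling: bounding $v$ requires the summability $\sum_n u_n < \infty$, which is (ii), whereas bounding $a_n = \phi_n u_n$ requires $w_{n-1}$ — hence $v$ — to be bounded, which is (i). The resolution is to make the two linear feedbacks mutually sub-unit, by choosing $n_0$ large enough that the tails of $\sum_n\gamma_n^2$ and $\sum_n\phi_n^{-1}$ render the coefficient of $K$ in the $v$-bound strictly less than one; the constants $A \asymp C'(1+K)$ and $K$ can then be fixed consistently, closing the induction and giving simultaneously (i) $\sup_n v_n < \infty$ and $\sup_n \phi_n u_n < \infty$. Finally, once $\sup_n w_n < \infty$ is established, the $a$-recursion alone yields $\limsup_n \phi_n' u_n < \infty$ for \emph{every} admissible weight $\phi'$, which is the full content of (ii).
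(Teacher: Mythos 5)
Your proof is correct, and it closes the circular $u$--$v$ dependence by a genuinely different mechanism from the paper's. The paper first majorizes $(u_n,v_n)$ by sequences $(a_n,b_n)$ defined by the corresponding \emph{equalities}, and then exploits the telescoping identity $b_n=a_n+\sum_{k=n_0}^{n-1}(M+1-\rho_{k+1})a_k$ to dominate both sequences by the partial sums $A_n\eqdef(M+1)\sum_{k=n_0}^na_k$; this eliminates $v$ from the problem entirely. Dividing the $a$-recursion by the increasing sequence $A_n$ yields a self-contained recursion for $c_n\eqdef\phi_na_n/A_n$, which is bounded by a one-variable induction; then $A_n$ itself is shown bounded via $A_n\leq\exp\left(C\{\phi_{n-1}^{-1}+\tilde\gamma_n^2\}\right)A_{n-1}$ together with $\sum_n\phi_n^{-1}<\infty$ and $\sum_n\gamma_n^2<\infty$, after which $v_n\leq A_n$ and $\phi_nu_n\leq c_nA_n$ give (i) and (ii) at once. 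You instead keep the pair coupled and run a simultaneous bootstrap induction on $a_n=\phi_nu_n\leq A$ and $v_n\leq K$, enabled by the two inequalities you isolate: the weighted contraction $a_n\leq(1-\kappa\epsilon_n/2)a_{n-1}+C\epsilon_nw_{n-1}$, and the bound $v_n\leq(1+C\gamma_n^2)v_{n-1}+C(u_{n-1}+\gamma_n\sqrt{u_{n-1}}+\gamma_n^2)$ whose multiplicative factor is harmless precisely because you pair $\sqrt{v_{n-1}}$ with $\gamma_n$. Both key inequalities do hold, but two points need explicit care in a full write-up: (a) the hypotheses bound $\phi_{n-1}/\phi_n$ above but \emph{not} $\phi_n/\phi_{n-1}$, so in the $a$-step the Young parameter must be chosen proportional to $\epsilon_n\phi_n/\phi_{n-1}$ (pairing $\sqrt{a_{n-1}}$ against the whole remaining factor); the unbounded ratio then cancels and the noise term is indeed of order $\epsilon_nw_{n-1}$ with an absolute constant, as you claim; (b) the constants must be fixed in the order $n_0$, then $K$, then $A$, which is legitimate because the tail-smallness requirements (bounded Gronwall factor, coefficient of $K$ below one, $\phi_{n-1}$ above a threshold depending only on $C$ and $\kappa$) involve neither $A$ nor $K$. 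As to what each route buys: the paper's normalization by $A_n$ dispenses with all tail bookkeeping and ordering-of-constants issues, at the price of the less transparent majorant/partial-sum device; your bootstrap is longer to make rigorous but more standard, makes the structural reading explicit (contracting disagreement versus slowly drifting Lyapunov term), and cleanly yields, as you note, conclusion (ii) for \emph{every} admissible weight once $\sup_nw_n<\infty$ is known. One caveat you share with the paper: in both arguments, conclusion (i) is proved by invoking some admissible $\phi$, so it implicitly presupposes that at least one sequence satisfying (\ref{eq:condition:phi:set1})--(\ref{eq:condition:phi:set2}) exists, as is the case in the paper's application with $\phi_n=n^{2\alpha}$.
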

\begin{proof}
  $\bullet$ Set $\tilde \gamma_n=(1+M)\gamma_n$. Define two sequences $(a_n,b_n)_{n\geq n_0}$ such that
  $a_{n_0}=b_{n_0}=\max(u_{n_0},v_{n_0})$ and for each $n\geq n_0+1$:
\begin{eqnarray}
a_n&=& \rho_n a_{n-1} + \tilde \gamma_n\sqrt{a_{n-1}} \, (1+ a_{n-1}+b_{n-1})^{1/2}+ \tilde \gamma_n^2(1+ a_{n-1}+b_{n-1})  \label{eq:a}\\
b_n&=& b_{n-1} +M a_{n-1} +\tilde \gamma_n \sqrt{a_{n-1}} (1+a_{n-1}+b_{n-1})^{1/2} +\tilde \gamma_n^2 (1+ a_{n-1} + b_{n-1})\ .
  \label{eq:vp}
\end{eqnarray}
It is straightforward to show by induction that $u_n\leq a_n$ and $v_n\leq b_n$
for any $n \geq n_0$.  In addition, $b_n = b_{n-1}+ a_n+(M-\rho_n)a_{n-1}$. Thus
for $n \geq n_0+1$,
$$
b_n = a_n + \sum_{k=n_0}^{n-1}(M+1-\rho_{k+1}) a_k\ .
$$
Define $A_n \eqdef (M+1)\sum_{k=n_0}^n a_k$, $n \geq n_0$.  The above equality
implies that $a_n\leq b_n\leq A_n$. As a consequence, Eq.~(\ref{eq:a}) implies:
\begin{eqnarray}
a_n&\leq& \rho_n a_{n-1} + \tilde \gamma_n\sqrt{a_{n-1}} \, (1+ 2\,A_{n-1})^{1/2}+ \tilde \gamma_n^2(1+ 2A_{n-1})\ .
\label{eq:maja}
\end{eqnarray}
As $(A_n)_{n \geq n_0}$ is a positive increasing sequence, for any $n \geq
n_0+1$,
\begin{equation}
\label{eq:majb}
  \frac{a_n}{A_n}\leq \rho_n \frac{a_{n-1}}{A_{n-1}} + \tilde \gamma_n\sqrt{\frac{a_{n-1}}{A_{n-1}}} \, \left(\frac{1}{A_{n_0}}+ 2\right)^{1/2}
+ \tilde \gamma_n^2\left(\frac{1}{A_{n_0}}+ 2\right)\ .
\end{equation}
 $\bullet$ Define $L^2 \eqdef 1/A_{n_0}+2$, 
 and $c_n \eqdef \phi_n {a_n}/{ A_n}$.  By (\ref{eq:majb}), for any $n \geq
 n_0+1$,
 \begin{equation}
\label{eq:cn}
 c_n\leq \rho_n  \frac{\phi_n}{\phi_{n-1}}  c_{n-1} +  L \tilde \gamma_n \sqrt{c_{n-1} \phi_n}  \sqrt{ \frac{\phi_n}{\phi_{n-1}} }  +  L^2\ \tilde \gamma_n^2 \phi_n,
\end{equation}
and under the assumption (\ref{eq:condition:phi:set1}), there exist $n_1 \geq
n_0$ and a constant $\xi >0$ such that for any $n \geq n_1$,
\begin{equation}
\sqrt{\frac{\phi_{n-1}}{\phi_n}}  L \xi \left\{1 + \xi L \tilde \gamma_n \sqrt{\phi_{n-1}} \right\}  \leq  \left( \frac{\phi_{n-1}}{\phi_n} - \rho_n \right) \left( \tilde \gamma_n \sqrt{\phi_n} \right)^{-1}\ .
\label{eq:lessthanone}
\end{equation}
Define 
\begin{equation}
  \label{eq:definition:A}
  A\eqdef\max\left(\frac{1}{\xi},\frac{1}{\xi^2},c_{n_1}\right) \ .
\end{equation}
We prove by induction on $n$ that $c_n \leq A$ for any $n \geq n_1$.  The claim
holds true for $n=n_1$ by definition of $A$.  Assume that $c_{n-1}\leq A$ for
some $n-1\geq n_1$. Using~(\ref{eq:cn}) and (\ref{eq:definition:A}), for $n
\geq n_1+1$,
$$
\frac{c_n}{A}\leq \rho_n \frac{\phi_n}{\phi_{n-1}} + \frac{L}{\sqrt{A}} \tilde \gamma_n
\sqrt{ \phi_n} \sqrt{ \frac{\phi_n}{\phi_{n-1}} } + \frac{L^2}{A} \ \tilde \gamma_n^2
\phi_n,
$$
By (\ref{eq:lessthanone}), the RHS is less than one so that $c_n \leq A$.
This proves
that $(c_n)_{n \geq n_0}$ is a bounded sequence.  

$\bullet$ We prove that $(A_n)_{n \geq n_0}$ is a bounded sequence.  Using the
fact that $\sup_{n \geq n_1} \rho_n\leq 1$, $(A_n)_{n \geq n_0}$ is increasing
and Eq.~(\ref{eq:maja}), it holds for $n \geq n_1+1$ 
\begin{eqnarray*}
A_n = A_{n-1} + a_n &\leq& A_{n-1} + a_{n-1} + \tilde \gamma_n\sqrt{a_{n-1}} \, \sqrt{A_{n-1}}L^{1/2}+ \tilde \gamma_n^2L^2 A_{n-1} \\
&\leq& \left(1 + c_{n-1} \phi_{n-1}^{-1} + L^{1/2} \tilde \gamma_n \phi_{n-1}^{-1/2} \sqrt{c_{n-1}} + \tilde \gamma_n^2L^2\right)  A_{n-1} .
\end{eqnarray*}
Finally, since $\sup_{n \geq n_1} c_{n}\leq A$ and $(1+t^2) \leq \exp(t^2)$,
there exists $C>0$ s.t.  for any $n\geq n_1+1$, $A_n \leq \exp\left( C
  \{\phi_{n-1}^{-1} + \tilde \gamma_n^2 \} \right) A_{n-1}$ (note that under
(\ref{eq:condition:phi:set1}), $\limsup_n \{\tilde \gamma_n / \sqrt{\phi_n} \} \phi_n
< \infty$). By assumptions, $\sum_n \{\phi_{n-1}^{-1} + \tilde \gamma_n^2 \}< \infty$,
$(A_n)_{n \geq n_0}$ is therefore bounded.

$\bullet$ The proof of the lemma is concluded upon noting that $v_n\leq b_n\leq
A_n$ and $ u_n\leq  a_n\leq \tilde \gamma_n^{2} c_n A_n$.

\end{proof}
\begin{remark}
  If the sequences $(\gamma_n, \rho_n)_{n\geq 0}$ are such that
\begin{eqnarray}
\label{eq:condition:phi:set1:rmk}
&& \limsup_n \left( \frac{
    \gamma_{n}}{\gamma_{n-1}} + \frac{
    1- \rho_{n-1}}{1-\rho_n}\right) < \infty \ , \ \  \liminf_n \frac{1}{1 - \rho_n}
\left( \frac{(1-\rho_{n-1})^2}{(1-\rho_n)^2}\frac{\gamma_n^2}{\gamma_{n-1}^2} -\rho_n\right) > 0 \ ,  \\
\label{eq:condition:phi:set2:rmk}
&& \sum_n \gamma_n^2 (1-\rho_n)^{-2} < \infty \ , 
\end{eqnarray}
then the conditions (\ref{eq:condition:phi:set1}) and
(\ref{eq:condition:phi:set2}) are satisfied with $\phi_n \eqdef (1-\rho_n)^2 /
\gamma_n^2$. Examples of sequences satisfying these conditions are $\rho_n =
1-a/n^\eta$, $\gamma_n = \gamma_0 / n^\xi$ with $0 \leq \eta < 1 \land
(\xi-1/2)$.
\end{remark}

\begin{lemma}\label{lem:growth-nablaV}  
  Let $V: \bR^d \to \bR^+$ be a differentiable function such that $\nabla V$ is
  Lipschitz on $\bR^d$. There exist  constants $C,C'$ such that for any $\theta \in \bR^d$,
  $|\nabla V(\theta)|^2 \leq C V(\theta) $, and for any $\theta,\theta'\in \bR^d$,
\begin{equation} 
V(\theta') \leq V(\theta) + \nabla V(\theta)^T(\theta'-\theta) 
+ C' |\theta'-\theta|^2 
\label{eq:V-taylor} 
\end{equation} 
\end{lemma}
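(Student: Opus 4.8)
The plan is to establish the quadratic upper bound (\ref{eq:V-taylor}) first, since it is the direct consequence of the Lipschitz assumption, and then to deduce the gradient-domination inequality $|\nabla V(\theta)|^2 \leq C V(\theta)$ from it by exploiting the fact that $V$ takes values in $\bR^+$.

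First I would prove (\ref{eq:V-taylor}). Writing $L$ for the Lipschitz constant of $\nabla V$, the fundamental theorem of calculus applied to $t \mapsto V(\theta + t(\theta'-\theta))$ gives
$$V(\theta') - V(\theta) - \nabla V(\theta)^T(\theta'-\theta) = \int_0^1 \left( \nabla V(\theta + t(\theta'-\theta)) - \nabla V(\theta) \right)^T (\theta'-\theta)\, \ud t \ .$$
Bounding the integrand by Cauchy--Schwarz together with the Lipschitz inequality $|\nabla V(\theta + t(\theta'-\theta)) - \nabla V(\theta)| \leq L\, t\, |\theta'-\theta|$, the right-hand side is at most $\int_0^1 L\, t\, \ud t \cdot |\theta'-\theta|^2 = (L/2) |\theta'-\theta|^2$, so (\ref{eq:V-taylor}) holds with $C' = L/2$.

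Next I would derive the first inequality. The key idea, and the only place where the hypothesis $V \geq 0$ enters, is to substitute a well-chosen point into (\ref{eq:V-taylor}): taking $\theta' = \theta - s\, \nabla V(\theta)$ for a scalar $s > 0$, so that $|\theta'-\theta|^2 = s^2 |\nabla V(\theta)|^2$, and using $V(\theta') \geq 0$, yields
$$0 \leq V(\theta) - \left( s - C' s^2 \right) |\nabla V(\theta)|^2 \ .$$
Choosing $s = 1/(2C')$ maximizes $s - C' s^2$, with maximal value $1/(4C')$, whence $|\nabla V(\theta)|^2 \leq 4 C'\, V(\theta)$; this is the first claim with $C = 4C' = 2L$.

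I do not expect a genuine obstacle, as the argument is elementary and self-contained. The only subtlety worth flagging is that the non-negativity of $V$ is indispensable for the gradient-domination bound: without it (for instance, for an affine $V$) that inequality would fail, whereas the quadratic upper bound (\ref{eq:V-taylor}) holds for any continuously differentiable function with Lipschitz gradient irrespective of sign.
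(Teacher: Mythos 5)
Your proof is correct and follows essentially the same route as the paper: establish \eqref{eq:V-taylor} from the integral form of the Taylor remainder plus the Lipschitz bound, then substitute $\theta' = \theta - s\,\nabla V(\theta)$ and use $V \geq 0$ to obtain the gradient bound. The only difference is cosmetic: you optimize the step $s = 1/(2C')$ to get the explicit constant $C = 2L$, whereas the paper merely takes $\mu$ small enough.
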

\begin{proof}
Given any $\theta, \theta' \in
\bR^d$, we have
\[ 
V(\theta') = V(\theta) + 
\nabla V(\theta)^T(\theta'-\theta) + 
\int_0^1 \left( \nabla V(\theta + t(\theta' - \theta))
- \nabla V(\theta) \right)^T  (\theta' - \theta) \, dt . 
\]
This implies (\ref{eq:V-taylor}) since $\nabla V$ is Lipschitz. Then, Applying
(\ref{eq:V-taylor}) with $\theta' = \theta - \mu \nabla V(\theta)$ where $\mu >
0$ and recalling that $V$ is nonnegative, we also have $0 \leq V(\theta) - \mu
(1 - \mu C' ) |\nabla V(\theta) |^2$.  Choosing $\mu$ small enough, we
thus get the result.
\end{proof}

\begin{lemma}[Agreement and Stability]
\label{lem:agreement}
Suppose Assumptions {\bf \ref{hyp:model}a-b)}, {\bf \ref{hyp:step}}, {\bf
  \ref{hyp:V}\ref{hyp:V:a}-\ref{hyp:V:b}}) and {\bf\ref{hyp:vanish}}. Assume in
addition ST~{\bf \ref{ST:1}-\ref{ST:2})}. Then,
  \begin{enumerate}[a)]
  \item $\sum_{n\geq 1}\bE\left|\othn\right|^2<\infty$ and $(\othn)_{n \geq 1}$
    converges to zero w.p.1.
  \item $\sup_{n\geq 1} \bE V(\athn)<\infty$ and $\sup_{n} \bE\left[|
    \bsYn |^2 \right] < \infty$,
  \end{enumerate}
  where $\la \bs x \ra$ and $\bs x_\bot$ are given by (\ref{eq:average}) and
  (\ref{eq:orthogonal}).
\end{lemma}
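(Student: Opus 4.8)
The plan is to reduce both claims to the abstract coupled recursion of Lemma~\ref{lem:suites}, applied to the deterministic sequences
\[
u_n \eqdef \bE\left|\othn\right|^2, \qquad v_n \eqdef \bE V(\athn).
\]
I would establish that $(u_n,v_n)$ obeys inequalities of the form \eqref{eq:majun}--\eqref{eq:majvn}, and then read off a) and b) from the conclusions of Lemma~\ref{lem:suites} with the weight $\phi_n \eqdef (1-\rho_n)^2/\gamma_n^2$. Throughout I set $q_{n-1} \eqdef \int |\bs y|^2\,\mu_{\thnmu}(d\bs y)$, so that by Assumption~{\bf\ref{hyp:model}b)} one has $\bE[|\bsYn|^2\,|\,\cF_{n-1}] = q_{n-1}$, while ST\ref{ST:2} gives $q_{n-1} \leq C(1 + V(\athnmu) + |\othnmu|^2)$.

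First I would derive the recursion for $u_n$. Starting from $\othn = \Jo(W_n\otimes I_d)(\othnmu + \gamma_n\bsYn)$ and repeating the conditional computation in the proof of Lemma~\ref{lem:cvg:othn}, the contraction property of $\bE(W_n^T(I_N-\un\un^T/N)W_n)$ yields
\[
\bE\left[|\othn|^2\,\vert\,\cF_{n-1}\right] \leq \rho_n\left(|\othnmu|^2 + \gamma_n^2 q_{n-1} + 2\gamma_n|\othnmu|\,q_{n-1}^{1/2}\right).
\]
Substituting the growth bound on $q_{n-1}$, taking expectations, and controlling the cross term by Cauchy--Schwarz, $\bE[|\othnmu|\,q_{n-1}^{1/2}] \leq \sqrt{u_{n-1}}\,(\bE q_{n-1})^{1/2}$, produces exactly \eqref{eq:majun} (with $\rho_n\leq 1$ absorbed in the lower-order terms).

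Next, for $v_n$, I would apply the Taylor-type inequality \eqref{eq:V-taylor} of Lemma~\ref{lem:growth-nablaV} along the decomposition $\athn = \athnmu + \gamma_n(h(\athnmu) + e_n + \xi_n)$ supplied by Lemma~\ref{lem:average}. Taking $\bE[\,\cdot\,\vert\,\cF_{n-1}]$, the martingale increment $e_n$ vanishes, the drift $\nabla V(\athnmu)^T h(\athnmu)$ is nonpositive by Assumption~{\bf\ref{hyp:V}\ref{hyp:V:b})}, and the bias obeys $|\xi_n|\leq C|\othnmu|$ by ST\ref{ST:2}. Combining $|\nabla V|\leq (CV)^{1/2}$ (Lemma~\ref{lem:growth-nablaV}) with Cauchy--Schwarz bounds the first-order contribution by $\gamma_n M\sqrt{u_{n-1}}\,(1 + u_{n-1} + v_{n-1})^{1/2}$; meanwhile $|\athn-\athnmu|^2 \leq C''(\gamma_n^2|\bsYn|^2 + |\othnmu|^2)$ — using the uniform spectral-norm bound on the stochastic matrices $W_n$ — contributes, after taking expectations and inserting the growth bound on $q_{n-1}$, the term $M u_{n-1}$ together with a term $\gamma_n^2 M(1 + u_{n-1} + v_{n-1})$. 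This is precisely \eqref{eq:majvn}.

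Finally, conditions \eqref{eq:condition:phi:set1}--\eqref{eq:condition:phi:set2} for $\phi_n = (1-\rho_n)^2/\gamma_n^2$ follow from Assumption~{\bf\ref{hyp:vanish}} as in the remark following Lemma~\ref{lem:suites}, so Lemma~\ref{lem:suites} gives $\sup_n v_n < \infty$ and $\limsup_n \phi_n u_n < \infty$. The first is the assertion $\sup_n \bE V(\athn) < \infty$ in b); the second yields $u_n = \cO(\gamma_n^2/(1-\rho_n)^2)$, whence $\sum_n \bE|\othn|^2 = \sum_n u_n < \infty$ by \eqref{eq:condition:phi:set2}, and Tonelli's theorem forces $\othn \to 0$ w.p.1, proving a). The remaining bound $\sup_n\bE|\bsYn|^2 <\infty$ follows since $\bE|\bsYn|^2 = \bE q_{n-1} \leq C(1 + v_{n-1} + u_{n-1})$ is bounded. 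The main obstacle is the coupling: neither sequence can be controlled alone, since the disagreement enters the Lyapunov recursion through $\xi_n$ and the quadratic remainder while the Lyapunov value enters the disagreement recursion through $q_{n-1}$; the real work lies in casting both inequalities into the single joint template \eqref{eq:majun}--\eqref{eq:majvn}, which is exactly what makes the simultaneous induction in Lemma~\ref{lem:suites} applicable.
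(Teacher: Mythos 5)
Your strategy coincides with the paper's own proof: set $u_n=\bE|\othn|^2$, $v_n=\bE V(\athn)$, derive the coupled inequalities \eqref{eq:majun}--\eqref{eq:majvn} (conditional contraction through $\rho_n$ plus ST{\bf\ref{ST:2}}) for $u_n$; the Taylor bound of Lemma~\ref{lem:growth-nablaV}, the sign condition {\bf\ref{hyp:V}\ref{hyp:V:b})}, the bias bound $|\xi_n|\leq C|\othnmu|$ and the quadratic remainder for $v_n$), then conclude with Lemma~\ref{lem:suites}; both derivations are correct as you sketch them. One caveat: do not cite Lemma~\ref{lem:average} for the decomposition of $\athn$, since that lemma assumes the stability condition \eqref{eq:assump:stability}, which is precisely what the present lemma is meant to establish; what you actually use is only the algebraic identity \eqref{eq:rm}, which holds under Assumption {\bf\ref{hyp:model}\ref{hyp:model:a})} alone, so quote that identity instead to avoid circularity.

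The genuine gap is the final step, the choice $\phi_n=(1-\rho_n)^2/\gamma_n^2$. The conditions \eqref{eq:condition:phi:set1}--\eqref{eq:condition:phi:set2} for this weight do \emph{not} follow from Assumption~{\bf\ref{hyp:vanish}}: the Remark after Lemma~\ref{lem:suites} derives them from the \emph{additional} hypotheses \eqref{eq:condition:phi:set1:rmk}--\eqref{eq:condition:phi:set2:rmk}, which require two-sided ratio regularity such as $\limsup_n(1-\rho_{n-1})/(1-\rho_n)<\infty$, whereas Assumption~{\bf\ref{hyp:vanish}} gives only the one-sided bound \eqref{eq:condrho} and no upper control on $1-\rho_n$ or its ratios. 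Concretely, take $\gamma_n=n^{-4/5}$ and $\alpha=7/10$, and let $1-\rho_n=n^{-1/10}$ for $n$ even, $1-\rho_n=1$ for $n$ odd: Assumption~{\bf\ref{hyp:vanish}} holds, yet $\phi_{n-1}/\phi_n\sim n^{1/5}\to\infty$ along even $n$, so \eqref{eq:condition:phi:set1} fails and Lemma~\ref{lem:suites} cannot be invoked with your $\phi_n$. The repair is the paper's choice $\phi_n=n^{2\alpha}$: then $\gamma_n\sqrt{\phi_n}=n^\alpha\gamma_n\to 0$ and $\phi_{n-1}/\phi_n\to 1$ by \eqref{eq:condgamma}; writing $\phi_{n-1}/\phi_n-\rho_n=(1-\rho_n)-\left(1-(1-1/n)^{2\alpha}\right)$ and noting $1-(1-1/n)^{2\alpha}=\cO(1/n)=o(n^\alpha\gamma_n)$ by the second part of \eqref{eq:condgamma}, the bound \eqref{eq:condrho} yields the required positive liminf; finally $\sum_n\phi_n^{-1}=\sum_n n^{-2\alpha}<\infty$ since $\alpha>1/2$. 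With this substitution, $\limsup_n n^{2\alpha}u_n<\infty$ gives $\sum_n u_n<\infty$, and the rest of your argument (monotone convergence for part a), boundedness of $v_n$ and of $\bE|\bsYn|^2$ for part b)) goes through unchanged.
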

\begin{proof}
Define $ u_n \eqdef \bE\left[|\othn|^2\right]$ and $ v_n
\eqdef\bE\left[V(\athn)\right]\ .$ We prove that there exists a constant $M>0$
and an integer $n_0$ such that for any $n\geq n_0$, 
inequalities~\eqref{eq:majun} and \eqref{eq:majvn} are satisfied. 
The proof is then concluded by application of Lemma~\ref{lem:suites} upon
noting that under assumption~{\bf \ref{hyp:step}}, the rate $\phi_n = n^{2
  \alpha}$ satisfies the conditions (\ref{eq:condition:phi:set1}) and
(\ref{eq:condition:phi:set2}).

{\em Proof of (\ref{eq:majun})}.  As $W_n\un=\un$, we have $\Jo(W_n\otimes
I_d)=\Jo(W_n\otimes I_d)\Jo$.  As a consequence, $\othn = \Jo(W_n\otimes
I_d)(\othnmu+\gamma_n \bsYn)$. We expand the square Euclidean norm of the
latter vector:
  $$
  |\othn|^2 = (\othnmu+\gamma_n
  \bsY_n)^T(\{W_n^T(I_N-\un\un^T/N)W_n\}\otimes I_d)(\othnmu+\gamma_n
  \bsY_n)\ .
  $$
  Integrate both sides of the above equation w.r.t. the r.v. $W_n$; by
  assumption~{\bf \ref{hyp:model}\ref{hyp:model:c})}
$$
\bE[|\othn|^2\,|\cF_{n-1},\bsY_n] \leq \rho_n |\othnmu+\gamma_n \bsY_n|^2\ .
$$
Under Assumption {\bf\ref{hyp:vanish}}, $\lim_n n (1-\rho_n) =+\infty$:
then, there exists $n_0$ such that $\rho_n<1$ for any $n\geq n_0$. We obtain:
$$
\bE[|\othn|^2] \leq \rho_n \bE[|\othnmu|^2]+2\gamma_n\bE[|\othnmu|\,
|\bsY_n|]+\gamma_n^2\bE[|\bsY_n|^2] \ ,
$$
for any $n\geq n_0$. From Cauchy-Schwartz inequality, $\bE[|\othnmu|\,
|\bsY_n|]\leq \sqrt{u_{n-1}}(\bE[|\bsY_n|^2])^{1/2}$. Thus,
$$
u_n \leq \rho_n u_{n-1}+2\gamma_n\sqrt{u_{n-1}}(\bE[|\bsY_n|^2])^{1/2}+\gamma_n^2\bE[|\bsY_n|^2] \ .
$$
By assumption~ST{\bf \ref{ST:2})}, we have the following estimate
$\bE[|\bsY_n|^2]\leq C_1\left(1 + v_{n-1} + u_{n-1} \right)$.  This completes
the proof of~(\ref{eq:majun}), for any constant $M$ larger than $1+C_1$.

{\em Proof of (\ref{eq:majvn})}.  Lemma~\ref{lem:growth-nablaV} is applied with
$\theta \leftarrow \athn$ and $\theta' \leftarrow \athnmu$.  We have to
evaluate the difference $\athn-\athnmu$. By~(\ref{eq:algo}),
$$
\athn = (\frac{\un^TW_n}N\otimes I_d)\left(\thnmu+\gamma_{n} \bsYn\right) \ .
$$
Therefore,
\begin{eqnarray}
\athn - \athnmu &=& \left(\frac{\un^TW_n-\un^T}N\otimes I_d\right)\thnmu + \left(\frac{\un^TW_n}N\otimes I_d\right)\gamma_n\bsYn\nonumber\\
 &=& \left(\frac{\un^TW_n-\un^T}N\otimes I_d\right)\othnmu + \left(\frac{\un^TW_n}N\otimes I_d\right)\gamma_n\bsYn\ ,
\label{eq:diff}
\end{eqnarray}
where the second equality is due to the fact that $W_n$ is row-stochastic.
Under Assumption~{\bf \ref{hyp:model}\ref{hyp:model:a})}, $\bE(W_n)$ is doubly
stochastic. Thus, using the assumption~{\bf \ref{hyp:model}\ref{hyp:model:c})}:
\begin{equation}
  \label{eq:moment1Dif}
  \bE[\athn - \athnmu|\cF_{n-1}] = \gamma_n  \int \la \bs y \ra \ \mu_{\thnmu}(d \bs y)  .
\end{equation}
Plugging~(\ref{eq:moment1Dif}) into~(\ref{eq:V-taylor}), there exists $C'$ such that for any $n$,
$$
 \bE[ V(\athn)|\cF_{n-1}] \leq V(\athnmu) +\gamma_n \nabla V(\athnmu)^T \int \la \bs y \ra \ \mu_{\thnmu}(d \bs y) + C' \bE[|\athn-\athnmu|^2|\cF_{n-1}] \ .
 $$
By the condition~{\bf \ref{hyp:V}\ref{hyp:V:b})}, the quantity $-\nabla
 V(\athnmu)^Th(\athnmu)$ is positive; therefore,
\begin{multline*}
  \bE[ V(\athn)|\cF_{n-1}] \leq V(\athnmu) +\gamma_n \nabla
  V(\athnmu)^T \left(\int \la \bs y \ra \ \mu_{\thnmu}(d \bs y)-h(\athnmu)\right) \\+ C'
  \bE[|\athn-\athnmu|^2|\cF_{n-1}] \ .
\end{multline*}
Using successively the conditions ST~{\bf \ref{ST:2})} and
Lemma~\ref{lem:growth-nablaV}, we have the estimate
\begin{eqnarray*}
\nabla V(\athnmu)^T \left(\int \la \bs y \ra \ \mu_{\thnmu}(d \bs y)-h(\athnmu)\right)&\leq& |\nabla V(\athnmu)|\, C_2|\othnmu| \\
&\leq& \sqrt{C}C_2 \sqrt{V(\athnmu)}\, |\othnmu|\ .
\end{eqnarray*}
Using Cauchy-Schwartz inequality, the expectation of
the above quantity is no larger than $\sqrt{C}C_2\sqrt{u_{n-1}v_{n-1}}$.
We obtain:
\begin{equation}
  \label{eq:taylorV2}
 v_n \leq v_{n-1} +\gamma_n \sqrt{C}C_2\sqrt{u_{n-1}(1+u_{n-1}+v_{n-1})}+ C' \bE[|\athn-\athnmu|^2] \ ,
\end{equation}
where we used the fact that $u_{n-1}\geq 0$. We now need to find an estimate for $\bE[|\athn-\athnmu|^2]$.
Using Minkowski's inequality on~(\ref{eq:diff}),
\begin{equation}
\label{eq:mink}
\bE[|\athn-\athnmu|^2]^{1/2} \leq \bE\left[\left| \left(\frac{\un^TW_n-\un^T}N\otimes I_d\right)\othnmu \right|^2\right]^{1/2} + \bE\left[\left|\left(\frac{\un^TW_n}N\otimes I_d\right)\gamma_n\bsYn \right|^2\right]^{1/2} 
\end{equation}
Focus on the first term of the RHS of the above inequality. Remark that
$$
\bE[(W_n^T\un-\un)(\un^TW_n-\un^T) |\cF_{n-1}] = \bE[W_n^T\un\un^TW_n]-\un\un^T\ ,
$$
where we used the assumption~{\bf \ref{hyp:model}\ref{hyp:model:c})} along
with the fact that $\bE(W_n)$ is doubly stochastic (see the condition~{\bf
  \ref{hyp:model}\ref{hyp:model:a})}).  Upon noting that the entries of
$W_n$ are in $[0,1]$ (as a consequence of assumption~{\bf
  \ref{hyp:model}\ref{hyp:model:a})}), the spectral norm of
$\bE[W_n^T\un\un^TW_n]-\un\un^T$ is bounded. Thus, there exists a constant $C'$
such that:
$$
\bE\left[\left| \left(\frac{\un^TW_n-\un^T}N\otimes I_d\right)\othnmu \right|^2\right] \leq C' u_{n-1}\ .
$$
By similar arguments, there exists a constant $C''$ such that 
\begin{eqnarray*}
\bE\left[\left|\left(\frac{\un^TW_n}N\otimes I_d\right)\gamma_n\bsYn \right|^2\right]  &\leq& C'' \gamma_n^2\, \bE|\bsY_n|^2 \\
&\leq& C_2 C'' \gamma_n^2\,\left(1 + u_{n-1} + v_{n-1} \right)
\end{eqnarray*}
where we used assumption ST{\bf \ref{ST:2})}. Putting this
together with~(\ref{eq:mink}),
\begin{eqnarray*}
\bE[|\athn-\athnmu|^2] &\leq& ( \sqrt{C'}\sqrt{u_{n-1}} + \gamma_n\sqrt{C_2 C''}\,\sqrt{1 + u_{n-1} + v_{n-1} })^2\\
 &\leq& C( u_{n-1} + \gamma_n^2\,(1 + u_{n-1} + v_{n-1}) + \gamma_n\sqrt{u_{n-1}(1 + u_{n-1} + v_{n-1})} )\ .
\end{eqnarray*}
where $C>0$ is some constant chosen large enough. Plugging the above inequality into~(\ref{eq:taylorV2}),
\begin{multline*}
 v_n \leq v_{n-1} + (C' C) u_{n-1} 
+ (\sqrt{C}C_2+  C' C) \gamma_n \sqrt{u_{n-1}(1+u_{n-1}+v_{n-1})}\\
+ C' C  \gamma_n^2\,(1 + u_{n-1} + v_{n-1})\ .
\end{multline*}
This proves that~(\ref{eq:majvn}) holds for any $M$ chosen large enough.

{\em Proof of $\sup_{n} \bE\left[| \bsYn |^2 \right] < \infty$.}  By
Assumptions~{\bf\ref{hyp:model}\ref{hyp:model:c})} and ST{\bf \ref{ST:2})}:
 \begin{equation}
   \label{eq:L2bound:Y}
   \bE \left[\left| \bs Y_n \right|^2 \right]=\bE \left[ \bE_{\bth_{n-1}} \left[\left| \bs Y \right|^2 \right]\right] \leq C_2 \left(1 +  \bE
   \left[V(\la \thnmu\ra)\right] + \bE
   \left[ \left|\othnmu\right|^2 \right] \right) \ .
 \end{equation}
 The proof follows since $\sup_n \bE
   \left[V(\la \thn\ra)\right] < \infty$ and $\bE \left[
   \left|\othn\right|^2 \right] \leq \sum_n \bE \left[ \left|\othn\right|^2
 \right] < \infty$.
\end{proof}

\begin{lemma}\label{lem:stability:athn}
  Suppose Assumptions {\bf \ref{hyp:model}a-b)}, {\bf \ref{hyp:step}}, {\bf
    \ref{hyp:V}\ref{hyp:V:a}-\ref{hyp:H:b}}) and {\bf\ref{hyp:vanish}}. Assume
  in addition ST{\bf \ref{ST:1}-\ref{ST:2})}.  Then, $\PP{\limsup_n
    |\athn| < \infty}=1$.
\end{lemma}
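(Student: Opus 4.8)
The plan is to deduce the boundedness of $(\athn)$ from the boundedness of $V(\athn)$, exploiting that the level sets of $V$ are compact. Indeed, by Assumption~\ref{hyp:V}\ref{hyp:V:d}) the set $\{\theta\in\bR^d:V(\theta)\le M\}$ is compact for every $M$, so on the event $\{\sup_n V(\athn)<\infty\}$ the whole sequence $(\athn)$ lies in a compact set and is therefore bounded. Consequently it suffices to establish $\PP{\sup_n V(\athn)<\infty}=1$, and I would obtain this by showing that $(V(\athn))_n$ is an almost-supermartingale with summable perturbations.

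To build this inequality I would reuse the conditional computations from the proof of Lemma~\ref{lem:agreement}, stopping \emph{before} expectations are taken. Applying Lemma~\ref{lem:growth-nablaV} (inequality~\eqref{eq:V-taylor}) with $\theta\leftarrow\athnmu$ and $\theta'\leftarrow\athn$, taking conditional expectation, and using~\eqref{eq:moment1Dif}, Assumption~\ref{hyp:V}\ref{hyp:V:b}) (so that $\nabla V(\athnmu)^T h(\athnmu)\le0$), the bound $|\nabla V(\athnmu)|\le\sqrt{C\,V(\athnmu)}$ from Lemma~\ref{lem:growth-nablaV}, and condition ST\ref{ST:2}), one reaches a constant $C$ with
\[
\bE\left[V(\athn)\,\middle\vert\,\cF_{n-1}\right]\le V(\athnmu)+C\,\gamma_n\sqrt{V(\athnmu)}\,|\othnmu|+C\,\bE\left[|\athn-\athnmu|^2\,\middle\vert\,\cF_{n-1}\right].
\]
I would then control the last term conditionally exactly as in~\eqref{eq:diff}--\eqref{eq:mink}: the spectral norm of $\bE[W_n^T\un\un^TW_n]-\un\un^T$ is bounded, and by ST\ref{ST:2}) one has $\int|\bs y|^2\,\mu_{\thnmu}(d\bs y)\le C(1+V(\athnmu)+|\othnmu|^2)$, which yields $\bE[|\athn-\athnmu|^2|\cF_{n-1}]\le C\big(|\othnmu|^2+\gamma_n^2(1+V(\athnmu)+|\othnmu|^2)\big)$.

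The decisive step is to absorb the cross term through Young's inequality $\gamma_n\sqrt{V(\athnmu)}\,|\othnmu|\le\tfrac12\gamma_n^2V(\athnmu)+\tfrac12|\othnmu|^2$, after which every $V(\athnmu)$ contribution carries an $\cO(\gamma_n^2)$ prefactor. Collecting terms, there are a constant $K$ and an integer $n_0$ such that for $n\ge n_0$
\[
\bE\left[V(\athn)\,\middle\vert\,\cF_{n-1}\right]\le\big(1+K\gamma_n^2\big)\,V(\athnmu)+K\big(|\othnmu|^2+\gamma_n^2\big).
\]
This is precisely an almost-supermartingale inequality: the multiplicative term is summable since $\sum_n\gamma_n^2<\infty$ (Assumption~\ref{hyp:step}), and the additive term is summable almost surely because $\sum_n\gamma_n^2<\infty$ and, by Lemma~\ref{lem:agreement}\,a) together with Tonelli's theorem, $\sum_n|\othnmu|^2<\infty$ with probability one. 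The Robbins--Siegmund almost-supermartingale convergence theorem then gives that $V(\athn)$ converges almost surely to a finite random limit; in particular $\sup_n V(\athn)<\infty$ almost surely, and the compactness of the level sets concludes $\PP{\limsup_n|\athn|<\infty}=1$.

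I expect the main obstacle to be organizational rather than conceptual: passing from the expectation-level recursion used in Lemma~\ref{lem:agreement} to a pathwise (conditional) inequality whose additive remainder is summable \emph{along almost every trajectory}. The crucial input that makes this work is the almost-sure summability $\sum_n|\othnmu|^2<\infty$ coming from part~a), which tames the coupling between the disagreement vector and the averaged component; once that is in hand, the Young-type splitting and the Robbins--Siegmund theorem finish the argument without further difficulty.
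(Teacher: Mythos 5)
Your proof is correct, but it follows a genuinely different route from the paper. The paper proves this lemma by invoking the stochastic-approximation stability theorem of Andrieu et al.\ (\cite[Theorem 2.2]{andrieu:2005}) applied to the recursion \eqref{eq:SAmarkovnoise}: it verifies \emph{(i)} the recurrence property $\PP{\liminf_n V(\athn) < \infty}=1$, obtained from Fatou's lemma and the bound $\sup_n \bE V(\athn)<\infty$ of Lemma~\ref{lem:agreement}~b), and \emph{(ii)} the almost-sure convergence of the perturbation series $\sum_n \gamma_n\left(\la (W_n\otimes I_d)(\bsY_n+\gamma_n^{-1}\othnmu)\ra - h(\athnmu)\right)$, split into an $L^2$-bounded martingale plus a bias term controlled via ST\ref{ST:2}) and $\sum_n\bE|\othn|^2<\infty$. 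You instead bypass the external theorem entirely: you upgrade the expectation-level recursion of Lemma~\ref{lem:agreement} to a pathwise conditional drift inequality (legitimate, since by Assumption~{\bf\ref{hyp:model}\ref{hyp:model:c})} $W_n$ is independent of $\cF_{n-1}$ and $\bsY_n$ has conditional law $\mu_{\thnmu}$, so every step of that proof holds before outer expectations are taken), absorb the cross term by Young's inequality, and apply Robbins--Siegmund, with the almost-sure summability $\sum_n|\othn|^2<\infty$ coming from Lemma~\ref{lem:agreement}~a) and Tonelli. Both arguments rest on Lemma~\ref{lem:agreement}, but they exploit different parts of it: the paper needs both moments statements (parts a) and b)), while you only need part a), used pathwise. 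Your approach is more self-contained and yields the slightly stronger conclusion that $V(\athn)$ converges almost surely (not merely that it is bounded), after which compactness of the level sets (Assumption~{\bf\ref{hyp:V}\ref{hyp:V:d})}, which is indeed among the hypotheses) gives $\PP{\limsup_n|\athn|<\infty}=1$. The paper's approach, on the other hand, sets up exactly the martingale-plus-bias machinery that is reused in the convergence proofs of Theorems~\ref{the:cv} and~\ref{the:cv:vanish}, and the cited theorem delivers information beyond boundedness. One small point of care in your write-up: the term $\gamma_n^2|\othnmu|^2$ produced when expanding $\bE[|\athn-\athnmu|^2\,|\,\cF_{n-1}]$ must be absorbed into $K|\othnmu|^2$ using $\gamma_n\to 0$, which is fine for $n$ large but should be said explicitly.
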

\begin{proof} The sequence $(\athn)_{n \geq 1}$ satisfies the equation~(\ref{eq:SAmarkovnoise}).
  The proof is an application of \cite[Theorem 2.2.]{andrieu:2005}: in order to
  apply this Theorem, we only have to prove that with probability one
  \textit{(i)} the sequence $(\athn)_{n \geq 1}$ is infinitely often in a level
  set $\{V \leq M \}$ i.e.  $\PP{\liminf_n V(\athn) < \infty}=1$ and
  \textit{(ii)}
\[
 \sum_n \gamma_n  \left( (W_n\otimes I_d)(\bsY_n+\gamma_n^{-1}\othnmu) - h(\athnmu) \right)< \infty \ .
\]
For the recurrence property, we have
\[
\bE\left( \liminf_n V(\athn) \right) \leq \liminf_n \bE\left( V(\athn) \right)
\leq \sup_n \bE\left( V(\athn) \right) \ .
\]
By Lemma~\ref{lem:agreement}, the RHS is finite thus showing that
$\PP{\liminf_n V(\athn) < \infty}=1$. For the second property, we write $\la
(W_n\otimes I_d)(\bsY_n+\gamma_n^{-1}\othnmu) \ra - h(\athnmu) =
e_{n}+\xi_{n-1}$ where
\begin{eqnarray*}
  e_{n}&\eqdef& \la (W_n\otimes I_d)(\bsY_n+\gamma_n^{-1}\othnmu)\ra - \int \la \bs y \ra \mu_{\thnmu}(d \bs y)  \\
  \xi_{n-1} &\eqdef& \int \la \bs y \ra  \mu_{\thnmu}(d \bs y) - \int \la \bs y \ra \mu_{\un \otimes \la \thnmu \ra}(d \bs y) \ .
\end{eqnarray*}
By Assumption ST{\bf \ref{ST:2})} and the inequality $2 a b \leq a^2 +
b^2$, there exists a constant $C$ such that
\begin{equation}
\bE\left|\sum_{n\geq 1} \gamma_n \xi_{n-1}\right| 
\leq  C \  \left(\sum_{n\geq 1}\gamma_n^2 +\sum_{n\geq 1}\bE\left|\othnmu\right|^2 \right)\ . \label{eq:xideuxsommeBis}
\end{equation}
Therefore, the RHS in (\ref{eq:xideuxsommeBis}) is finite under the condition~{\bf
  \ref{hyp:step}} and Lemma~\ref{lem:agreement}, thus implying that $
\sum_{n\geq 1}\gamma_n \xi_n$ converges w.p.1.
Since $\bE\left [e_n\,|\cF_{n-1}\right]=0$, the sequence $\left(S_n \eqdef
  \sum_{k=1}^n \gamma_k e_k\right)_{n\geq 1}$ is a martingale.  We prove that
it converges almost surely by estimating its second order moment. For any $k\geq 1$,
\begin{eqnarray*}
  \bE\left[ |S_k|^2\right] &\leq&  \sum_{n\geq 1}\gamma_n^2\, \bE\left[ \left|e_n\right|^2\right] \\
&\leq& \sum_{n\geq 1}\gamma_n^2\, \bE\left[ (\bY_n + \gamma_n^{-1}\othnmu)^TP_n (\bY_n + \gamma_n^{-1}\othnmu)\right]
\end{eqnarray*}
where we set $P_n:=N^{-2}W_n^T\un\un^TW_n\otimes I_d$. Note that $P_n$ is independent of $Y_n$ conditionally to $\cF_{n-1}$.
Since $W_n$ is a stochastic matrix, its spectral norm is bounded uniformly in $n$. 
Therefore, there exists a constant $C>0$ such that:
$$
\bE\left[ |S_n|^2\right] \leq C \sum_{n\geq 1}\gamma_n^2\, \bE\left[
  \left|\bY_n + \gamma_n^{-1}\othnmu\right|^2\right] \leq 2C \sum_{n\geq
  1}\gamma_n^2\, \bE\left[ |\bY_n|^2\right] + 2C \sum_{n\geq 1}\bE\left[
  |\othnmu|^2\right] \ .
$$
By Lemma~\ref{lem:agreement} and
Assumption~{\bf \ref{hyp:step}} it follows that $\textsc{}\sup_n \bE\left[
  |S_n|^2\right]$ is finite thus implying that the martingale $(S_n)_{n \geq 1}$
converges almost surely to a r.v. which is finite w.p.1. (see e.g.
\cite[Corollary 2.2.]{hall:heyde:1980}).  This concludes the proof.

\end{proof}

\subsection{Proof of Theorem~\ref{the:moment2bounded}}
\label{sec:proof:the:moment2bounded}
Set $V_n\eqdef (I_N-\bs 1\bs 1^T/N) W_n$ and for any $1\leq k\leq n$,
\begin{equation}
  \label{eq:product:Vn}
  \Phi_{n,k} := (V_n\otimes I_d)(V_{n-1}\otimes I_d)\cdots(V_k\otimes I_d) \ .
\end{equation}
Note that by Assumptions~{\bf
  \ref{hyp:model}\ref{hyp:model:c}-\ref{hyp:model:iid})},
  \begin{eqnarray}
\|\Phi_{n,k}X\|_2^2 &=& \bE[X^T\Phi_{n-1,k}^T(V_n^TV_n\otimes I_d)\Phi_{n-1,k}X]
= \bE[X^T\Phi_{n-1,k}^T\bE(V_n^TV_n\otimes I_d  )\Phi_{n-1,k}X] \nonumber \\
&\leq& \rho \  \bE[X^T\Phi_{n-1,k}^T\Phi_{n-1,k} X]\ = \rho \|\Phi_{n-1,k}X\|_2^2\ .\label{lem:product:Vn}
\end{eqnarray}
From (\ref{eq:algo}) and since $\Jo(W_n\otimes I_d) = \Jo(W_n\otimes I_d)\Jo=
(V_n\otimes I_d)\Jo$ by Assumption~{\bf \ref{hyp:model}\ref{hyp:model:a})}, it
holds for any $n \geq 1$, $\othn = (V_n\otimes I_d)(\othnmu + \gamma_n \Yon)$.
By induction,
\begin{equation}
\label{eq:JothnTmp}
\othn = \sum_{k=1}^n \gamma_k \Phi_{n,k} \Yok + \Phi_{n,1}\bs\theta_{\bot,0}
\end{equation}
where $\Phi_{n,k}$ is defined by (\ref{eq:product:Vn}).  By
\eqref{lem:product:Vn} and Assumption~{\bf \ref{hyp:model}\ref{hyp:model:iid})},
the second term in the RHS
of~(\ref{eq:JothnTmp}) is a $\cO_{L^2}(\rho^{n/2})$. 
We now consider the first term in the RHS of~(\ref{eq:JothnTmp}).  Using
Minkowski's inequality and Equation~(\ref{lem:product:Vn})
\begin{align*} 
\|  \sum_{k=1}^n \gamma_k \Phi_{n,k} \Yok    \un_{\sup_{\ell \leq n-1} |\bs \theta_\ell| \leq M}  \|_2 & \leq \sum_{k=1}^n \gamma_k \|\Phi_{n,k} \Yok \un_{\sup_{\ell \leq n-1} |\bs \theta_\ell| \leq M} \|_2  \\
&  \leq 
 \sum_{k=1}^n \gamma_k \sqrt{\rho}^{n-k+1}\|\Yok\un_{\sup_{\ell \leq k-1} |\bs \theta_\ell| \leq M}  \|_2 \ .
\end{align*} 
By~\cite[Result~178,pp.38]{pol-sze-24}, the RHS is upper bounded by
$\limsup_{n\to\infty} \|\Yon \un_{|\bs \theta_{n-1}| \leq M} \|_2 \rho
(1-\sqrt{\rho})^{-1}$.  Under Assumption~{\bf \ref{hyp:H}\ref{hyp:H:a})}, this upper bound is finite (the proof follows
the same lines as in the proof of Lemma~\ref{lem:average} and is omitted). This
concludes the proof.
\subsection{Proof of Theorem~\ref{th:clt:drm}}
\label{proof:th:clt:drm}
Assumption~\ref{hyp:step} implies that $\lim_n \rho^{n/2} \gamma_n^{-2} =0$.
Upon noting that 
\[
\PP{\bigcup_M \{\sup_n |\thn| \leq M \} \vert \lim_q \bs \theta_q = \un \otimes
  \theta_\star}=1 \ ,
\]
Theorem~\ref{the:moment2bounded} implies that the sequence of r.v.  $(
\gamma_n^{-1/2} \othn)_{n}$ converges in probability to zero under the
conditional probability $\PP{\cdot \vert \lim_q \bs \theta_q = \un \otimes
  \theta_\star}$. Since $\thn = \un\otimes \la \thn \ra + \othn$, it remains to
prove that the sequence of r.v.  $(\gamma_n^{-1/2}(\athn - \theta_\star))_{n
  \geq 0}$ converges in distribution to $Z$ under the conditional distribution
given the event $\{\lim_q \theta_q = \un \otimes \theta_\star\}$.  To that
goal, we write 
\[
\athn = \athnmu + \gamma_n h\left( \athnmu \right) +\gamma_n e_n + \gamma_n
\xi_n
\]
where $\xi_{n} \eqdef \int \la \bs y \ra  \mu_{\thnmu}(d \bs y) - \int \la \bs y \ra \mu_{\un \otimes \la \thnmu \ra}(d \bs y)$ and 
\[
e_{n}\eqdef \la (W_n\otimes I_d)(\bsY_n+\gamma_n^{-1}\othnmu)\ra
- \int \la \bs y \ra  \mu_{\thnmu}(d \bs y) = \la \bsY_n \ra - \int \la \bs y \ra  \mu_{\thnmu}(d \bs y) \ ,
\]
since $\un^T W_n = \un^T$.  We then check the conditions C1 to C4 of
\cite[Theorem 1]{fort:2011} (see also \cite[Theorem 1]{pelletier:1998}).  Under
the assumptions {\bf \ref{assum:clt:thetastar}} and {\bf
  \ref{assum:clt:stepsize}\ref{assum:clt:stepsize:a})}, the conditions C1 and
C4 of \cite[Theorem 1]{fort:2011} are satisfied. We now prove C2b: there exists
a constant $C$ such that 
\begin{align*}
  \bE\left[ |e_{n+1}|^{2 + \tau} \un_{|\thn - \un \otimes \theta_\star | \leq
      \delta}\right] & \leq C \ \bE\left[ | \int \la \bs y \ra \mu_{\thn}(d \bs
    y)|^{2 +\tau} \un_{|\thn - \un \otimes \theta_\star | \leq \delta} \right]
  + C \ \bE\left[ | \la \bsY_{n+1} \ra |^{2 +\tau} \un_{|\thn - \un \otimes
      \theta_\star | \leq \delta} \right]
  \\
  & \leq 2C \sup_{|\bth - \un \otimes \theta_\star | \leq \delta} \int |\la \bs
  y \ra |^{2 +\tau} \mu_{\bth}(d \bs y)
\end{align*}
and the RHS is finite under Assumption {\bf \ref{assum:clt:Y}}. For C2c, we
have
\begin{align*}
  \bE \left[ e_{n+1} e_{n+1}^T \vert \cF_{n}\right] & = \left\{ \int \la \bs y
    \ra \la \bs y \ra^T \mu_{\thn}(d \bs y) - \left(\int \la \bs y \ra
      \mu_{\thn}(d \bs y) \right) \left( \int \la \bs y \ra \mu_{\thn}(d \bs y)
    \right)^T \right\}.
\end{align*}
By Assumption {\bf \ref{assum:clt:Y}}, this term converges w.p.1 to $\Upsilon$
on the set $\{\lim_k \thk = \un \otimes \theta_\star \}$. This concludes the
proof of C2.

We now consider the condition C3 of \cite{fort:2011} with $r_n = \xi_n + e_n
\un_{|\thnmu - \un \otimes \theta_\star | > \delta}$: we prove that for any
$M>0$, $ \gamma_n^{-1/2} r_n \un_{\sup_k |\thk| \leq M}\un_{\lim_k \bth_k
    = \un \otimes \theta_\star} = \mathcal{O}_{w.p.1}
o_{L^1}(1)$.  By Assumption~{\bf \ref{hyp:H}\ref{hyp:H:abis})}, there exists a
constant $C$ such that
\[
\gamma_n^{-1/2} \bE\left[|\xi_n | \un_{\lim_k \bth_k
    = \un \otimes \theta_\star}\un_{\sup_k |\thk| \leq M}\right]  \leq C \ \left( \gamma_n^{-1}
  \bE\left[|\othn |^2 \un_{\sup_k |\thk| \leq M}\right] \right)^{1/2}
\]
and the RHS tends to zero as $n \to \infty$ by
Theorem~\ref{the:moment2bounded}. On the set $\{\lim_n \thn = \un \otimes
\theta_\star \}$, the r.v. $e_n \un_{|\thnmu - \un \otimes \theta_\star | >
  \delta}$ is null for all large $n$. This concludes the proof of the condition C3
of \cite{fort:2011}, and the proof of Theorem~\ref{th:clt:drm}.

\subsection{Proof of Theorem~\ref{th:clt:drm:aver}}
\label{proof:th:clt:drm:aver}
We preface the proof by a preliminary result, established by \cite[Theorem
2]{fort:2011} (see also \cite{delyon:2000} for a similar result obtained under
stronger assumptions).
\begin{theo}
  \label{theo:rappel:TCLaver}
  Let $(\gamma_n)_n$ be a deterministic positive sequence such that $
  \log(\gamma_k/\gamma_{k+1}) = o(\gamma_k)$ and satisfying Assumption {\bf
    \ref{assum:clt:stepsize}\ref{assum:clt:stepsize:c}-\ref{assum:clt:stepsize:b})}.
  Consider the random sequence $(u_n)_n$ given by 
\[
u_{n+1} = u_n + \gamma_{n+1} h(u_n) + \gamma_{n+1} e_{n+1} + \gamma_{n+1} \xi_{n+1} \ , \qquad u_0 \in \bR^d \ ,
\]
where 
\renewcommand{\labelenumi}{AVER\theenumi .}
\begin{enumerate} 
\item \label{hyp:AVER1} $u_{\star}$ is a zero of the mean field: $h(u_{\star})=0$. The
  mean field $h:\bR^d \to \bR^{d}$ is twice continuously differentiable (in a
  neighborhood of $u_\star$) and $\nabla h(u_{\star})$ is a Hurwitz matrix.
\item \label{hyp:AVER2} \begin{enumerate}[(i)]
 \item  \ $(e_{n})_{n\geq 1}$ is a $\mathcal{F}_n$-adapted
   martingale-increment sequence.
 \item For any $M>0$, there exist $\tau>0$ s.t.  $\sup_{k} \bE \left[|e_{k}
     |^{2+\tau}\un_{ \sup_{\ell \leq k-1}|u_{\ell}-u_{\star}|\leq
       M}\right]<\infty$.
 \item There exists a positive definite (random) matrix $U_\star$ such that on
   the set $\{\lim_q u_q = u_\star \}$, $\lim_k \bE
   \left[e_{k}e_{k}^{T}|\mathcal{F}_{k-1} \right] = U_\star $ almost-surely.
 \end{enumerate}
\item \label{hyp:AVER3} $(\xi_{n})_{n\geq 1}$ is a $\mathcal{F}_n$-adapted sequence s.t. 
 \begin{enumerate}[(i)] 
 \item $ \gamma_n^{-1/2} \ | \xi_{n} | \un_{\lim_{q} u_{q}=u_{\star}}
   \un_{\sup_n |u_n| \leq M} = \ \mathcal{O}_{w.p.1}(1) \mathcal{O}_{L^2}(1)$
   for any $M>0$.
 \item $n^{-1/2} \sum_{k=0}^n \xi_{k+1} \un_{\lim_{q} u_{q}= u_{\star}}$
   converges to zero in probability.
\end{enumerate}
\end{enumerate}
Then for any $t \in \bR^d$,
  \begin{multline*}
    \lim_n \bE\left[\un_{\lim_q u_q = \theta_\star} \ \exp\left( i \sqrt{n} \ 
        t^T \left( \frac{1}{n} \sum_{k=1}^n u_k - u_\star \right)\right) \right]  \\
    = \bE\left[\un_{\lim_q u_q = u_\star} \ \exp \left(- \frac{1}{2}t^T \nabla
        h(u_\star)^{-1} \ U_\star \ \nabla h(u_\star)^{-T} t \right)
    \right] \ .
  \end{multline*}
\end{theo}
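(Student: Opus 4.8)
The final statement is the generic Polyak--Ruppert averaging Central Limit Theorem for a stochastic approximation recursion with a Hurwitz mean field, taken from \cite[Theorem~2]{fort:2011} (see also \cite{pelletier:1998,delyon:2000}); I would either invoke that reference directly or reproduce its argument as follows. The whole analysis lives on the convergence event $A \eqdef \{\lim_q u_q = u_\star\}$, handled throughout by inserting the indicator $\un_A$, since all hypotheses AVER\ref{hyp:AVER1}--AVER\ref{hyp:AVER3} are stated on this set. Write $\Delta_n \eqdef u_n - u_\star$ and $H \eqdef \nabla h(u_\star)$, which is invertible because it is Hurwitz (AVER\ref{hyp:AVER1}). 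The first ingredient is a Taylor expansion of the $C^2$ mean field: since $h(u_\star)=0$, on $A$ one has $h(u_n) = H\Delta_n + \rho_n$ with $|\rho_n|\leq C|\Delta_n|^2$ for large $n$, so the error recursion reads $\Delta_{n+1} = (I+\gamma_{n+1}H)\Delta_n + \gamma_{n+1}(e_{n+1}+\xi_{n+1}+\rho_n)$. The second ingredient is the $L^2$ rate $\Delta_n\un_A = \mathcal{O}_{L^2}(\sqrt{\gamma_n})$: because $H$ is Hurwitz, the martingale increments $e_n$ are $L^2$-bounded on $A$ (AVER\ref{hyp:AVER2}(ii)), and $\|\xi_n\un_A\|_2 = \mathcal{O}(\sqrt{\gamma_n})$ (AVER\ref{hyp:AVER3}(i)), a standard recursive Lyapunov estimate on $\bE[|\Delta_n|^2\un_A]$ in the slow-step regime $\log(\gamma_k/\gamma_{k+1})=o(\gamma_k)$ yields this rate.

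The heart of the proof is the Polyak--Ruppert rewriting. Solving the linearized recursion for $H\Delta_n$ gives, for large $k$ on $A$,
\[
\Delta_k = H^{-1}\Big[ \gamma_{k+1}^{-1}(\Delta_{k+1}-\Delta_k) - e_{k+1} - \xi_{k+1} - \rho_k \Big],
\]
and averaging over $k$ and multiplying by $\sqrt{n}$ produces, on $A$ and up to an $\mathcal{O}_{L^2}(1/\sqrt{n})$ boundary term from the index shift,
\[
\sqrt{n}\Big(\tfrac1n\sum_{k=1}^n u_k - u_\star\Big) = H^{-1}\Big[ T_n - \tfrac1{\sqrt n}\sum_{k} e_{k+1} - \tfrac1{\sqrt n}\sum_{k}\xi_{k+1} - \tfrac1{\sqrt n}\sum_{k}\rho_k \Big],
\]
where $T_n \eqdef \tfrac1{\sqrt n}\sum_{k}\gamma_{k+1}^{-1}(\Delta_{k+1}-\Delta_k)$. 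I would then show that every term except the martingale sum is negligible. An Abel summation gives $\sum_k \gamma_{k+1}^{-1}(\Delta_{k+1}-\Delta_k) = \gamma_n^{-1}\Delta_n - \gamma_1^{-1}\Delta_0 + \sum_k \gamma_k^{-1}(1-\gamma_k/\gamma_{k+1})\Delta_k$; using $\Delta_k\un_A=\mathcal{O}_{L^2}(\sqrt{\gamma_k})$, the boundary piece is $\mathcal{O}_{L^2}((n\gamma_n)^{-1/2})\to 0$ by $\lim_n n\gamma_n=+\infty$, while the bulk piece is bounded in $L^1$ by $\tfrac1{\sqrt n}\sum_k \gamma_k^{-1/2}|1-\gamma_k/\gamma_{k+1}|\to 0$, exactly Assumption~{\bf \ref{assum:clt:stepsize}\ref{assum:clt:stepsize:b})}. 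The remainder obeys $\|\rho_k\un_A\|_1\leq C\|\Delta_k\un_A\|_2^2=\mathcal{O}(\gamma_k)$, so $\tfrac1{\sqrt n}\sum_k\rho_k\un_A\to 0$ in $L^1$ by $\lim_n n^{-1/2}\sum_{k=1}^n\gamma_k=0$; and $\tfrac1{\sqrt n}\sum_k\xi_{k+1}\un_A\to 0$ in probability is AVER\ref{hyp:AVER3}(ii).

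It remains to treat the dominant term $\tfrac1{\sqrt n}\sum_k e_{k+1}$. Since $(e_n)$ is a martingale-increment sequence whose conditional covariances converge on $A$ to $U_\star$ (AVER\ref{hyp:AVER2}(iii)) and which satisfies a conditional Lindeberg condition coming from the uniform $2+\tau$ moment bound (AVER\ref{hyp:AVER2}(ii)), the martingale stable CLT (e.g.\ \cite[Theorem~3.2]{hall:heyde:1980}) shows that, under $\bP(\cdot\,|\,A)$ and conditionally on the $\sigma$-field carrying $U_\star$, this sum is asymptotically centered Gaussian with covariance $U_\star$. In characteristic-function form this reads $\lim_n \bE[\un_A\exp(it^T\tfrac1{\sqrt n}\sum_k e_{k+1})] = \bE[\un_A\exp(-\tfrac12 t^T U_\star t)]$. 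Applying $-H^{-1}$ replaces $U_\star$ by $H^{-1}U_\star H^{-T}=\nabla h(u_\star)^{-1}U_\star\nabla h(u_\star)^{-T}$, and combining with the negligibility of the three other terms (Slutsky) yields the announced limit.

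The main obstacle is twofold. First, the whole argument must be run conditionally on the convergence event $A$ with a possibly \emph{random} limiting covariance $U_\star$; this forces a \emph{stable} (mixed-normal) martingale CLT rather than a plain one, which is precisely why the conclusion is phrased through characteristic functions with $U_\star$ inside the expectation. Care is needed to verify the conditional Lindeberg condition on $A$ from the $2+\tau$ moment bound, and to check that localizing by $\un_A$ and by the indicators $\un_{\sup_n|u_n|\leq M}$ appearing in AVER\ref{hyp:AVER2}--AVER\ref{hyp:AVER3} does not break the martingale structure. Second, establishing the $L^2$ rate $\Delta_n\un_A=\mathcal{O}_{L^2}(\sqrt{\gamma_n})$ is the quantitative input that kills both the Abel boundary term and the quadratic remainder; this is where the slow-step hypothesis $\log(\gamma_k/\gamma_{k+1})=o(\gamma_k)$ is essential, since it puts the recursion in the regime where $\bE|\Delta_n|^2\asymp\gamma_n$ and averaging recovers the optimal $1/\sqrt{n}$ rate with covariance $\nabla h(u_\star)^{-1}U_\star\nabla h(u_\star)^{-T}$.
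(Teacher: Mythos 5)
The paper does not prove this theorem at all: it is a recalled result (note the label \emph{rappel}), stated and used verbatim as \cite[Theorem 2]{fort:2011}, which is exactly your primary suggestion of invoking that reference directly. Your accompanying sketch (linearization of $h$ around $u_\star$, the $L^2$ rate $\mathcal{O}_{L^2}(\sqrt{\gamma_n})$, the Abel-summation elimination of the $\gamma_{k+1}^{-1}(\Delta_{k+1}-\Delta_k)$ term via Assumption {\bf \ref{assum:clt:stepsize}\ref{assum:clt:stepsize:b})}, and the stable martingale CLT with random limit covariance $U_\star$) is a faithful outline of the argument behind that reference, so your proposal is consistent with the paper's treatment.
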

{\em Proof of Theorem~\ref{th:clt:drm:aver}.}  By
Theorem~\ref{the:moment2bounded} and Assumption~{\bf
  \ref{assum:clt:stepsize}\ref{assum:clt:stepsize:b})}, $\sqrt{N}^{-1}
\sum_{n=1}^N \othn \un_{\sup_\ell |\bs \theta_\ell| \leq M}$ converges in $L^2$
to zero for any $M>0$. Since $\thn = \othn + \un \otimes \la \thn \ra$, we now
prove a CLT for the averaged sequence $N^{-1} \sum_{n=1}^N \la \thn \ra$. To
that goal, we check the assumptions AVER\ref{hyp:AVER1} to AVER\ref{hyp:AVER3}
of Theorem~\ref{theo:rappel:TCLaver} with $u_n = \la \thn \ra$; $e_n, \xi_n$
defined as in the proof of Theorem~\ref{th:clt:drm}.  AVER\ref{hyp:AVER1} and
AVER\ref{hyp:AVER2} can be proved along the same lines as in the proof of
Theorem~\ref{th:clt:drm}; details are omitted.  Finally, by Assumption~{\bf
  \ref{hyp:H}\ref{hyp:H:abis})} and Theorem~\ref{the:moment2bounded}, $ \bE\left[
  |\xi_n |² \un_{\lim_k \bth_k = \un \otimes \theta_\star} \un_{\sup_{\ell \leq
      n-1} |\bs \theta_\ell| \leq M} \right] = \cO(\gamma_n^2)$; and
\[
\ell^{-1/2} \ \sum_{n=1}^\ell \bE\left[|\xi_n | \un_{\lim_k \bth_k = \un
    \otimes \theta_\star} \un_{\sup_{\ell \leq
      n-1} |\bs \theta_\ell| \leq M}\right] \leq C \ \ell^{-1/2} \ \sum_{n=1}^\ell \gamma_n \ .
\]
The RHS tends to zero under Assumption~{\bf
  \ref{assum:clt:stepsize}\ref{assum:clt:stepsize:b})} thus showing
AVER\ref{hyp:AVER3}.

\bibliographystyle{IEEEbib} \bibliography{biblio}

\begin{thebibliography}{10}

\bibitem{benveniste:metivier:priouret:1987}
A.~Benveniste, M.~Metivier, and P.~Priouret,
\newblock {\em Adaptive {A}lgorithms and {S}tochastic {A}pproximations},
\newblock Springer-Verlag, 1987.

\bibitem{kushner:2003}
H.J. Kushner and G.G. Yin,
\newblock {\em {Stochastic Approximation and Recursive Algorithms and
  Applications}},
\newblock Springer, 2003.

\bibitem{polyak:1990}
B.~Polyak,
\newblock ``New stochastic approximation type procedures,''
\newblock {\em Automation and remote control}, vol. 51, pp. 98--107, 1990.

\bibitem{forero-cano-giannakis-jmlr11}
P.~Forero, A.~Cano, and G.~Giannakis,
\newblock ``Consensus-based distributed support vector machines,''
\newblock {\em Journal of Machine Learning Research}, vol. 11, pp. 1663 --
  1707, 2010.

\bibitem{bianchi:jakubo:2011}
P.~Bianchi and J.~Jakubowicz,
\newblock ``On the convergence of a multi-agent projected stochastic gradient
  algorithm for nonconvex optimization,''
\newblock {\em IEEE Trans. on Automatic Control}, February 2013,
\newblock [online] arXiv:1107.2526v1.

\bibitem{lehmann-casella:point}
E.~L. Lehmann and George Casella,
\newblock {\em Theory of point estimation},
\newblock Springer Texts in Statistics. Springer-Verlag, New York, second
  edition, 1998.

\bibitem{blo-etal-cdc05}
V.D. Blondel, J.M. Hendrickx, A.~Olshevsky, and J.N. Tsitsiklis,
\newblock ``Convergence in multiagent coordination, consensus, and flocking,''
\newblock in {\em Decision and Control, 2005 and 2005 European Control
  Conference. CDC-ECC '05. 44th IEEE Conference on}, dec. 2005, pp. 2996 --
  3000.

\bibitem{lop-sayed-asap06}
C.~Lopes and A.H. Sayed,
\newblock ``Distributed processing over adaptive networks,''
\newblock in {\em Adaptive Sensor Array Processing Workshop}, June 2006, pp.
  1--5.

\bibitem{kar:2010}
S.~Kar and J.M.F. Moura,
\newblock ``{Distributed consensus algorithms in sensor networks: Quantized
  data and random link failures},''
\newblock {\em IEEE Transactions on Signal Processing}, vol. 58, no. 3, pp.
  1383--1400, 2010.

\bibitem{cat-sayed-sp10}
F.~Cattivelli and A.H. Sayed,
\newblock ``Diffusion {LMS} strategies for distributed estimation,''
\newblock {\em IEEE Trans. Signal Processing}, vol. 58, no. 3, pp. 1035--1048,
  March 2010.

\bibitem{nedic:ozdaglar:parrilo:tac-2010}
A.~Nedic, A.~Ozdaglar, and P.A. Parrilo,
\newblock ``{Constrained Consensus and Optimization in Multi-Agent Networks},''
\newblock {\em IEEE Transactions on Automatic Control}, vol. 55, no. 4, pp.
  922--938, April 2010.

\bibitem{nedic:tac-2011}
A.~Nedic,
\newblock ``{Asynchronous Broadcast-Based Convex Optimization Over a
  Network},''
\newblock {\em IEEE Transactions on Automatic Control}, vol. 56, no. 6, pp.
  1337--1351, june 2011.

\bibitem{stank-stip-tac11}
S.S. Stankovi{\'c}, M.S. Stankovi{\'c}, and D.M. Stipanovi{\'c},
\newblock ``Decentralized parameter estimation by consensus based stochastic
  approximation,''
\newblock {\em IEEE Trans. Automatic Control}, vol. 56, no. 3, pp. 531--543,
  2011.

\bibitem{bertsekas:1997}
D.~P. Bertsekas and J.~N. Tsitsiklis,
\newblock {\em {Parallel and Distributed Computation: Numerical Methods}},
\newblock Athena Scientific, 1997.

\bibitem{tsitsiklis:bertsekas:athans:tac-1986}
J.~Tsitsiklis, D.~Bertsekas, and M.~Athans,
\newblock ``Distributed asynchronous deterministic and stochastic gradient
  optimization algorithms,''
\newblock {\em Automatic Control, IEEE Transactions on}, vol. 31, no. 9, pp.
  803 -- 812, sep 1986.

\bibitem{kushner-siam87}
H.~J. Kushner and G.~Yin,
\newblock ``Asymptotic properties of distributed and communicating stochastic
  approximation algorithms,''
\newblock {\em SIAM J. Control Optim.}, vol. 25, pp. 1266 -- 1290, 1987.

\bibitem{boyd:2006}
S.~Boyd, A.~Ghosh, B.~Prabhakar, and D.~Shah,
\newblock ``{Randomized Gossip Algorithms},''
\newblock {\em IEEE Transactions on Inform. Theory}, vol. 52, no. 6, pp.
  2508--2530, 2006.

\bibitem{nedic:ozdaglar:tac-2009}
A.~Nedic and A.~Ozdaglar,
\newblock ``{Distributed Subgradient Methods for Multi-Agent Optimization},''
\newblock {\em IEEE Transactions on Automatic Control}, vol. 54, no. 1, pp.
  48--61, 2009.

\bibitem{ram:nedic:veeravalli:jota-2010}
S.~Ram, A.~Nedic, and V.~Veeravalli,
\newblock ``Distributed stochastic subgradient projection algorithms
  for convex optimization,''
\newblock {\em Journal of Optimization Theory and Applications}, vol. 147, pp.
  516--545, 2010,
\newblock 10.1007/s10957-010-9737-7.

\bibitem{nous:eusipco}
P.~Bianchi, G.~Fort, W.~Hachem, and J.~Jakubowicz,
\newblock ``{Performance of a Distributed Robbins-Monro Algorithm for Sensor
  Networks},''
\newblock in {\em EUSIPCO}, Barcelona, Spain, 2011.

\bibitem{delyon:2000}
B.~Delyon,
\newblock ``{Stochastic Approximation with Decreasing Gain: Convergence and
  Asymptotic Theory},''
\newblock {\em Unpublished Lecture Notes,
  http://perso.univ-rennes1.fr/bernard.delyon/as\_cours.ps}, 2000.

\bibitem{andrieu:2005}
C.~Andrieu, E.~Moulines, and P.~Priouret,
\newblock ``{Stability of Stochastic Approximation under Verifiable
  Conditions},''
\newblock {\em SIAM J. Control Optim.}, vol. 44, no. 1, pp. 283--312, 2005.

\bibitem{fort:2011}
G.~Fort,
\newblock ``A {C}entral {L}imit {T}heorem for a stochastic approximation
  algorithm and its {P}olyak-averaged version,''
\newblock Tech. {R}ep., Preprint, 2012,
\newblock [online]
  http://perso.telecom-paristech.fr/$\sim$gfort/Preprints/CLTforSA.pdf.

\bibitem{pelletier:1998}
M.~Pelletier,
\newblock ``Weak convergence rates for stochastic approximation with
  application to multiple targets and simulated annealing,''
\newblock {\em Annals of Applied Probability}, vol. 8, no. 1, pp. 10--44, 1998.

\bibitem{benezit:thesis}
F.~B\'en\'ezit,
\newblock {\em {Distributed Average Consensus for Wireless Sensor Networks}},
\newblock Ph.D. thesis, {\'Ecole Polytechnique F\'ed\'erale de Lausanne}, 2009.

\bibitem{aysal:2009}
T.C. Aysal, M.E. Yildiz, A.D. Sarwate, and A.~Scaglione,
\newblock ``{Broadcast Gossip Algorithms for Consensus},''
\newblock {\em IEEE Transactions on Signal Processing}, vol. 57, no. 7, pp.
  2748--2761, 2009.

\bibitem{chen:guo:gao:1988}
H.~Chen, L.~Guo, and A.~Gao,
\newblock ``Convergence and robustness of the {R}obbins-{M}onro algorithm
  truncated at randomly varying bounds,''
\newblock {\em Stochastic Processes and their Applications}, vol. 27, pp.
  217--231, 1988.

\bibitem{polyak:juditsky:1992}
B.T. Polyak and A.B. Juditsky,
\newblock ``Acceleration of stochastic approximation by averaging,''
\newblock {\em SIAM J. Control and Optimization}, vol. 30, pp. 838--855, 1992.

\bibitem{brandiere-duflo:1996}
Odile Brandi{\`e}re and Marie Duflo,
\newblock ``Les algorithmes stochastiques contournent-ils les pi\`eges?,''
\newblock {\em Ann. Inst. H. Poincar\'e Probab. Statist.}, vol. 32, no. 3, pp.
  395--427, 1996.

\bibitem{benaim:cours99}
Michel Benaim,
\newblock ``Dynamics of stochastic approximation algorithms,''
\newblock in {\em S\'eminaire de {P}robabilit\'es, {XXXIII}}, vol. 1709 of {\em
  Lecture Notes in Math.}, pp. 1--68. Springer, Berlin, 1999.

\bibitem{fang-chen:2000}
Hai-Tao Fang and Han-Fu Chen,
\newblock ``Stability and instability of limit points for stochastic
  approximation algorithms,''
\newblock {\em Automatic Control, IEEE Transactions on}, vol. 45, no. 3, pp.
  413 --420, mar 2000.

\bibitem{rabbat:nowak:2004}
M.~Rabbat and R.~Nowak,
\newblock ``{Distributed Optimization in Sensor Networks},''
\newblock in {\em Proceedings of the 3rd international symposium on Information
  processing in sensor networks}. ACM, 2004, pp. 20--27.

\bibitem{bianchi:jakubowicz:VT:2011}
P.~Bianchi and J.~Jakubowicz,
\newblock ``{Distributed Stochastic Approximation for Constrained and
  Unconstrained Optimization},''
\newblock in {\em VALUETOOLS}, 2011.

\bibitem{hall:heyde:1980}
P.~Hall and C.~C. Heyde,
\newblock {\em Martingale Limit Theory and its Application},
\newblock Academic Press, New York, London, 1980.

\bibitem{pol-sze-24}
G.~P{\'o}lya and G.~Szeg{\H{o}},
\newblock {\em Problems and theorems in analysis: Series. Integral calculus.
  Theory of functions},
\newblock Springer (Classics in mathematics), 1998.

\end{thebibliography}

\newpage

\begin{figure}[t]
  \centering
  \includegraphics[width=0.5\linewidth]{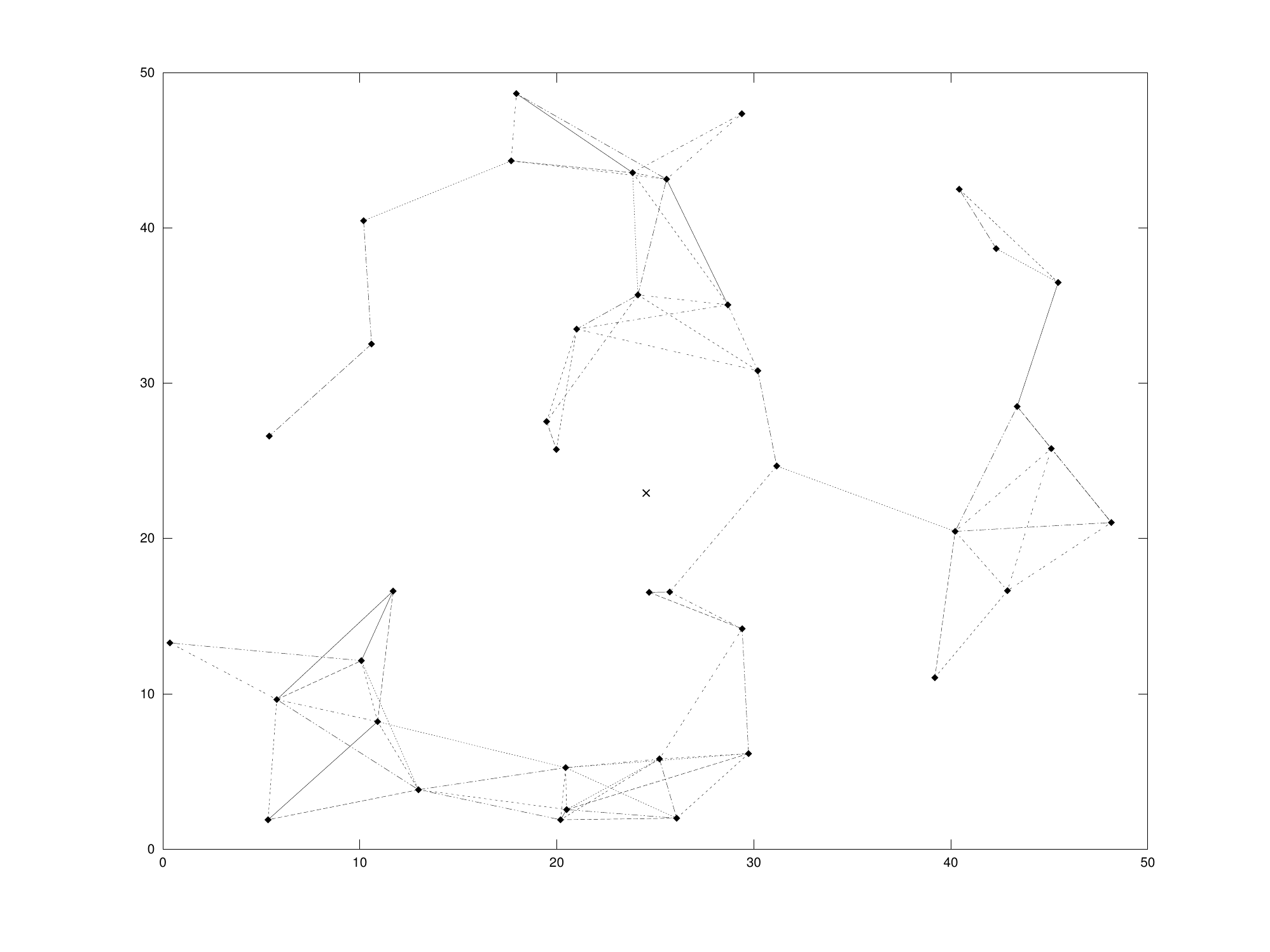}
  \caption{$N=40$ sensors with the graph (line segments) 
     and the source (star)}
  \label{fig:EusipcoNet}
\end{figure}

\begin{figure}[b]
  \centering
  \includegraphics[width=0.5\linewidth]{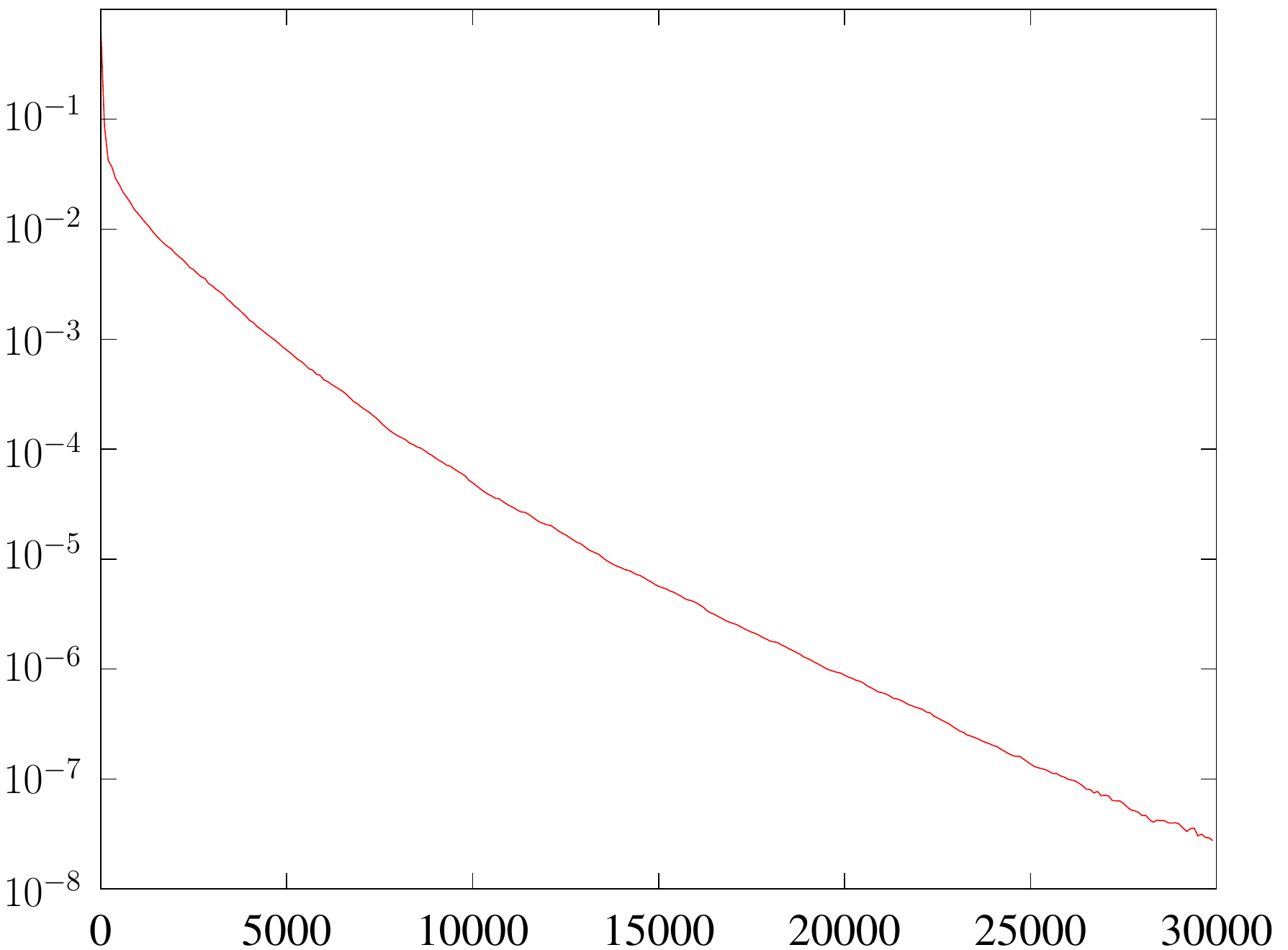}
  \caption{Square error per node $(1/N)\sum_i|\theta_{n,i}-\theta_\star|^2$ as a function of the number of iterations.}
  \label{fig:EusipcoAverageError}
\end{figure}

\newpage
\begin{figure}[h]
  \centering
  \includegraphics[width=0.5\linewidth]{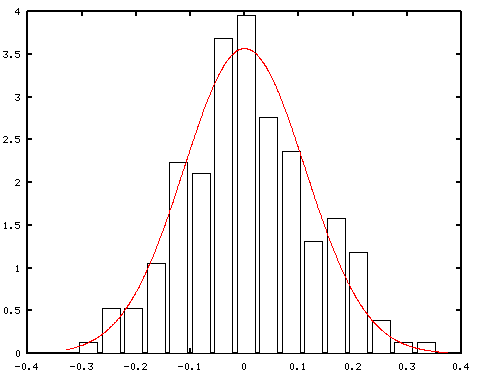}
  \caption{Empirical distribution of real part of the normalized estimation error
$\gamma_n^{-1/2}(\athn-\theta_\star)$ for $n=50\,000$ (bars) \emph{versus} asymptotic distribution given by Theorem~\ref{th:clt:drm} (solid line)}
  \label{fig:histog}
\end{figure}

\clearpage
\small

{\bf Pascal Bianchi} was born in 1977 in Nancy, France. He received the M.Sc. degree of the University of Paris XI and Supélec in 2000 and the Ph.D. degree of the University of Marne-la-Vallée in 2003. From 2003 to 2009, he was an Associate Professor at the Telecommunication Department of Supélec. In 2009, he joined the Statistics and Applications group at LTCI-Telecom ParisTech. His current research interests are in the area of distributed algorithms for multi-agent networks. They include distributed optimization, stochastic approximation, decentralized detection, quantization. 
\bigskip

{\bf Gersende Fort} was born in France, in 1974.  She received the Engineering
degree in Telecommunications from Ecole Nationale Supérieure des
Télécommunications (ENST), Paris, France, in 1997; the Masters degree from
Universit\'e Paris IV, France, in 1997; the PhD degree in Applied Mathematics
from Universit\'e Paris IV in 2001; and the Habilitation \`a Diriger les
Recherches from Universit\'e Dauphine-Paris IX
in 2010. \\
She joined the Centre national de la Recherche Scientifique (CNRS) in 2001 and
she is now a senior research scientist at Laboratoire Traitement et
Communication de l'Information (LTCI).  Her research interests are on Bayesian inverse problems and Monte Carlo methods. \\
She served as an associate editor for Bernoulli Journal since 2013.

\bigskip

{\bf Walid Hachem} was born in Bhamdoun, Lebanon, in 1967.  He received the
Engineering degree in telecommunications from St Joseph University (ESIB),
Beirut, Lebanon, in 1989, the Masters degree from Telecom ParisTech,  
France, in 1990, the PhD degree in signal
processing from Universit\'e Paris Est Marne-La-Vall\'ee in 2000 and
he Habilitation \`a Diriger des Recherches from Universit\'e Paris-Sud
in 2006. \\
After working in the telecommunications industry for ten years, 
he joined the academia in 2001 as a faculty member 
at Sup\'elec, France. In 2006, he joined the CNRS 
(Centre national de la Recherche Scientifique), where he is now a research
director at Telecom ParisTech. 
His research themes consist mainly in the random matrix theory and
its applications in signal processing, 
and in the decentralized estimation and optimization algorithms. \\ 
He served as an associate editor for the IEEE Transactions on Signal 
Processing between 2007 and 2010.

\end{document}